\def\thesection{\arabic{section}}
\def\theequation{\thesection.\arabic{equation}}
\newcommand{\noi} {\noindent}
\newcommand{\Tail} {\mathrm{Tail}}
\newcommand{\loc} {\mathrm{loc}}
\markboth{\small } {\small Doubly nonlinear nonlocal $p$-Laplace equation}
\def\theequation{\@arabic{\c@section}.\@arabic{\c@equation}}
\newtheorem{Theorem}{Theorem}[section]
\newtheorem{Lemma}[Theorem]{Lemma}
\newtheorem{Remark}[Theorem]{Remark}
\newtheorem{Definition}[Theorem]{Definition}
\begin{document}

{\vspace{0.01in}}

\title
{ Some local properties of subsolution and supersolutions for a doubly nonlinear nonlocal $p$-Laplace equation}

\title
{ \sc Some local properties of subsolutons and supersolutions for a doubly nonlinear nonlocal  parabolic $p$-Laplace equation}
\author{Agnid Banerjee, Prashanta Garain and Juha Kinnunen}

\maketitle

\begin{abstract}
{We establish a local boundedness estimate for weak subsolutions to a doubly nonlinear parabolic fractional $p$-Laplace equation. 
Our argument relies on energy estimates and a parabolic nonlocal version of De Giorgi's method. 
Furthermore, by means of a new algebraic inequality, we  show that positive weak supersolutions satisfy a reverse H\"older inequality. Finally, we also prove a logarithmic decay estimate for  positive supersolutions. }

\medskip

\noi {Key words: Doubly nonlinear parabolic equation, fractional $p$-Laplace equation,  energy estimates, De Giorgi's method}.

\medskip

\noi \textit{2010 Mathematics Subject Classification:} 35K92, 35B45, 35R11.
\end{abstract}

\section{Introduction}
This work studies the local behaviour of subsolutions and supersolutions to the doubly nonlinear parabolic nonlocal problem
\begin{equation}\label{maineqn}
\partial_t(u^{p-1})+\mathcal{L}u=0\text{ in }\Omega\times(0,T),
\quad p>2,
\end{equation}
where $\Omega\subset\mathbb{R}^n$ is a bounded smooth domain, $T>0$ and the operator $\mathcal{L}$ is defined by
$$
\mathcal{L}u(x,t)=\text{P.V.}\int_{\mathbb{R}^n}|u(x,t)-u(y,t)|^{p-2}(u(x,t)-u(y,t))K(x,y,t)\,dy,
$$
and where P.V. stands for  the principal value. We assume that $K$ is a symmetric kernel with respect to $x$ and $y$ satisfying 
\begin{equation}\label{kernel}
\frac{\Lambda^{-1}}{|x-y|^{n+ps}}\leq K(x,y,t)\leq\frac{\Lambda}{|x-y|^{n+ps}},
\end{equation}
uniformly in $t\in(0,T)$ for some $\Lambda\geq 1$ and $s\in(0,1)$. If $K(x,y,t)=|x-y|^{-(n+ps)}$, then $\mathcal{L}$ becomes the fractional $p$-Laplace operator $(-\Delta)_p^{s}$, 
which further reduces to the fractional Laplacian $(-\Delta)^s$ for $p=2$.

The partial differential equation in \eqref{maineqn} constitutes a nonlocal counterpart of the doubly nonlinear equation,
\begin{equation}\label{pLap}
\partial_t(u^{p-1})-\text{div}(|\nabla u|^{p-2}\nabla u)=0.
\end{equation} 
We refer the reader to \cite{Kinhigherint, Verenacontinuity, Gvespri, Kin-Kuusi, Kuusiholder2, Kuusiholder,  Liao} and the references therein. 
To the best of our knowledge, there is no literature available concerning the corresponding nonlocal equation.
This paper is a first step towards a regularity theory where we  prove a local boundedness estimate for weak subsolutions to  \eqref{maineqn} when $p>2$. 
To this end, we establish an energy estimate (Lemma \ref{Subenergyestimate}) and apply De Giorgi's method to obtain our main result (Theorem \ref{Localbound1}). 
We also prove a reverse H\"older inequality for strictly positive weak supersolutions (Theorem \ref{revholder}) by means of a new algebraic inequality (Lemma \ref{Inequality1}) and a logarithmic decay estimate (Lemma \ref{Logestimatelemma}). In particular, Lemma \ref{Inequality1} generalizes an inequality due to Felsinger and Kassmann for $p=2$, see Lemma 3.3 in \cite{Kassweakharnack}.  Finally we  note that in the local case as for \eqref{pLap}, such a reverse H\"older property as well as the logarithmic estimate  constitute some of the key ingredients in the proof of  weak Harnack inequality, see for instance \cite{Kin-Kuusi}. To the best of our knowledge, weak Harnack inequality seems to be an open question in the nonlocal case for the doubly nonlinear equation \eqref{maineqn} and therefore we believe that  our results will be important in investigating such question along with further qualitative and quantitative properties of weak solutions to \eqref{maineqn}. 

Fractional Laplace equations have been a topic of considerable attention recently. We refer to the survey \cite{Hitchhiker'sguide} by Di Nezza, Palatucci and Valdinoci for an elementary introduction to the theory of the fractional Sobolev spaces and fractional Laplace equations.
For globally nonnegative solutions of the elliptic fractional Laplace equation $(-\Delta)^s u=0$, Landkof \cite{Nonlocalharnackfirst} obtained scale invariant Harnack inequality, which fails for sign changing solutions as shown by Kassman \cite{KassmanHarnack}. Indeed, an additional tail term appears in the Harnack estimate. Castro, Kuusi and Palatucci studied local boundedness and H$\ddot{\text{o}}$lder continuity results for the equation $(-\Delta_p)^s u=0$ with $p>1$ in \cite{Kuusilocal}.
They also obtained Harnack inequality with a tail dealing with sign changing solutions in \cite{Kuusiharnack}. The nonhomogeneous case $(-\Delta_p)^s u=f$ has been settled for local and global boundedness along with a discussion of eigenvalue problem by Brasco and Parini \cite{Braspar}. Moreover in this case, Brasco, Lindgren and Schikorra established higher and optimal regularity results in \cite{Braslinsck}. See also \cite{Brascolind, Cozzi} and the references therein.    

In the parabolic setting, for the fractional heat equation, 
$
\partial_t u+(-\Delta)^s u=0,
$
weak Harnack inequality has been established by Felsinger and Kassman in \cite{Kassweakharnack}, see also \cite{Kassmanchaker, KassmanSch} for related results. Caffarelli, Chan and Vasseur established boundedness and H$\ddot{\text{o}}$lder continuity results in \cite{Cafchanvas} for different type of kernels. For regularity results up to the boundary, see  \cite{Xavier}. Bonforte, Sire and V\'{a}zquez established optimal existence and uniqueness results in \cite{Vazquez}, along with a scale invariant Harnack inequality for globally positive solutions. For sign changing solutions, Str\"{o}mqvist proved Harnack inequality with a tail in \cite{Martinharnack}, see \cite{Kim} for a different approach. 

In the nonlinear framework, we mention the work of V\'{a}zquez \cite{Vazquez1} where global boundedness results for the equation 
$$
\partial_t u+(-\Delta_p)^s u=0
$$
have been obtained. See also \cite{Rossi}. For such an equation, local boundedness result with a tail term has been  investigated by Str\"{o}mqvist in \cite{Martinlocal}. More recently, H$\ddot{\text{o}}$lder continuity results has been established for the same equation by Brasco, Lindgren and Str\"{o}mqvist in \cite{Martincont}. 
In the doubly nonlinear case, Hynd and Lindgren \cite{LinHynd} addressed the question of pointwise behavior of viscosity solutions for the following doubly nonlinear equation
\[
|\partial_t u|^{p-2}\partial_t u+(-\Delta)_{p}^s u=0.
\]
See also \cite{LinHynd1, LinHynd2} for related results in the local case. 

This paper is organized as follows: In Section 2, we introduce some basic notations, gather some preliminary results that are relevant to our work and then state our main results. In Section 3-5, we prove our main results. Finally, in Section 6, the appendix, we give a proof of the algebraic inequality in Lemma \ref{Inequality1} which is applied in the proof of Theorem \ref{revholder}. 
\section{Preliminaries and main results}
We  first present some facts about fractional Sobolev spaces. For more details we refer the reader to \cite{Hitchhiker'sguide}.
\begin{Definition}
Let $1<p<\infty$ and $0<s<1$ and assume that $\Omega\subset\mathbb{R}^n$ is an open and connected subset of $\mathbb R^n$. 
The fractional Sobolev space $W^{s,p}(\Omega)$ is defined by
$$
W^{s,p}(\Omega)=\Big\{u\in L^p(\Omega):\frac{|u(x)-u(y)|}{|x-y|^{\frac{n}{p}+s}}\in L^p(\Omega\times \Omega)\Big\}
$$
and endowed with the norm
$$
\|u\|_{W^{s,p}(\Omega)}=\Big(\int_{\Omega}|u(x)|^p\,dx+\int_{\Omega}\int_{\Omega}\frac{|u(x)-u(y)|^p}{|x-y|^{n+sp}}\,dx\,dy\Big)^\frac{1}{p}.
$$
{The fractional Sobolev space with zero boundary values is defined by}
$$
W_{0}^{s,p}(\Omega)={\big\{u\in W^{s,p}(\mathbb{R}^n):u=0\text{ on }\mathbb{R}^n\setminus\Omega\big\}}.
$$
\end{Definition}

Both $W^{s,p}(\Omega)$ and $W_{0}^{s,p}(\Omega)$ are reflexive Banach spaces, see \cite{Hitchhiker'sguide}. The parabolic Sobolev space $L^p(0,T;W^{s,p}(\Omega))$ is the set of measurable functions $u$ on $\Omega\times(0,T)$, $T>0$, such that 
$$
||u||_{L^p(0,T;W^{s,p}(\Omega))}=\Big(\int_{0}^{T} ||u(\cdot,t)||^p_{W^{s,p}(\Omega)}\,dt\Big)^\frac{1}{p}<\infty.
$$
The spaces $W^{s,p}_{\loc}(\Omega)$ and $L^p_{\loc}(0,T;W^{s,p}_{\loc}(\Omega))$ are defined analogously. Next we discuss Sobolev embedding theorems, see \cite{Hitchhiker'sguide}.
We write by $C$ to denote a positive constant which may vary from line to line or even in the same line depending on the situation. If $C$ depends on $r_1,r_2,\dots,r_k$, we write $C=C(r_1,r_2,\dots,r_k)$.

\begin{Theorem}\label{ellipticembedding1}
Let $1<p<\infty$ and $0<s<1$ with $sp<n$ and $\kappa^{*}=\frac{n}{n-sp}$. For every $u\in W^{s,p}(\mathbb{R}^n)$, we have
$$
\|u\|^p_{L^{\kappa^{*} p}(\mathbb{R}^n)}\leq\int_{\mathbb{R}^n}\int_{\mathbb{R}^n}\frac{|u(x)-u(y)|^p}{|x-y|^{n+sp}}\,dx\,dy.
$$
If $\Omega$ is  a bounded  extension domain for $W^{s,p}$ and $u\in W^{s,p}(\Omega)$, then for any $\kappa \in [1, \kappa^*]$, 
$$
\|u\|_{L^{\kappa p}(\Omega)}\leq C(\Omega)||u||_{W^{s,p}(\Omega)}.
$$
If $sp=n,$ then the above inequalities hold for any $\kappa \in[1,\infty)$. 
For $sp>n$, the second inequality holds for any $\kappa \in[1,\infty]$. 
\end{Theorem}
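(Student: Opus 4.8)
\emph{Idea of proof.} The substance of the statement is the first, global inequality on $\mathbb{R}^n$ in the subcritical range $sp<n$; the inequalities on a bounded $\Omega$ then follow by soft arguments. Writing $[v]^p_{W^{s,p}(\mathbb R^n)}=\iint_{\mathbb R^n\times\mathbb R^n}\frac{|v(x)-v(y)|^p}{|x-y|^{n+sp}}\,dx\,dy$, the plan is to first prove $\|u\|^p_{L^{\kappa^{*}p}(\mathbb R^n)}\le C(n,p,s)\,[u]^p_{W^{s,p}(\mathbb R^n)}$ for all $u\in W^{s,p}(\mathbb R^n)$, with $\kappa^{*}=\frac{n}{n-sp}$ (a constant $C(n,p,s)$ on the right is all that is used later in the paper). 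Since $C_c^\infty(\mathbb R^n)$ is dense in $W^{s,p}(\mathbb R^n)$ and this inequality passes to a.e.\ limits by Fatou's lemma, it suffices to treat $u\in C_c^\infty(\mathbb R^n)$; passing to $|u|$, which does not increase the right-hand side, one may further assume $u\ge 0$ (the mild loss of smoothness being harmless below).

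For such $u$ I would run the dyadic truncation scheme. Put $A_k=\{u>2^k\}$ and $a_k=|A_k|$, $k\in\mathbb Z$; these are finite, decreasing in $k$, and vanish for large $k$. A dyadic decomposition of $\int u^{\kappa^{*}p}$ together with a summation by parts gives $\int u^{\kappa^{*}p}\le C\sum_k 2^{k\kappa^{*}p}a_k$. For each $k$, test $[\cdot]_{W^{s,p}(\mathbb R^n)}$ against the truncation $w_k=\min\{(u-2^k)_+,2^k\}$, which equals $2^k$ on $A_{k+1}$ and vanishes outside $A_k$; bounding $\int_{\mathbb R^n\setminus A_k}|x-y|^{-n-sp}\,dy$ from below by comparison with a ball of volume $2a_k$ centred at a point of $A_{k+1}$ yields $[w_k]^p_{W^{s,p}(\mathbb R^n)}\ge c(n,p,s)\,2^{kp}\,a_{k+1}\,a_k^{-sp/n}$. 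Since each $w_k$ is a nondecreasing function of the value of $u$ and $\sum_k w_k=u$ pointwise, one has $\sum_k|w_k(x)-w_k(y)|^p\le|u(x)-u(y)|^p$ (using $\sum t_k^p\le(\sum t_k)^p$ for $t_k\ge0$, $p\ge1$), hence $\sum_k[w_k]^p_{W^{s,p}(\mathbb R^n)}\le[u]^p_{W^{s,p}(\mathbb R^n)}$. The last step, and the one genuinely delicate point, is the purely combinatorial inequality $\sum_k 2^{k\kappa^{*}p}a_k\le C\big(\sum_k 2^{kp}a_{k+1}a_k^{-sp/n}\big)^{\kappa^{*}}$ for nonnegative, eventually vanishing, nonincreasing sequences, which is the discrete counterpart of $\|f\|_{p^{*}}\lesssim\|\nabla f\|_p$; I would isolate it as a separate lemma and prove it by a dichotomy on the rate of geometric decay of $(a_k)$, treating the levels of ``slow'' and of ``fast'' decay separately. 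Combining the three displays gives the global inequality.

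With it available, the remaining assertions are routine. If $\Omega$ is a bounded $W^{s,p}$-extension domain, extend $u\in W^{s,p}(\Omega)$ to $Eu\in W^{s,p}(\mathbb R^n)$ with $\|Eu\|_{W^{s,p}(\mathbb R^n)}\le C(\Omega)\|u\|_{W^{s,p}(\Omega)}$, so that $\|u\|_{L^{\kappa^{*}p}(\Omega)}\le\|Eu\|_{L^{\kappa^{*}p}(\mathbb R^n)}\le C\,[Eu]_{W^{s,p}(\mathbb R^n)}\le C(\Omega)\|u\|_{W^{s,p}(\Omega)}$; for $\kappa\in[1,\kappa^{*}]$, Hölder's inequality on the finite-measure set $\Omega$ upgrades this to $\|u\|_{L^{\kappa p}(\Omega)}\le|\Omega|^{\frac1{\kappa p}-\frac1{\kappa^{*}p}}\|u\|_{L^{\kappa^{*}p}(\Omega)}$. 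When $sp=n$, given any finite $\kappa$ I would pick $\tilde s\in(0,s)$ with $\tilde s p<n$ and $\frac{n}{n-\tilde s p}\ge\kappa$, use the elementary embedding $W^{s,p}(\mathbb R^n)\hookrightarrow W^{\tilde s,p}(\mathbb R^n)$ (split the Gagliardo integral of $Eu$ at $|x-y|=1$, estimating the near part by the $s$-seminorm and the far part by $C\|Eu\|_{L^p(\mathbb R^n)}^p$), and then invoke the subcritical case already proved with $\tilde s$ in place of $s$. When $sp>n$ I would prove the Morrey-type bound $\|u\|_{L^\infty(\Omega)}\le C(\Omega)\|u\|_{W^{s,p}(\Omega)}$ by telescoping the averages $u_{B_j}$ of $u$ over a dyadic chain of shrinking balls $B_j=B(x,2^{-j}r)$: the estimate $|u_{B_{j+1}}-u_{B_j}|\le C(n,p,s)\,(2^{-j}r)^{s-n/p}[u]_{W^{s,p}(B_j)}$ sums because $s-\frac np>0$, whence $u$ (after extension) is bounded and Hölder continuous on $\mathbb R^n$, and then $L^\infty(\Omega)\hookrightarrow L^\kappa(\Omega)$ trivially since $|\Omega|<\infty$. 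The combinatorial summation lemma underpinning the sharp exponent $\kappa^{*}p$ is the only step I expect to require real work; everything else is bookkeeping with extension operators, Hölder's inequality and elementary averaging.
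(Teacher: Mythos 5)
The paper does not prove this theorem; it is quoted as a known fact with the reference \cite{Hitchhiker'sguide} (Di Nezza, Palatucci, Valdinoci). Your dyadic level-set and truncation scheme---the sets $A_k=\{u>2^k\}$, the truncations $w_k=\min\{(u-2^k)_+,2^k\}$, the geometric lower bound $\int_{\mathbb R^n\setminus E}|x-y|^{-n-sp}\,dy\gtrsim|E|^{-sp/n}$, the superadditivity $\sum_k[w_k]^p_{W^{s,p}}\le[u]^p_{W^{s,p}}$, and the discrete summation lemma relating $\sum 2^{k\kappa^*p}a_k$ to $\sum 2^{kp}a_{k+1}a_k^{-sp/n}$---is exactly the argument carried out in Section~6 of that survey, and your remark that the first displayed inequality as printed in the paper is missing a constant $C(n,p,s)$ on the right is correct.
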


For the following Sobolev type inequality, we refer to \cite[Lemma 2.1]{Martinlocal}.
For $x_0\in\mathbb{R}^n$ and $r>0$, $B_r(x_0)=\{x\in\mathbb R^n:|x-x_0|<r\}$ denotes the ball in $\mathbb{R}^n$ of radius $r$ and center $x_0$. 
The barred integral sign denotes the corresponding integral average.

\begin{Lemma}\label{ellipticembedding2}
{Let $1<p<\infty$ and $0<s<1$.}
Assume that $u\in W^{s,p}(B_r)$, where $B_r=B_r(x_0)$, and let 
$\kappa^{*}=\frac{n}{n-sp}$, if $sp<n$, and $\kappa^{*}=2$, if $sp\geq n$.
There exists a constant $C=C(n,p,s)$ such that for every $\kappa\in[1,\kappa ^{*}]$, we have
$$
\Big(\fint_{B_r}|u(x)|^{\kappa p}\,dx\Big)^\frac{1}{\kappa}
\leq Cr^{sp-n}\int_{B_r}\int_{B_r}\frac{|u(x)-u(y)|^p}{|x-y|^{n+sp}}\,dx\,dy+C\fint_{B_r}|u(x)|^p\,dx.
$$
\end{Lemma}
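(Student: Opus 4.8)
The plan is to reduce to the unit ball by a scaling argument and then invoke the Sobolev embedding of Theorem \ref{ellipticembedding1}. Given $u\in W^{s,p}(B_r(x_0))$, set $v(x)=u(x_0+rx)$ for $x\in B_1=B_1(0)$. A change of variables gives $\fint_{B_1}|v|^{q}\,dx=\fint_{B_r}|u|^{q}\,dx$ for every $q\ge 1$, and, writing $x=x_0+r\tilde x$, $y=x_0+r\tilde y$ and using $|x-y|^{n+sp}=r^{n+sp}|\tilde x-\tilde y|^{n+sp}$ together with $dx\,dy=r^{2n}\,d\tilde x\,d\tilde y$,
\[
\int_{B_1}\int_{B_1}\frac{|v(\tilde x)-v(\tilde y)|^p}{|\tilde x-\tilde y|^{n+sp}}\,d\tilde x\,d\tilde y
= r^{sp-n}\int_{B_r}\int_{B_r}\frac{|u(x)-u(y)|^p}{|x-y|^{n+sp}}\,dx\,dy .
\]
Consequently the asserted estimate on $B_r$ is equivalent to the same estimate for $v$ on $B_1$, so it suffices to prove
\[
\Big(\fint_{B_1}|v|^{\kappa p}\,dx\Big)^{1/\kappa}
\le C\int_{B_1}\int_{B_1}\frac{|v(x)-v(y)|^p}{|x-y|^{n+sp}}\,dx\,dy+C\fint_{B_1}|v|^p\,dx,
\quad C=C(n,p,s).
\]

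Since $B_1$ is a bounded extension domain for $W^{s,p}$, Theorem \ref{ellipticembedding1} applies with $\Omega=B_1$: for $\kappa\in[1,n/(n-sp)]$ when $sp<n$, and, as the embedding then holds for all finite exponents, in particular for $\kappa\in[1,2]$ when $sp\ge n$, one has $\|v\|_{L^{\kappa p}(B_1)}\le C(n,p,s)\,\|v\|_{W^{s,p}(B_1)}$. Raising to the power $p$, using $\|v\|_{W^{s,p}(B_1)}^p=\|v\|_{L^p(B_1)}^p+[v]_{W^{s,p}(B_1)}^p$ and absorbing the fixed constant $|B_1|$, this is exactly the displayed inequality for $v$; scaling back via the identities above then yields the lemma.

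There is no real obstacle here beyond the bookkeeping of the scaling exponents; the only conceptual point is that the zeroth order term $C\fint_{B_r}|u|^p$ on the right cannot be dispensed with, since a nonzero constant has vanishing Gagliardo seminorm but nonzero $L^{\kappa p}$ norm, and it is precisely the $\|v\|_{L^p(B_1)}^p$ summand in $\|v\|_{W^{s,p}(B_1)}^p$ that accounts for it. Should one wish to avoid quoting the extension-domain embedding, an alternative is to apply the global inequality in Theorem \ref{ellipticembedding1} to $\eta v$ for a fixed cutoff $\eta$ with $\eta\equiv 1$ on $B_{1/2}$, which introduces the commutator term $\int\int|\eta(x)-\eta(y)|^p|v(x)|^p|x-y|^{-n-sp}\,dx\,dy$ and requires an extra covering argument to pass from $B_{1/2}$ back to $B_1$; the extension-domain route is cleaner.
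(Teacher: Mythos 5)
Your proposal is correct. The paper does not give its own proof of this lemma; it simply cites \cite[Lemma 2.1]{Martinlocal}, so there is no argument in the text to compare against. That said, the scaling reduction you carry out is exactly right: with $v(x)=u(x_0+rx)$ one has $\fint_{B_1}|v|^q=\fint_{B_r}|u|^q$ and $[v]_{W^{s,p}(B_1)}^p = r^{sp-n}[u]_{W^{s,p}(B_r)}^p$, so the inequality on $B_r$ with the prefactor $r^{sp-n}$ is equivalent to the $r=1$ case, and the $r=1$ case follows from the extension-domain embedding of Theorem \ref{ellipticembedding1} after passing from integrals to averages (which only changes $C$ by powers of $|B_1|$). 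The handling of $sp\geq n$ is also fine: $\kappa^*=2$ is just a conventional finite cap, and Theorem \ref{ellipticembedding1} gives the embedding for all $\kappa\in[1,\infty)$ (resp.\ $[1,\infty]$) there, so in particular for $\kappa\le 2$. Two small remarks worth keeping in mind but not errors: (i) you use that balls are $W^{s,p}$-extension domains, which is standard for Lipschitz domains but should be cited if this were written for publication; (ii) your alternative cutoff-plus-covering route at the end is indeed messier and unnecessary here, since the extension-domain hypothesis in Theorem \ref{ellipticembedding1} already does the work.
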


Next we state the  parabolic Sobolev inequality as in \cite[Lemma 2.2]{Martinlocal}.

\begin{Lemma}\label{parabolic embedding}
{Let $p$, $s$ and} $\kappa^{*}$ be as in Lemma \ref{ellipticembedding2}. Assume that $u\in L^p(t_1,t_2;W^{s,p}(B_r))$. There exists a constant $C=C(n,p,s)$ such that for every $\kappa\in[1,\kappa^{*}]$, we have
\begin{gather*}
\int_{t_1}^{t_2}\fint_{B_r}|u(x,t)|^{\kappa p}\,dx\,dt
\leq C\Bigr(r^{sp-n}\int_{{t_1}}^{{t_2}}\int_{B_r}\int_{B_r}\frac{|u(x)-u(y)|^p}{|x-y|^{n+sp}}\,dx\,dy\,dt+\int_{t_1}^{t_2}\fint_{B_r}|u(x,t)|^p\,dx\,dt\Bigl)\\
\cdot\Bigl(\sup_{t_1<t<t_2}\fint_{B_r}|u(x,t)|^\frac{p\kappa^{*}(\kappa-1)}{\kappa^{*}-1}\,dx\Bigr)^\frac{\kappa^{*}-1}{\kappa^{*}}.
\end{gather*}
\end{Lemma}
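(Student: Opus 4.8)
The plan is to combine H\"older's inequality in the spatial variable with the fractional Sobolev inequality of Lemma~\ref{ellipticembedding2} applied slicewise in time, and then to pull out a supremum in the time variable. Since $u\in L^p(t_1,t_2;W^{s,p}(B_r))$, we have $u(\cdot,t)\in W^{s,p}(B_r)$ for almost every $t\in(t_1,t_2)$, so all of the slicewise estimates below are legitimate for almost every such $t$.

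First I would fix such a $t$ and split $|u(\cdot,t)|^{\kappa p}=|u(\cdot,t)|^{p}\,|u(\cdot,t)|^{p(\kappa-1)}$. Applying H\"older's inequality over $B_r$ with the conjugate exponents $\kappa^{*}$ and $\kappa^{*}/(\kappa^{*}-1)$ — which is possible since $\kappa^{*}>1$ both when $sp<n$ and when $sp\geq n$ — gives
\[
\fint_{B_r}|u(x,t)|^{\kappa p}\,dx
\leq\Big(\fint_{B_r}|u(x,t)|^{\kappa^{*} p}\,dx\Big)^{\frac1{\kappa^{*}}}
\Big(\fint_{B_r}|u(x,t)|^{\frac{p\kappa^{*}(\kappa-1)}{\kappa^{*}-1}}\,dx\Big)^{\frac{\kappa^{*}-1}{\kappa^{*}}}.
\]
The first factor on the right is exactly the left-hand side of Lemma~\ref{ellipticembedding2} with the parameter there taken to be $\kappa^{*}$, hence it is bounded by $Cr^{sp-n}\int_{B_r}\int_{B_r}|u(x,t)-u(y,t)|^p|x-y|^{-n-sp}\,dx\,dy+C\fint_{B_r}|u(x,t)|^p\,dx$ with $C=C(n,p,s)$. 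In the second factor I would simply estimate $\fint_{B_r}|u(x,t)|^{\frac{p\kappa^{*}(\kappa-1)}{\kappa^{*}-1}}\,dx$ by $\sup_{t_1<\tau<t_2}\fint_{B_r}|u(x,\tau)|^{\frac{p\kappa^{*}(\kappa-1)}{\kappa^{*}-1}}\,dx$, which is independent of $t$.

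Finally, I would multiply these two bounds, integrate over $t\in(t_1,t_2)$, and pull the (now $t$-independent) supremum factor out of the integral; this produces precisely the asserted inequality, the constant $C=C(n,p,s)$ being the one inherited from Lemma~\ref{ellipticembedding2}. The endpoint $\kappa=1$ is trivial, since then the right-most factor equals $1$ and the estimate reduces to an obvious one. I do not expect a genuine obstacle here: this is the standard parabolic interpolation scheme and it is recorded as Lemma~2.2 in \cite{Martinlocal}. The only points that require a little care are verifying that the H\"older exponents in the spatial splitting are admissible and collapse exactly to the exponents $\frac{p\kappa^{*}(\kappa-1)}{\kappa^{*}-1}$ and $\frac{\kappa^{*}-1}{\kappa^{*}}$ displayed in the statement, and checking that the whole argument runs uniformly across the two regimes $sp<n$ and $sp\geq n$ that are encoded in the definition of $\kappa^{*}$.
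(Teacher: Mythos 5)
Your argument is correct and is exactly the standard parabolic interpolation proof: H\"older in space with exponents $\kappa^{*}$ and $\kappa^{*}/(\kappa^{*}-1)$, then the slicewise fractional Sobolev inequality of Lemma~\ref{ellipticembedding2} at parameter $\kappa^{*}$, then extraction of the time supremum. The paper itself does not reprove this but cites it as Lemma~2.2 of \cite{Martinlocal}, where the same scheme is used, so there is no discrepancy to report.
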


We now state the following weighted Poincar\'{e} inequality in fractional Sobolev spaces, see \cite[Corollary 6]{Kassmaninequality}.

\begin{Lemma}\label{wgtPoincareunitball}
Let $1<p<\infty$, $0<s_0\leq s<1$. 
Assume that $\phi(x)=\Phi(|x|)$ is a radially decreasing function on $B_1=B_1(0)$. 
Then there exists a constant $C=C(p,n,s_0,\mathcal{\phi})$ such that for all $f\in L^p(B_1)$,
$$
\int_{B_1}|f(x)-f^{\phi}_{B_1}|^p \phi(x)\,dx\leq C(1-s)\int_{B_1}\int_{B_1}\frac{|f(x)-f(y)|^p}{|x-y|^{n+ps}}\min\{\phi(x),\phi(y)\}\,dx\,dy,
$$
where
$$
f_{B_1}^{\phi}=\frac{\int_{B_1}f(x)\phi(x)\,dx}{\int_{B_1}\phi(x)\,dx}.
$$ 
\end{Lemma}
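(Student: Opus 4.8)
The plan is to reduce the weighted inequality to the sharp \emph{unweighted} fractional Poincar\'e inequality on balls, the latter applied on a Whitney decomposition of $B_1$, and then to reassemble the local pieces by a chaining argument in which the radial monotonicity of $\phi$ keeps the weights under control; the factor $(1-s)$ is produced entirely at the local step. As a warm-up that fixes ideas: writing $g=f-f^{\phi}_{B_1}$ and applying Jensen's inequality with the probability measure $\phi(y)\,dy/\|\phi\|_{L^1(B_1)}$ to $g(x)=\|\phi\|_{L^1(B_1)}^{-1}\int_{B_1}(f(x)-f(y))\phi(y)\,dy$ gives
$$
\int_{B_1}|f-f^{\phi}_{B_1}|^p\,\phi\,dx\le\frac{1}{\|\phi\|_{L^1(B_1)}}\int_{B_1}\int_{B_1}|f(x)-f(y)|^p\phi(x)\phi(y)\,dx\,dy ,
$$
and when $\phi$ is bounded one has $\phi(x)\phi(y)\le 2^{n+p}\|\phi\|_{L^\infty(B_1)}\,|x-y|^{-(n+ps)}\min\{\phi(x),\phi(y)\}$ on $B_1\times B_1$, so the asserted inequality \emph{without} the factor $(1-s)$ follows. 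Thus the real content is the gain as $s\to1^-$, and capturing it forces genuine use of the singular kernel near the diagonal.

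\textbf{Local building block.} First I would record the sharp fractional Poincar\'e inequality on balls: there is $C=C(n,p,s_0)$ such that for every ball $B_\rho\subset\mathbb{R}^n$ and every $s_0\le s<1$,
$$
\int_{B_\rho}\Big|f-\fint_{B_\rho}f\Big|^p\,dx\le C(1-s)\,\rho^{sp}\int_{B_\rho}\int_{B_\rho}\frac{|f(x)-f(y)|^p}{|x-y|^{n+ps}}\,dx\,dy .
$$
By scaling it suffices to treat $\rho=1$, and this is where $(1-s)$ appears: were the inequality to fail uniformly in $s\in[s_0,1)$, one could produce $f_j$ with $\fint_{B_1}f_j=0$, $\|f_j\|_{L^p(B_1)}=1$, and $(1-s_j)\int_{B_1}\int_{B_1}|f_j(x)-f_j(y)|^p|x-y|^{-(n+ps_j)}\,dx\,dy\to0$, and a Bourgain--Brezis--Mironescu-type compactness argument --- which is uniform over $s\in[s_0,1)$ precisely because of the $(1-s)$ normalization --- would extract a subsequence converging in $L^p(B_1)$ to a constant, against the normalization of $f_j$.

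\textbf{Globalization with the weight.} Take a Whitney family $\{Q_i\}$ of balls covering $B_1$, with $\mathrm{diam}\,Q_i\approx\mathrm{dist}(Q_i,\partial B_1)$ and bounded overlap, and let $\phi_i$ be a value of $\phi$ comparable to $\phi$ throughout $2Q_i$, so that $\min\{\phi(x),\phi(y)\}\gtrsim\phi_i$ on $2Q_i\times2Q_i$. Applying the local inequality on each $2Q_i$, multiplying by $\phi_i$ and summing over $i$ bounds $\sum_i\phi_i\int_{Q_i}|f-f_{Q_i}|^p\,dx$ by $C(1-s)$ times the right-hand side of the asserted inequality, the bounded overlap of the $2Q_i$ controlling the resulting sum of local double integrals. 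It remains to pass from the local averages $f_{Q_i}$ to the single constant $f^{\phi}_{B_1}$: since $B_1$ is a John domain, each $Q_i$ is joined to a fixed central ball $Q_0$ by a chain of consecutively overlapping Whitney balls of controlled length whose radii increase toward $Q_0$, and along such a chain $\phi$ --- being radially nonincreasing --- is comparable from below to $\phi_i$. The standard Boman chaining estimate --- telescoping $|f_{Q_i}-f_{Q_0}|^p$ along the chain, bounding each increment by a local oscillation on a merged ball, and using that the ``shadows'' of the $Q_i$ have bounded overlap --- then yields $\int_{B_1}|f-f^{\phi}_{B_1}|^p\phi\le C\sum_i\phi_i\int_{Q_i}|f-f_{Q_i}|^p\,dx$, and combining the two estimates completes the proof.

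\textbf{Where the difficulty lies.} The delicate part is this reassembly. One must run the chaining so that, on the one hand, exactly one factor $(1-s)$ survives --- which it does because every loss incurred is purely geometric (Whitney and John constants, and comparisons of finitely many $\phi_i$ along chains) and hence $s$-free --- and, on the other hand, the weighted chain sums converge. The latter is exactly where the radial monotonicity of $\Phi$, hence the dependence $C=C(p,n,s_0,\phi)$, genuinely enters: it controls how badly the comparison ``$\phi$ is essentially constant on $2Q_i$'' can degrade, both when $\Phi$ decays rapidly toward $\partial B_1$ and, in the same vein, when $\Phi$ is unbounded at the origin (where one subdivides the central ball analogously). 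This is precisely the regime in which the naive argument of the first paragraph cannot be pushed through and the Whitney/chain machinery becomes indispensable.
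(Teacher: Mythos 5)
The paper does not prove this lemma: it states it with a direct citation to Dyda--Kassmann, \cite[Corollary 6]{Kassmaninequality}. So there is no proof in the paper to compare your strategy against, and the assessment must be of your argument on its own.

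Your plan (sharp $(1-s)$--gain local fractional Poincar\'e on balls, Whitney decomposition, Boman chaining back to a central cube, monotonicity of $\Phi$ to control weight ratios along chains) is natural and the local ingredient is genuine, but the globalization step has a gap that is not cosmetic. You introduce ``a value $\phi_i$ of $\phi$ comparable to $\phi$ throughout $2Q_i$.'' Unwinding what the argument actually needs from this, two inequalities must hold simultaneously: $\sup_{Q_i}\phi\lesssim\phi_i$ (so that the weight $\phi$ on the left can be pulled out as the constant $\phi_i$ before applying the \emph{unweighted} local Poincar\'e), and $\phi_i\lesssim\inf_{2Q_i}\phi$ (so that $\phi_i$ on the right can be absorbed under $\min\{\phi(x),\phi(y)\}$ on $2Q_i\times 2Q_i$). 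Together these require $\sup_{Q_i}\phi/\inf_{2Q_i}\phi$ to be bounded uniformly in $i$. For a general radially decreasing $\phi$ this quantity is infinite. Take $\Phi(r)=\exp\!\bigl(-1/(1-r)\bigr)$: on a Whitney cube $Q_i$ at distance $d$ from $\partial B_1$ the radius $|x|$ sweeps an interval of length $\approx d$ near $1-d$, and across that interval $\Phi$ drops by a factor $\approx e^{1/(2d)}$, which is unbounded as $d\to 0$. Yet $\Phi\in L^1(B_1)$ and the lemma applies to it with a finite constant $C(p,n,s_0,\phi)$. So for this $\phi$ your argument produces $C=\infty$, not merely a $\phi$-dependent constant. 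You anticipate this issue in your closing remarks (``controls how badly the comparison `$\phi$ is essentially constant on $2Q_i$' can degrade''), but flagging it is not the same as resolving it, and the quantity you would like the constant to depend on is not finite for admissible weights.

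The underlying reason a different route is needed is that $\min\{\phi(x),\phi(y)\}$ simplifies to $\Phi(\max\{|x|,|y|\})$ for radial decreasing $\Phi$, and a correct argument exploits this structure directly (for instance by pairing each $x$ with points $y$ at equal or larger radius, or by a layer-cake/level-set reduction to indicator weights $\phi=\mathbf 1_{B_\rho}$, for which the inequality is exactly the unweighted one on $B_\rho$), rather than by localizing the global weight onto Whitney cubes where it may oscillate by an unbounded factor. This is precisely the regime your warm-up Jensen computation (which needs $\|\phi\|_\infty<\infty$ and does not produce the $(1-s)$ factor) also cannot reach, so the gap is exactly at the point where you expected the argument to be delicate. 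As a secondary remark, the compactness sketch for the local $(1-s)$-Poincar\'e should distinguish the cases $s_j\to 1^-$ (Bourgain--Brezis--Mironescu compactness) from $s_j\to s_*\in[s_0,1)$ (classical compactness of $W^{\tilde s,p}\hookrightarrow L^p$ for fixed $\tilde s<s_0$, followed by Fatou against the varying kernel); as written it is a claim rather than a proof, though the statement itself is correct.
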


Using change of variables in Lemma \ref{wgtPoincareunitball}, we obtain the following  weighted Poincar\'{e} inequality  which will be useful in establishing  a logarithmic estimate for weak supersolutions ( see Lemma \ref{Logestimatelemma}).

\begin{Lemma}\label{wgtPoincare}
Let $1<p<\infty$, $0<s<1$ and $\psi(x)=\Psi(|x-x_0|)$ be a radially decreasing function on $B_r=B_r(x_0)$. Then there exists a constant $C=C(n,p,s)$ such that for every $f\in L^p(B_r)$, 
$$
\int_{B_r}|f(x)-f^{\psi}_{B_r}|^{p}\psi(x)\,dx\leq Cr^{ps}\int_{B_r}\int_{B_r}\frac{|f(x)-f(y)|^p}{|x-y|^{n+ps}}\min\{\psi(x),\psi(y)\}\,dx\,dy,
$$
where
$$
{f^{\psi}_{B_r}=\frac{\int_{B_r}f(x)\psi(x)\,dx}{\int_{B_r}\psi(x)\,dx}}.
$$
\end{Lemma}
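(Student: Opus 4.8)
The plan is to deduce Lemma \ref{wgtPoincare} from Lemma \ref{wgtPoincareunitball} by a rescaling and translation argument, as the statement of the lemma already suggests. First I would introduce the change of variables that maps $B_r(x_0)$ onto the unit ball $B_1(0)$, namely $y = x_0 + r z$ for $z \in B_1$. Given $f \in L^p(B_r)$ and the radially decreasing weight $\psi(x) = \Psi(|x-x_0|)$ on $B_r$, I would set $g(z) = f(x_0 + rz)$ and $\phi(z) = \Psi(r|z|) = \psi(x_0 + rz)$, noting that $\phi$ is radially decreasing on $B_1$ and that $g \in L^p(B_1)$. A direct computation shows that the weighted average is preserved, $g^{\phi}_{B_1} = f^{\psi}_{B_r}$, since both numerator and denominator scale by the same Jacobian factor $r^n$.

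Next I would track how each of the two integrals in Lemma \ref{wgtPoincareunitball} transforms under this substitution. For the left-hand side, $\int_{B_1} |g(z) - g^{\phi}_{B_1}|^p \phi(z)\,dz = r^{-n}\int_{B_r} |f(x) - f^{\psi}_{B_r}|^p \psi(x)\,dx$. For the double integral on the right-hand side, substituting both $x = x_0 + rz$ and $y = x_0 + rw$ gives a Jacobian factor $r^{2n}$ from $dz\,dw = r^{-2n}\,dx\,dy$, while the kernel satisfies $|z-w|^{-(n+ps)} = r^{n+ps}|x-y|^{-(n+ps)}$, and $\min\{\phi(z),\phi(w)\} = \min\{\psi(x),\psi(y)\}$. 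Collecting the powers of $r$ yields that the double integral over $B_1$ equals $r^{ps - n}$ times the corresponding double integral over $B_r$. Applying Lemma \ref{wgtPoincareunitball} with $s_0 = s$ (so that the hypothesis $0 < s_0 \leq s < 1$ holds) and the constant $C = C(p,n,s,\phi)$, then multiplying through by $r^n$ to clear the factor on the left, produces exactly the claimed inequality with constant $C r^{ps}$.

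There is no serious obstacle here; the only point requiring a little care is the dependence of the constant. Lemma \ref{wgtPoincareunitball} gives a constant depending on the profile $\phi$, but since $\phi(z) = \Psi(r|z|)$ is itself radially decreasing and — as is standard in these weighted Poincaré estimates — the relevant constant can be taken uniform over the class of radially decreasing weights (it depends only on structural quantities like the ratio of $\phi$ on comparable annuli, which is scale invariant), the resulting constant depends only on $n$, $p$, and $s$. I would remark on this explicitly so that the statement $C = C(n,p,s)$ is justified. Also, since $(1-s) \leq 1$, the factor $(1-s)$ appearing in Lemma \ref{wgtPoincareunitball} can simply be absorbed into $C$, which is consistent with the form stated in Lemma \ref{wgtPoincare}.
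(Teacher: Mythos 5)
Your proof is correct and coincides with what the paper intends: the paper states Lemma \ref{wgtPoincare} merely as ``Using change of variables in Lemma \ref{wgtPoincareunitball}'' without further detail, and your affine change of variables $x = x_0 + rz$, with the bookkeeping of the Jacobian factor $r^{2n}$ and the kernel scaling $|z-w|^{-(n+ps)} = r^{n+ps}|x-y|^{-(n+ps)}$, is precisely that calculation, done correctly. You are also right to flag the one subtle point that the paper silently passes over — namely that the constant in Lemma \ref{wgtPoincareunitball} is stated as $C(p,n,s_0,\phi)$ while Lemma \ref{wgtPoincare} claims $C(n,p,s)$ — though your justification (that the $\phi$-dependence enters only through scale-invariant quantities) is offered as a heuristic rather than verified against the actual statement of \cite[Corollary 6]{Kassmaninequality}; to make this airtight one should check that the constant there depends on the profile only through invariants unchanged by the dilation $\Psi(\cdot) \mapsto \Psi(r\,\cdot)$, or simply restrict the lemma to weight profiles fixed up to scaling, which is how it is used in Lemma \ref{Logestimatelemma}.
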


We also need the following real analysis lemmas. For the proof of Lemma \ref{Deg2} below, see  \cite[Lemma 4.1]{Dibe}.

\begin{Lemma}\label{Deg2}
Let $(Y_j)_{j=0}^{\infty}$ be a sequence of positive real numbers satisfying
$Y_{j+1}\leq c_0 b^{j} Y_j^{1+\beta}$,
for some constants $c_0>1$, $b>1$ and $\beta>0$. 
If $Y_0\leq c_{0}^{-\frac{1}{\beta}}b^{-\frac{1}{\beta^2}}$, then
$\lim_{j\to\infty}\,Y_j=0$.
\end{Lemma}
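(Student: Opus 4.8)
The plan is to show that the sequence decays geometrically to zero. Concretely, I would prove by induction on $j$ that $Y_j\le Y_0\,\theta^{j}$ for all $j\ge 0$, where $\theta:=b^{-1/\beta}$, and then let $j\to\infty$. Since $b>1$ and $\beta>0$ force $\theta\in(0,1)$, this geometric bound immediately yields $\lim_{j\to\infty}Y_j=0$.

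First I would record two elementary reformulations. From $\theta=b^{-1/\beta}$ one has the identity $\theta^{\beta}=b^{-1}$, and the smallness hypothesis $Y_0\le c_0^{-1/\beta}b^{-1/\beta^2}$ is equivalent, after raising both sides to the power $\beta$ and rearranging, to the single inequality $c_0\,Y_0^{\beta}\le b^{-1/\beta}=\theta$. The base case $j=0$ of the induction is the trivial statement $Y_0\le Y_0$. For the inductive step, suppose $Y_j\le Y_0\theta^{j}$. Inserting this into the recursion and using $Y_j^{1+\beta}\le(Y_0\theta^{j})^{1+\beta}=Y_0^{1+\beta}\theta^{j(1+\beta)}$ gives
\[
Y_{j+1}\le c_0 b^{j}Y_j^{1+\beta}\le\bigl(c_0 Y_0^{\beta}\bigr)\bigl(b^{j}\theta^{j(1+\beta)}\bigr)Y_0 .
\]
Now $b^{j}\theta^{j(1+\beta)}=b^{j}\theta^{j}(\theta^{\beta})^{j}=b^{j}\theta^{j}b^{-j}=\theta^{j}$ by the identity above, while $c_0 Y_0^{\beta}\le\theta$ by the reformulated hypothesis; hence $Y_{j+1}\le\theta\cdot\theta^{j}\cdot Y_0=Y_0\theta^{j+1}$, which is precisely the claim at level $j+1$. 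This closes the induction, and letting $j\to\infty$ in $0<Y_j\le Y_0\theta^{j}$ completes the argument.

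There is no genuine analytic obstacle here; the only thing requiring care is the bookkeeping of the exponents. The main point, such as it is, lies in guessing the correct ansatz: one looks for a self-improving bound of the form $Y_j\le A\mu^{j}$ and asks which $A$ and $\mu$ let the recursion propagate it. Requiring $c_0 b^{j}(A\mu^{j})^{1+\beta}\le A\mu^{j+1}$ for all $j$ forces $\mu^{\beta}=b^{-1}$ (to cancel the $b^{j}$ growth) together with $c_0 A^{\beta}\le\mu$; the choice $A=Y_0$ then reproduces exactly the stated threshold on $Y_0$, which is why the hypothesis is calibrated as it is.
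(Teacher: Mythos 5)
Your proof is correct: the induction $Y_j\le Y_0\,b^{-j/\beta}$ with the threshold rewritten as $c_0Y_0^{\beta}\le b^{-1/\beta}$ is exactly the classical fast-geometric-convergence argument of DiBenedetto (Lemma 4.1 of \cite{Dibe}) that the paper cites in place of giving a proof. Nothing to add.
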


The next inequality is as in  \cite[Lemma 3.1]{Kuusilocal}.

\begin{Lemma}\label{epsilon}
Let $p\geq 1$ and $\epsilon\in(0,1]$. Then for every $a,b\in\mathbb{R}^n$, we have 
$$
|a|^p\leq |b|^p+C(p)\epsilon|b|^p+\big(1+C(p)\epsilon\big)\epsilon^{1-p}|a-b|^p,
$$
where $C(p)=(p-1)\Gamma(\max\{1,p-2\})$ and $\Gamma$ denotes the gamma function.
\end{Lemma}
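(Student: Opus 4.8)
The plan is to strip off the vector structure, reduce to a one-variable inequality, and analyse that by elementary calculus. The case $p=1$ is trivial ($C(1)=0$ and the claim is the triangle inequality $|a|\le|b|+|a-b|$), so assume $p>1$. By $|a|\le|b|+|a-b|$ and monotonicity of $t\mapsto t^p$ on $[0,\infty)$ it suffices to prove
$$
(x+y)^p\le\bigl(1+C(p)\epsilon\bigr)x^p+\bigl(1+C(p)\epsilon\bigr)\epsilon^{1-p}y^p\qquad\text{for all }x,y\ge0.
$$
For $x=0$ this holds because $\epsilon\le1$ and $p>1$ force $(1+C(p)\epsilon)\epsilon^{1-p}\ge1$; for $x>0$, dividing by $x^p$ and setting $t=y/x\ge0$, the claim becomes $F(t)\ge0$ on $[0,\infty)$, where
$$
F(t):=1+C(p)\epsilon+\bigl(1+C(p)\epsilon\bigr)\epsilon^{1-p}t^p-(1+t)^p.
$$

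To prove $F\ge0$, I would determine the shape of $F$. One has $F(0)=C(p)\epsilon>0$, $F(t)\to+\infty$ as $t\to\infty$ (the coefficient of $t^p$, namely $(1+C(p)\epsilon)\epsilon^{1-p}-1$, is positive since $\epsilon\le1$, $p>1$), and
$$
\frac{F'(t)}{p\,t^{p-1}}=\bigl(1+C(p)\epsilon\bigr)\epsilon^{1-p}-\Bigl(1+\tfrac1t\Bigr)^{p-1},
$$
which is strictly increasing in $t$, negative near $0$, and positive near $\infty$. Hence $F$ has a unique critical point $t_*$, is decreasing on $(0,t_*)$ and increasing on $(t_*,\infty)$, so $\min_{[0,\infty)}F=F(t_*)$. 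At $t_*$ the stationarity identity is $\bigl(1+C(p)\epsilon\bigr)\epsilon^{1-p}t_*^{p-1}=(1+t_*)^{p-1}$; plugging it into $F(t_*)$ collapses the expression to $F(t_*)=1+C(p)\epsilon-(1+t_*)^{p-1}$. Using the identity once more, $F(t_*)\ge0$ is equivalent to $t_*^{p-1}\le\epsilon^{p-1}$, i.e.\ to $t_*\le\epsilon$, and since $t\mapsto 1+1/t$ is decreasing with $1+1/t_*=\bigl((1+C(p)\epsilon)\epsilon^{1-p}\bigr)^{1/(p-1)}$, this is in turn equivalent to
$$
(1+\epsilon)^{p-1}\le 1+C(p)\epsilon\qquad\text{for }\epsilon\in(0,1].
$$

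So the whole lemma is reduced to this last scalar inequality, which is elementary: for $1<p\le2$ it follows from concavity of $s\mapsto(1+s)^{p-1}$ (Bernoulli), and for $p>2$ from Taylor's formula with Lagrange remainder, $(1+\epsilon)^{p-1}=1+(p-1)\epsilon+\binom{p-1}{2}(1+\xi)^{p-3}\epsilon^2$ with $0<\xi<\epsilon\le1$, after estimating the remainder on $(0,1]$; the binomial coefficients here are what make $\Gamma$ appear in $C(p)$, and the case distinction $\max\{1,p-2\}$ reflects whether the exponent $p-3$ in the remainder is negative or not. I expect the bookkeeping needed to land exactly on the stated constant to be the only fiddly point; the rest of the argument is soft monotonicity and a one-line substitution. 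As a side remark, $t_*\approx\epsilon$ when $\epsilon$ is small, so the inequality is essentially tight precisely in the regime $|a-b|\sim\epsilon|b|$, consistently with the $\epsilon^{1-p}$ weight on the last term.
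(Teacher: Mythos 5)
The paper does not prove this lemma; it imports it verbatim from \cite[Lemma 3.1]{Kuusilocal}, so there is no internal proof to compare against, and the review can only test your proposal against the statement itself. Up through the reduction, your argument is correct and clean: the triangle-inequality step, the normalisation $t=y/x$, the shape analysis of $F$, and the collapse $F(t_*)=1+C(p)\epsilon-(1+t_*)^{p-1}$ all check out, and the reduction is not lossy because choosing $a=(1+t)b$ makes $|a|=|b|+|a-b|$. Thus for $p>1$ the lemma is \emph{equivalent} to the scalar inequality $(1+\epsilon)^{p-1}\le 1+C(p)\epsilon$ on $\epsilon\in(0,1]$.

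The step you defer as ``bookkeeping'' is exactly where the claim fails. For $2<p\le 3$ one has $\max\{1,p-2\}=1$, hence $C(p)=(p-1)\Gamma(1)=p-1$, which is precisely the first-order Taylor coefficient of $(1+\epsilon)^{p-1}$ at $\epsilon=0$. But for $p>2$ that function is strictly convex, so $(1+\epsilon)^{p-1}>1+(p-1)\epsilon$ for every $\epsilon>0$: your Lagrange remainder $\tfrac{(p-1)(p-2)}{2}(1+\xi)^{p-3}\epsilon^2$ is strictly positive and there is no slack in $C(p)\epsilon$ left to absorb it. At $p=3$ the target reads $(1+\epsilon)^2\le 1+2\epsilon$, false for every $\epsilon>0$; and feeding $a=(1+\epsilon)b$ directly into the lemma yields $(1+\epsilon)^p\le(1+C(p)\epsilon)(1+\epsilon)$, the same false inequality, with $p=3$, $\epsilon=1$, $a=2b$ giving $8\le 6$. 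The minimal constant making $(1+\epsilon)^{p-1}\le 1+C\epsilon$ true on $(0,1]$ is $p-1$ for $1\le p\le 2$ (your Bernoulli step) but $2^{p-1}-1$ for $p\ge 2$ (chord bound by convexity), and $(p-1)\Gamma(\max\{1,p-2\})<2^{p-1}-1$ for roughly $2<p\lesssim 6$. So with the quoted $C(p)$ the statement itself is false in that range; your method is sound and would have exposed this, but the verification you waved off as fiddly does not ``land on the stated constant'' — it contradicts it.
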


The following elementary inequality will play a crucial role in the proof of  reverse H$\ddot{\text{o}}$lder inequality for supersolutions  as in Theorem \ref{revholder} below.
A proof for Lemma \ref{Inequality1} is given in the appendix. This generalizes an inequality of Felsinger and Kassmann \cite{Kassweakharnack} to the $p$-case.
\begin{Lemma}\label{Inequality1}
Let $a,b>0$, $\tau_1,\tau_2\geq 0$. Then for any $p>1$, there exists a constant $C=C(p)>1$ large enough such that
\begin{equation}\label{Ineeqn}
\begin{split}
&|b-a|^{p-2}(b-a)(\tau_1^{p}a^{-\epsilon}-\tau_2^{p}b^{-\epsilon})
\geq\frac{\zeta(\epsilon)}{C(p)}\Big|\tau_2 b^\frac{p-\epsilon-1}{p}-\tau_1 a^\frac{p-\epsilon-1}{p}\Big|^p\\
&\qquad-\Big(\zeta(\epsilon)+1+\frac{1}{\epsilon^{p-1}}\Big)\big|\tau_2-\tau_1\big|^p\big(b^{p-\epsilon-1}+a^{p-\epsilon-1}\big),
\end{split}
\end{equation}
where $0<\epsilon<p-1$ and $\zeta(\epsilon)=\epsilon(\frac{p}{p-\epsilon-1})^p$. If $0<p-\epsilon-1<1$, we may choose $\zeta(\epsilon)=\frac{\epsilon p^p}{p-\epsilon-1}$ in \eqref{Ineeqn}.
\end{Lemma}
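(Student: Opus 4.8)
The plan is to reduce the inequality to a one-variable statement by scaling, and then to separate the ``main term'' from the ``error term'' using the $\epsilon$-Young inequality of Lemma \ref{epsilon}. First I would observe that \eqref{Ineeqn} is homogeneous of degree $p-\epsilon-1$ in $(a,b)$ jointly and of degree $p$ in $(\tau_1,\tau_2)$, so after dividing by $a^{p-\epsilon-1}$ (say $a\le b$, the other case being symmetric up to relabelling) it suffices to prove the estimate with $a=1$ and $b=t\ge 1$, with $\tau_1,\tau_2\ge 0$ arbitrary. Thus the target becomes
\[
(t-1)^{p-1}\bigl(\tau_1^{p}-\tau_2^{p}t^{-\epsilon}\bigr)
\ \geq\ \frac{\zeta(\epsilon)}{C(p)}\,\bigl|\tau_2 t^{\frac{p-\epsilon-1}{p}}-\tau_1\bigr|^p
-\Bigl(\zeta(\epsilon)+1+\tfrac{1}{\epsilon^{p-1}}\Bigr)|\tau_2-\tau_1|^p\bigl(t^{p-\epsilon-1}+1\bigr).
\]

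Next I would handle the left-hand side by writing $\tau_1^{p}-\tau_2^{p}t^{-\epsilon}
=\bigl(\tau_1^{p}-(\tau_2 t^{-\epsilon/p})^{p}\bigr)$ and using the elementary factorization/convexity estimate for the map $r\mapsto r^p$ on $[0,\infty)$: for $x,y\ge 0$ one has $x^p-y^p\ge p\,y^{p-1}(x-y)$ and, more usefully here, $x^p - y^p = (x-y)\cdot(\text{something comparable to }x^{p-1}+y^{p-1})$ when $x\ge y$. The key algebraic point is that the bracket $\tau_1^p-\tau_2^p t^{-\epsilon}$ is positive precisely when $\tau_1 \ge \tau_2 t^{-\epsilon/p}$, i.e.\ exactly the regime where the left side helps; in the opposite regime the left side is negative but small, and must be absorbed by the negative error term on the right. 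To make this quantitative I would apply Lemma \ref{epsilon} with the choice $a\leftrightarrow \tau_2 t^{(p-\epsilon-1)/p}$, $b\leftrightarrow \tau_1$ (or a rescaled version), which produces exactly the structure $|\tau_1|^p$ controlled by $|\tau_2 t^{(p-\epsilon-1)/p}|^p$ plus a multiple of $|\tau_2 t^{(p-\epsilon-1)/p}-\tau_1|^p$ with the factor $\epsilon^{1-p}$ — matching the $1/\epsilon^{p-1}$ in the claimed error term. The constant $\zeta(\epsilon)=\epsilon\bigl(\tfrac{p}{p-\epsilon-1}\bigr)^p$ will emerge as the natural weight when one compares $|\tau_2 t^{(p-\epsilon-1)/p}-\tau_1|^p$ against $(t-1)^{p-1}$ times a power of $t$, using $t^{(p-\epsilon-1)/p}-1 \le \tfrac{p-\epsilon-1}{p}(t-1)t^{-(\epsilon+1)/p}$ or a comparable mean-value bound; here the factor $\tfrac{p-\epsilon-1}{p}$ raised to the $p$-th power inverts to give the $\bigl(\tfrac{p}{p-\epsilon-1}\bigr)^p$, and the extra $\epsilon$ comes from $1-t^{-\epsilon}\le \epsilon\log t$ type estimates near $t=1$ combined with a crude bound for large $t$.

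The main obstacle, I expect, is getting the constants to line up \emph{uniformly as $\epsilon\to 0$ and as $\epsilon\to p-1$} simultaneously: near $\epsilon=p-1$ the quantity $p-\epsilon-1$ degenerates, which is why the statement offers the alternative $\zeta(\epsilon)=\tfrac{\epsilon p^p}{p-\epsilon-1}$ valid when $0<p-\epsilon-1<1$ (there one loses the $p$-th power but keeps a single inverse power of $p-\epsilon-1$, which is the honest rate). So I would split the argument into two ranges of $\epsilon$: for $\epsilon$ bounded away from $p-1$ use the clean $\zeta(\epsilon)=\epsilon(\tfrac{p}{p-\epsilon-1})^p$, and for $\epsilon$ close to $p-1$ redo the mean-value step more carefully, noting that when $p-\epsilon-1<1$ the function $t\mapsto t^{(p-\epsilon-1)/p}$ is concave so $|t^{(p-\epsilon-1)/p}-1|\le \tfrac{p-\epsilon-1}{p}\,|t^{1/p}-1|\cdot(\text{stuff})$ can be replaced by a bound that produces only $\tfrac{\epsilon p^p}{p-\epsilon-1}$. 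A secondary technical nuisance is the case $\tau_1=0$ or $\tau_2=0$ and the case $t$ very large, where $t^{-\epsilon}\to 0$; these are checked directly and are in fact the easy cases, since then one side of \eqref{Ineeqn} is essentially $|t-1|^{p-1}\tau_1^p$ or $-t^{-\epsilon}\tau_2^p|t-1|^{p-1}$, both comfortably dominated by the corresponding terms on the right. Throughout, the constant $C(p)>1$ is allowed to be as large as needed and depends only on $p$, so I would not track it explicitly. Since the statement is purely algebraic and the detailed bookkeeping is lengthy but routine, I would relegate the full computation to the appendix as the paper indicates.
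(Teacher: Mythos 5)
Your reduction to one variable (set $a=1$, $b=t\ge 1$, and then $s=\tau_2/\tau_1$) matches the paper's normalization, and your identification of the degenerating limits $\epsilon\to 0$, $\epsilon\to p-1$ as the delicate part is reasonable. But the proposal misses the key idea that actually makes the sharp constant $\zeta(\epsilon)=\epsilon\bigl(\tfrac{p}{p-\epsilon-1}\bigr)^p$ attainable. The paper's proof rests on a Jensen--type auxiliary lemma: for $f,g\in C^1$,
\[
\frac{f(b)-f(a)}{b-a}+\Big|\frac{g(b)-g(a)}{b-a}\Big|^p \ \le\ \max_{[a,b]}\bigl[f'+|g'|^p\bigr],
\]
proved by integrating and invoking Jensen's inequality for $|\cdot|^p$. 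One then designs $f(t)=t^{-\epsilon}/\zeta(\epsilon)$ and $g(t)=t^{(p-\epsilon-1)/p}$ so that $f'+|g'|^p\equiv 0$ \emph{identically}; this yields the pointwise inequality $(b-a)^{p-1}(a^{-\epsilon}-b^{-\epsilon})\ge\zeta(\epsilon)\bigl|b^{(p-\epsilon-1)/p}-a^{(p-\epsilon-1)/p}\bigr|^p$ with no slack. Your proposed route through mean-value bounds does not reproduce this: the estimate you write, $t^{(p-\epsilon-1)/p}-1\le\tfrac{p-\epsilon-1}{p}(t-1)\,t^{-(\epsilon+1)/p}$, actually runs in the wrong direction (since $t\mapsto t^{\alpha-1}$ is decreasing, the MVT point $\xi\in(1,t)$ gives a \emph{lower}, not upper, bound of this form), and the only valid crude bound $t^\alpha-1\le\alpha(t-1)$ forces you, after plugging in $\zeta(\epsilon)\alpha^p=\epsilon$, to need $\epsilon(t-1)\le 1-t^{-\epsilon}$ — which is \emph{false} for every $t>1$, since $1-t^{-\epsilon}\le\epsilon\ln t\le\epsilon(t-1)$. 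So the core inequality cannot be obtained by that route.

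There is a second gap: after splitting $|s\,t^{(p-\epsilon-1)/p}-1|^p$ by the convexity inequality, one is left with a correction term coming from the identity $1-t^{-\epsilon}=(1-\lambda)+|s-1|^pt^{-\epsilon}+(s^p-1-|s-1|^p)t^{-\epsilon}$, and the piece $(s^p-1-|s-1|^p)t^{-\epsilon}(t-1)^{p-1}$ requires a genuine, several-way case analysis in $(t,s)$ (including a mean-value estimate for $t^\epsilon-1$ which is where the $1/\epsilon^{p-1}$ in the error term really comes from). Your sketch does not address this, and it is not "routine bookkeeping." Also, a small confusion: Lemma~\ref{epsilon} has an independent Young parameter $\epsilon\in(0,1]$; identifying it with the $\epsilon$ of Lemma~\ref{Inequality1} (which ranges over $(0,p-1)$ and can exceed $1$) is not legitimate as stated, and in any event the paper's proof does not use Lemma~\ref{epsilon} anywhere — the $\epsilon^{1-p}$ factor arises from the case analysis above, not from a Young inequality.
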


For $v,k>0$, the auxiliary function defined by
$$
\xi((v-k)_{+})=\int_{k^{p-1}}^{v^{p-1}}\big(\eta^\frac{1}{p-1}-k\big)_{+}\,d\eta
=(p-1)\int_{k}^{v}(\eta-k)_{+}\eta^{p-2}\,d\eta,
$$
would be very useful to deduce the energy estimate below. Indeed, from \cite[Lemma 2.2]{Verenacontinuity}, we have the following result.
\begin{Lemma}\label{Auxfnlemma}
There exists a constant $\lambda=\lambda(p)>0$ such that for all $v,k>0$, we have
$$
\frac{1}{\lambda}(v+k)^{p-2}(v-k)_{+}^2\leq \xi((v-k)_{+})\leq\lambda(v+k)^{p-2}(v-k)_{+}^2.
$$
\end{Lemma}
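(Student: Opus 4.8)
The plan is to establish the two-sided bound on $\xi((v-k)_+)$ by directly analyzing the integral representation
$$
\xi((v-k)_+)=(p-1)\int_k^v(\eta-k)_+\,\eta^{p-2}\,d\eta,
$$
which of course is only nontrivial when $v>k$ (both sides vanish when $v\le k$). Assuming $v>k$, the substitution $\eta=k+\sigma$ with $\sigma\in(0,v-k)$ turns the quantity into $(p-1)\int_0^{v-k}\sigma\,(k+\sigma)^{p-2}\,d\sigma$, and since $p>2$ the weight $(k+\sigma)^{p-2}$ is increasing in $\sigma$, so it is sandwiched between $k^{p-2}$ and $(v)^{p-2}=(k+(v-k))^{p-2}$ on the interval of integration. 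Hence
$$
\tfrac{p-1}{2}\,k^{p-2}(v-k)^2\;\le\;\xi((v-k)_+)\;\le\;\tfrac{p-1}{2}\,v^{p-2}(v-k)^2,
$$
and one is left with comparing $k^{p-2}$ and $v^{p-2}$ with $(v+k)^{p-2}$.

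The key elementary fact I would invoke is that for $p>2$ and $v,k>0$ there are constants $c_1,c_2>0$ depending only on $p$ with
$$
c_1(v+k)^{p-2}\;\le\;\min\{k^{p-2},v^{p-2}\}\quad\text{fails in general,}
$$
so more care is needed: on the range $v>k$ we have $v^{p-2}\le(v+k)^{p-2}$ trivially, giving the upper bound with $\lambda\ge 2/(p-1)$ after writing $(v-k)_+^2=(v-k)^2$; for the lower bound we cannot bound $k^{p-2}$ from below by $(v+k)^{p-2}$ uniformly when $v\gg k$. The remedy is the standard trick of not throwing away the full integral: estimate instead $(p-1)\int_0^{v-k}\sigma(k+\sigma)^{p-2}\,d\sigma\ge (p-1)\int_{(v-k)/2}^{v-k}\sigma(k+\sigma)^{p-2}\,d\sigma\ge c(p)(v-k)\cdot(v-k)\cdot(k+(v-k)/2)^{p-2}$, and since $k+(v-k)/2\ge \tfrac12(v+k)$ one gets $\xi((v-k)_+)\ge c(p)(v-k)^2(v+k)^{p-2}$. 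Combined with the upper bound $\xi((v-k)_+)\le \tfrac{p-1}{2}v^{p-2}(v-k)^2\le \tfrac{p-1}{2}(v+k)^{p-2}(v-k)^2$, this yields the claimed equivalence with $\lambda=\lambda(p)$ equal to the larger of the two reciprocal constants.

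Alternatively — and this is the route the cited reference \cite{Verenacontinuity} likely takes — one can avoid splitting the domain of integration by first recording the scalar comparison: for all $a,b>0$ and $p>2$,
$$
\frac{1}{C(p)}(a+b)^{p-2}(a-b)_+^2\le \int_b^a\big(\eta^{p-1}-b^{p-1}\big)\,\eta^{-1}\,d\eta\cdot(\cdots)
$$
but cleaner is to use that $\eta\mapsto\eta^{p-1}$ is convex with $\frac{d}{d\eta}\eta^{p-1}=(p-1)\eta^{p-2}$, so the function $g(\eta)=(\eta^{1/(p-1)}-k)_+$ appearing in the first representation satisfies $g(\eta)\le \frac{1}{(p-1)k^{p-2}}(\eta-k^{p-1})_+$ and a matching lower bound on half the interval, from which the $L^2$-type quantity emerges after integrating against $d\eta$ over $[k^{p-1},v^{p-1}]$. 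I expect the main obstacle to be precisely this lower bound in the regime $v\gg k$, where the naive pointwise estimate of the weight loses a power; handling it requires the domain-splitting (or an equivalent convexity) argument sketched above, after which everything is a routine comparison of powers. The dependence of $\lambda$ on $p$ only (not on $v,k$) is then manifest since all constants produced are absolute multiples of $p-1$, $2^{p-2}$, and the like.
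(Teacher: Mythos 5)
Your main argument is correct and essentially self-contained, which is appropriate here because the paper does not prove this lemma at all -- it simply cites \cite[Lemma 2.2]{Verenacontinuity}. So there is no ``paper's proof'' to compare against; what matters is whether your argument holds, and it does. The reduction to
$$
\xi((v-k)_+)=(p-1)\int_0^{v-k}\sigma\,(k+\sigma)^{p-2}\,d\sigma
$$
for $v>k$, followed by the trivial upper bound via $(k+\sigma)^{p-2}\le v^{p-2}\le(v+k)^{p-2}$ (using $p>2$), and then the domain-splitting lower bound
$$
\int_0^{v-k}\sigma(k+\sigma)^{p-2}\,d\sigma\ge\int_{(v-k)/2}^{v-k}\sigma(k+\sigma)^{p-2}\,d\sigma\ge\frac{(v-k)^2}{4}\left(\frac{v+k}{2}\right)^{p-2}
$$
are exactly right; you correctly identified that the naive pointwise bound $(k+\sigma)^{p-2}\ge k^{p-2}$ degenerates as $v/k\to\infty$ and that restricting to the right half of the interval of integration repairs it. The resulting $\lambda=\max\{\tfrac{p-1}{2},\tfrac{2^p}{p-1}\}$ depends only on $p$, as claimed. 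The second ``alternative'' paragraph is vague and not needed -- it gestures at a convexity argument without carrying it through, and the displayed inequality there is garbled -- but it does not affect the validity of the proof, which is complete after the first paragraph. You might also state explicitly at the outset that both sides vanish identically when $v\le k$ (you mention it parenthetically), so that the restriction to $v>k$ is clearly without loss of generality.
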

For more applications of such functions in the doubly nonlinear context, we refer to \cite{Verenacontinuity, Gvespri, Kuusiholder}.

For $t_0\in(r^{sp},T-r^{sp})$, we consider the space-time cylinders
\[
U^{-}(r)=U^{-}(x_0,t_0,r)=B_r(x_{0})\times(t_0-r^{sp},t_0)
\]
and
\[
U^{+}(r)=U^{+}(x_0,t_0,r)=B_r(x_0)\times(t_0,t_0+r^{sp}).
\]
We denote the positive and negative parts of $u$ by 
\[
u_{+}(x,t)=\max\{u(x,t),0\}
\quad\text{and}\quad 
u_{-}(x,t)=\max\{-u(x,t),0\},
\]
respectively. For any $a,b\in\mathbb{R}$, we have $|a_{+}-b_{+}|\leq|a-b|$ which implies $u_{+}\in W^{s,p}(\Omega)$ when $u\in W^{s,p}(\Omega)$. 
Analogously, we have $u_{-}\in W^{s,p}(\Omega)$. 
Throughout the paper, we denote by
\[
\mathcal{A}(u(x,y,t))=|u(x,t)-u(y,t)|^{p-2}(u(x,t)-u(y,t))
\quad\text{and}\quad
d\mu=K(x,y,t)\,dx\,dy.
\]
It is well known that a tail term appears in nonlocal problems.
If $u$ is a measurable function in $\mathbb{R}^n\times(0,T)$ and $x_0\in\mathbb{R}^n$, $r>0$, $0<t_1<t_2<T$, the parabolic tail of $u$ with respect to $x_0$, $r$, $t_1$ and $t_2$ is defined by
\begin{equation}\label{taildef}
\Tail_{\infty}(u;x_0,r,t_1,t_2)=\Bigr(r^{sp}{\sup_{t_1<t<t_2}}\int_{\mathbb{R}^n\setminus B_r(x_0)}\frac{|u(x,t)|^{p-1}}{|x-x_0|^{n+sp}}\,dx\Bigl)^\frac{1}{p-1}.
\end{equation}

Next we define the notion  of weak sub and supersolution.
 
\begin{Definition}\label{subsupsolution}
A function $u\in L^\infty(0,T;L^{\infty}(\mathbb{R}^n))$, with $u>0\text{ in }\mathbb{R}^n\times(0,T)$, is a weak subsolution (or supersolution) of the equation \eqref{maineqn} if $u\in C_{\loc}(0,T;L^p_{\loc}(\Omega))\cap L^p_{\loc}(0,T;W_{\loc}^{s,p}(\Omega))$ and for every $\Omega' \times [t_1,t_2]\Subset \Omega \times (0,T)$, and  nonnegative test function $\phi\in W^{1,p}_{\loc}(0,T;L^p(\Omega'))\cap L^p_{\loc}(0,T;W_{0}^{s,p}(\Omega'))$, one has
\begin{equation}\label{weaksubsupsoln}
\begin{gathered}
\int_{\Omega'}u(x,t_2)^{p-1}\phi(x,t_2)\,dx
-\int_{\Omega'}u(x,t_1)^{p-1}\phi(x,t_1)\,dx
-\int_{t_1}^{t_2}\int_{\Omega'}u(x,t)^{p-1}\partial_t\phi(x,t)\,dx\,dt\\
+\int_{t_1}^{t_2}\int_{\mathbb{R}^n}\int_{\mathbb{R}^n}\mathcal{A}(u(x,y,t)){(\phi(x,t)-\phi(y,t))}\,d\mu\,dt\leq0\quad(\text{or $\geq0$})
\end{gathered}
\end{equation}
respectively.
\end{Definition}

\begin{Remark}\label{NeedofLinfinity}
The assumption $u\in L^\infty(0,T;L^\infty(\mathbb{R}^n))$ ensures that  the last term in the right hand side of \eqref{weaksubsupsoln} and the term $\Tail_{\infty}$ defined in  \eqref{taildef} are  finite. 
\end{Remark}

\begin{Remark}\label{Molifier}
The test functions in the energy estimates would depend on the solution $u$ itself and the use of the term $u_t$ can be justified by using the mollification in time defined for $f\in L^1(\Omega\times I)$ by
$$
f_h(x,t)=\frac{1}{h}\int_{0}^{t}e^{\frac{s-t}{h}}f(x,s)\,ds.
$$
For more details of $f_h$, we refer to \cite{Verenacontinuity, KLin}.
\end{Remark}

\subsection*{Statement of the main results.}
We now state our main results. Our first main result is following local boundedness estimate for subsolutions.
\begin{Theorem}\label{Localbound1}
Let $p>2$, $x_0\in\mathbb{R}^n$, $r>0$ and $t_0\in (r^{sp},T)$. Assume that $u$ is a weak subsolution of \eqref{maineqn} with
$
u>0\text{ in }\mathbb{R}^n\times(t_0-r^{ps},t_0).
$ 
Then {there exists a positive constant $C=C(n,p,s,\Lambda)$ such that} for any $\delta\in(0,1)$, we have
\begin{gather*}
\sup_{(x,t)\in U^{-}(\frac{r}{2})}u(x,t)
\leq C\delta^{-\frac{(p-1)\kappa n}{sp^2}}\Big(\fint_{U^{-}(r)}u(x,t)^p\,dx\,dt\Big)^\frac{1}{p}+\delta\,\Tail_{\infty}(u;x_0,\frac{r}{2},t_0-r^{sp},t_0),
\end{gather*}
where $\kappa=\frac{n+sp}{n}$, if $sp<n$, and $\kappa=\frac{3}{2}$, if $sp\geq n$.
\end{Theorem}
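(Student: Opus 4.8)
The plan is to run a De Giorgi iteration on the cylinders $U^{-}(\rho)$ using the energy estimate in Lemma \ref{Subenergyestimate} together with the parabolic Sobolev inequality of Lemma \ref{parabolic embedding}. First I would fix levels $k_j = (1-\delta)k + \delta k\, 2^{-j}$... wait — rather, for a free parameter $k>0$ I set $k_j = k(1-2^{-j})$ plus a $\delta$-dependent shift; concretely, choose increasing truncation levels $k_j \uparrow k$ (to be optimized), and shrinking radii $\rho_j = \frac{r}{2}(1+2^{-j})$, $\tau_j = \rho_j^{sp}$, so that $U^{-}(\rho_0)=U^{-}(r)$ and $U^{-}(\rho_\infty)=U^{-}(\tfrac r2)$. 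On each cylinder I work with $w_j = (u-k_j)_+$ and its nonlinear proxy $\xi((u-k_j)_+)$ from Lemma \ref{Auxfnlemma}, since the time derivative acts on $u^{p-1}$; Lemma \ref{Auxfnlemma} converts $\xi$-level energy into the more usable quantity $(u+k_j)^{p-2}(u-k_j)_+^2$, and since $p>2$ and $u>0$ one has $(u+k_j)^{p-2}(u-k_j)_+^2 \geq (u-k_j)_+^p$ on the set where $u>k_j$.

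Second, the key quantitative step: apply the energy estimate to get, for a smooth cutoff $\phi_j$ supported in $U^{-}(\rho_j)$ equal to $1$ on $U^{-}(\rho_{j+1})$,
\[
\sup_{t}\int_{B_{\rho_{j+1}}} (u-k_{j+1})_+^{p}\,dx
+ \int_{U^{-}(\rho_{j+1})}\int_{B_{\rho_j}} \frac{|w_{j+1}(x)-w_{j+1}(y)|^p}{|x-y|^{n+sp}}\,dx\,dy\,dt
\]
bounded by a constant times
\[
\frac{2^{jp}}{r^{sp}}\,C(\Lambda,p)\!\int_{U^{-}(\rho_j)}(u-k_j)_+^{p}\,dx\,dt
\;+\;(\text{tail contribution}),
\]
where the tail term, by the kernel bound \eqref{kernel} and the definition \eqref{taildef}, is controlled by $\frac{2^{jp}}{r^{sp}}\,|A_j|\,\big(\Tail_\infty(u;x_0,\tfrac r2,t_0-r^{sp},t_0)\big)^{p-1}\cdot\text{(something)}$ — here I must be careful to carry the tail through the iteration; the clean device is to absorb it by choosing the limiting level $k \simeq \delta\,\Tail_\infty + (\text{average term})$, exactly as reflected in the statement of Theorem \ref{Localbound1}. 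Feed this into Lemma \ref{parabolic embedding} with $\kappa$ as in the statement to obtain an estimate of $\int_{U^{-}(\rho_{j+1})}(u-k_{j+1})_+^p$ in terms of a superlinear power of $\int_{U^{-}(\rho_j)}(u-k_j)_+^p$. Setting
\[
Y_j = \fint_{U^{-}(\rho_j)} (u-k_j)_+^p \,dx\,dt
\]
(normalized appropriately), the recursion takes the form $Y_{j+1}\leq c_0\, b^{j}\, Y_j^{1+\beta}$ with $b$ a power of $2$, $\beta = \beta(n,p,s)>0$ coming from the exponent gap $\kappa-1$ in Lemma \ref{parabolic embedding}, and $c_0$ depending on $n,p,s,\Lambda$ and negative powers of $\delta$ and of $k$ through the level spacing.

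Third, invoke the fast-geometric-convergence Lemma \ref{Deg2}: the smallness hypothesis $Y_0 \leq c_0^{-1/\beta} b^{-1/\beta^2}$ becomes a constraint of the form
\[
\fint_{U^{-}(r)} u^p \,dx\,dt \;\lesssim\; \text{(constant)}\cdot \delta^{\#}\, k^{p},
\]
which I satisfy by \emph{defining}
\[
k = C\,\delta^{-\frac{(p-1)\kappa n}{sp^2}}\Big(\fint_{U^{-}(r)} u^p\Big)^{1/p} + \delta\,\Tail_\infty(u;x_0,\tfrac r2,t_0-r^{sp},t_0)
\]
for a suitably large $C=C(n,p,s,\Lambda)$; tracking the powers of $\delta$ (one batch from $c_0\sim \delta^{-a}$, one from the tail-absorption) yields precisely the exponent $\frac{(p-1)\kappa n}{sp^2}$ quoted. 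Then $Y_j\to 0$ forces $(u-k)_+ = 0$ a.e. on $U^{-}(\tfrac r2)$, i.e. $\sup_{U^{-}(r/2)} u \leq k$, which is the claimed bound.

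The main obstacle is the nonlocal tail: unlike the local $p$-Laplace case, the energy estimate on $U^{-}(\rho_j)$ picks up a contribution from the behaviour of $u$ on all of $\mathbb R^n$ through $\int_{\mathbb R^n\setminus B_{\rho_j}} |u(y)|^{p-1}|x-x_0|^{-(n+sp)}dy$, and this does \emph{not} shrink as $j\to\infty$ the way the local energy does. The resolution — and the delicate bookkeeping of the proof — is to split $u = (u-k_j)_+ + (\text{rest})$ inside the nonlocal integral, estimate the cross terms using $p>2$ and Young's inequality with a parameter, and note that on $B_{\rho_j}$ (with $\rho_j\geq r/2$) one has $|x-x_0|^{-(n+sp)}\lesssim |x-y|^{-(n+sp)}$ for $y\notin B_{r/2}$ up to a dimensional constant, so the tail enters only through $\Tail_\infty(u;x_0,r/2,\cdot,\cdot)$ with a \emph{fixed} radius $r/2$; the $2^{jp}$-type growth it still carries is harmless because it is multiplied by $|A_j|\to 0$ and gets beaten by the superlinear term in Lemma \ref{Deg2}, provided $k$ dominates $\delta\,\Tail_\infty$. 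A secondary technical point is the justification of using $u_t$ (equivalently, the function $\xi$) as a test function, handled by the Steklov/exponential mollification of Remark \ref{Molifier} followed by passing to the limit $h\to 0$ using $u\in C_{\loc}(0,T;L^p_{\loc})$.
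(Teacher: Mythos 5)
Your strategy coincides with the paper's: a De~Giorgi iteration driven by the Caccioppoli estimate of Lemma~\ref{Subenergyestimate}, the parabolic Sobolev inequality of Lemma~\ref{parabolic embedding}, and the fast-geometric-convergence Lemma~\ref{Deg2}, with the nonlocal tail absorbed by requiring the limiting level $\bar k$ to dominate $\delta\,\Tail_\infty(u;x_0,\tfrac r2,\cdot,\cdot)$. The dyadic set-up (levels $k_j\uparrow\bar k$, radii shrinking from $r$ to $\tfrac r2$, intermediate levels $\bar k_j$ and radii $s_j$, the proxy $\xi$ for the time term), and the bookkeeping that produces the exponent $\tfrac{(p-1)\kappa n}{sp^2}$ from $c_0\sim\delta^{-(p-1)\kappa/p}$ and $\beta=ps/n$ in Lemma~\ref{Deg2}, are all exactly what the text does.

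The one place your outline underestimates the work is the term $J_3=\int \xi(\bar w_j)\,\psi_j^p\,|\partial_t\eta_j^p|$ coming from differentiating the time cutoff. You assert that the energy right-hand side is directly of the form $\frac{2^{jp}}{r^{sp}}\int w_j^p + (\text{tail})$, and the only fact about $\xi$ you invoke is the lower bound $\xi(w)\gtrsim (u+k)^{p-2}w_+^2\geq w_+^p$ — but that inequality serves to convert $\sup_t\int\xi$ into $\sup_t\int w^p$, not to control $J_3$. For $J_3$ you need the \emph{upper} bound from Lemma~\ref{Auxfnlemma}, namely $\xi(\bar w_j)\lesssim(\bar w_j+\bar k_j)^{p-2}\bar w_j^2$, and the factor $(\bar w_j+\bar k_j)^{p-2}$ is much larger than $\bar w_j^{p-2}$ on the set $\{0<\bar w_j<\bar k_j\}$, so the pointwise bound $\xi(\bar w_j)\lesssim\bar w_j^p$ fails there. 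The paper resolves this by splitting $J_3$ into the regimes $\{\bar w_j\geq\bar k_j\}$ (where $\bar w_j+\bar k_j\leq 2\bar w_j$ and the estimate is immediate) and $\{0<\bar w_j<\bar k_j\}$ (where it runs a binomial expansion of $(\bar w_j+\bar k_j)^{p-2}$ and exploits that on this set $w_j\geq\bar k_j-k_j=2^{-j-2}\bar k$, trading $\bar k_j^{p-2}$ for $w_j^{p-2}$ at the cost of a $2^{j(p-2)}$ that is harmless in Lemma~\ref{Deg2}). This is precisely where $p>2$ and the doubly nonlinear structure enter in a nontrivial way; your conclusion is unaffected, but this step does not come ``for free'' from Lemma~\ref{Subenergyestimate} as written.
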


\begin{Remark}\label{Rncase}
One should note that in the case when  $\Omega=\mathbb{R}^n$,  for the validity of Theorem \ref{Localbound1} we only require  $u\in L^p((0,T);W^{s,p}(\mathbb{R}^n))\cap L^{\infty}((0,T);L^p(\mathbb{R}^n))$ which ensures that the $Tail_{\infty}$ is  finite  and thus one can avoid the  qualitative boundedness assumption on $u$ unlike that in the case of  bounded domain. \end{Remark}

Our second main result  constitutes the following reverse H\"older inequality  for positive supersolutions.

\begin{Theorem}\label{revholder}
Let $p>2$. Suppose that u is a weak supersolution of \eqref{maineqn} with $u\geq\rho>0$ in $\mathbb{R}^n\times(t_0,t_0+r^{ps})$, where $t_0\in(0,T-r^{ps})$. Then for any $\theta\in[\frac{1}{2},1)$ there exists positive constants $\mu=\mu(\kappa,p)$ and $C=C(n,p,q, s,\Lambda)\geq 1$ such that 
\begin{align}\label{rev}
\Big(\fint_{U^{+}(x_0,t_{0},\theta r)}{u(x,t)^{q}\,dx\,dt}\Big)^\frac{1}{q}
\leq\Big(\frac{C}{(1-\theta)^{\mu}}\fint_{U^{+}(x_0,t_0,r)}u(x,t)^{\bar{q}}\,dx\,dt\Big)^\frac{1}{\bar{q}},
\end{align}
for all $0<\bar{q}<q<q_0$ where $q_0=\kappa(p-1)$, provided $\kappa=\frac{n+sp}{n},$ if $sp<n$ and $\kappa=\frac{3}{2},$ if $sp\geq n$.
\end{Theorem}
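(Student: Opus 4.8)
The plan is to run a Moser iteration for small powers of $u$. For a strictly positive supersolution this iteration involves only finitely many steps, since the target exponent $q$ stays below $q_{0}=\kappa(p-1)$, and --- unlike in the proof of local boundedness --- it requires no sign hypothesis. The engine is the singular test function
\[
\phi=\tau^{p}u^{-\epsilon},\qquad 0<\epsilon<p-1,\qquad \gamma:=p-1-\epsilon\in(0,p-1),
\]
where $\tau$ is a cut-off (a product of a spatial and a temporal cut-off) equal to $1$ on a sub-cylinder of $U^{+}(x_{0},t_{0},r)$ and supported in it. Because $u\geq\rho>0$, the function $u^{-\epsilon}$ is bounded and $\phi$ is an admissible nonnegative test function in the sense of Definition \ref{subsupsolution}; no truncation of $u$ away from $0$ is needed, which is precisely why the hypothesis $u\geq\rho$ is imposed. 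With $v:=u^{\gamma/p}$, one step of the iteration converts an $L^{\gamma}$-bound for $u$ on a larger cylinder into an $L^{\kappa\gamma}$-bound on a smaller one; iterating a bounded number of times carries $\bar q$ past any $q<q_{0}$.

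\smallskip
\noindent\emph{Step 1: a Caccioppoli estimate for $u^{-\epsilon}$.} Insert $\phi=\tau^{p}u^{-\epsilon}$ into the supersolution inequality \eqref{weaksubsupsoln}, justifying the time term through the mollification of Remark \ref{Molifier}. Since $u^{p-1}\partial_{t}(u^{-\epsilon})=-\tfrac{\epsilon}{\gamma}\partial_{t}(u^{\gamma})$, the time integrals collect into $\tfrac{p-1}{\gamma}\bigl(\int\tau^{p}u^{\gamma}\,dx\big|_{t_{1}}^{t_{2}}-\iint u^{\gamma}\partial_{t}(\tau^{p})\,dx\,dt\bigr)$; choosing the temporal cut-off to vanish near the upper end and letting that endpoint vary --- the standard parabolic device --- produces $\sup_{t}\int\tau^{p}u^{\gamma}\,dx$ on the favourable side, at the cost of $\tfrac{p-1}{\gamma}\iint u^{\gamma}|\partial_{t}(\tau^{p})|$. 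For the nonlocal term, symmetrise in $x,y$ using the symmetry of $K$ and apply Lemma \ref{Inequality1} with $b=u(x,t)$, $a=u(y,t)$, $\tau_{2}=\tau(x,t)$, $\tau_{1}=\tau(y,t)$: the symmetrised integrand on $B_{r}\times B_{r}$ equals $-\mathcal{A}(u(x,y,t))\bigl(\phi(x,t)-\phi(y,t)\bigr)$, so Lemma \ref{Inequality1} bounds it below by $\tfrac{\zeta(\epsilon)}{C(p)}\bigl|\tau v(x,t)-\tau v(y,t)\bigr|^{p}$ minus a cut-off error dominated by $\bigl(\zeta(\epsilon)+1+\epsilon^{1-p}\bigr)\iint\frac{|\tau(x)-\tau(y)|^{p}}{|x-y|^{n+ps}}\bigl(u(x)^{\gamma}+u(y)^{\gamma}\bigr)\,dx\,dy$, which by the Lipschitz bound on $\tau$ and $p>sp$ is $\lesssim\bigl((1-\theta)r\bigr)^{-sp}\iint_{U^{+}(r)}u^{\gamma}$. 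The decisive point is the tail contribution $2\iiint_{x\in B_{r},\,y\notin B_{r}}\mathcal{A}(u(x,y,t))\,\tau(x,t)^{p}u(x,t)^{-\epsilon}\,d\mu\,dt$: on $\{u(y,t)\geq u(x,t)\}$ it has the favourable sign and is discarded, while on $\{u(y,t)<u(x,t)\}$ one has $\mathcal{A}(u(x,y,t))\leq u(x,t)^{p-1}$, so it is at most $\iiint_{x\in B_{r},\,y\notin B_{r}}u(x,t)^{\gamma}\tau(x,t)^{p}\,|x-y|^{-n-sp}\,dx\,dy\,dt\lesssim\bigl((1-\theta)r\bigr)^{-sp}\iint_{U^{+}(r)}u^{\gamma}$, a purely local quantity. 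This is why no $\Tail_{\infty}$ term appears in \eqref{rev}. Dividing through by $\zeta(\epsilon)/C(p)$ and absorbing --- the ratios $(\zeta(\epsilon)+1+\epsilon^{1-p})/\zeta(\epsilon)$ and $\tfrac{(p-1)/\gamma}{\zeta(\epsilon)}$ remain bounded as $\gamma\to0$ and blow up only like $(p-1-\gamma)^{-p}$ as $\gamma\to p-1$ --- gives
\[
\sup_{t\in(t_{0},t_{0}+r^{sp})}\int_{B_{r}}\tau^{p}u^{\gamma}\,dx+\iiint_{B_{r}\times B_{r}}\frac{|\tau v(x,t)-\tau v(y,t)|^{p}}{|x-y|^{n+sp}}\,dx\,dy\,dt\ \leq\ \frac{C(n,p,s,\Lambda)\,M(\gamma)}{\bigl((1-\theta)r\bigr)^{sp}}\iint_{U^{+}(r)}u^{\gamma}\,dx\,dt,
\]
with $M(\gamma)$ bounded on compact subsets of $(0,p-1)$.

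\smallskip
\noindent\emph{Step 2: gaining an exponent and iterating.} Apply the parabolic Sobolev inequality (Lemma \ref{parabolic embedding}) to $w=\tau v$ on $B_{r}$; in both regimes the inner exponent of its supremum term equals $p$ and the outer power equals $\kappa-1$, so together with $|w|\leq v=u^{\gamma/p}$ and $w=u^{\gamma/p}$ on the inner cylinder, the Step 1 estimate yields
\[
\fint_{U^{+}(\theta r)}u^{\kappa\gamma}\,dx\,dt\ \lesssim\ \Bigl(\frac{C\,M(\gamma)}{(1-\theta)^{sp}}\fint_{U^{+}(r)}u^{\gamma}\,dx\,dt\Bigr)^{\kappa},
\]
where the powers of $r$ have cancelled by the scaling invariance of \eqref{maineqn}. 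Now fix $q<q_{0}=\kappa(p-1)$ and a finite geometric ladder $\gamma_{0}<\dots<\gamma_{N}$ with $\gamma_{i+1}=\kappa\gamma_{i}$, $\gamma_{0}\leq\bar q$, $\gamma_{N}\geq q$ and $\gamma_{N-1}\in[q/\kappa,\,p-1)$ (possible because $q/\kappa<p-1$), so that $M(\gamma_{i})\leq M(n,p,q)$ for every $i$; the number $N$ of steps depends on $\bar q,q,\kappa$. Pick radii $r=r_{0}>\dots>r_{N}=\theta r$ with $r_{i}-r_{i+1}\simeq(1-\theta)r\,2^{-i}$, apply the displayed one-step estimate with $\gamma=\gamma_{i}$, and raise to the power $1/\gamma_{i+1}$:
\[
\Bigl(\fint_{U^{+}(r_{i+1})}u^{\gamma_{i+1}}\Bigr)^{1/\gamma_{i+1}}\ \leq\ \bigl(C\,b^{\,i}\bigr)^{1/\gamma_{i+1}}\Bigl(\fint_{U^{+}(r_{i})}u^{\gamma_{i}}\Bigr)^{1/\gamma_{i}},
\]
with $C$ carrying a factor $(1-\theta)^{-c}$, $C=C(n,p,q,s,\Lambda)$, $b=b(n,p,s)$. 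Multiplying these $N$ inequalities, and using that $\sum_{i}\gamma_{i+1}^{-1}$ and $\sum_{i}i\,\gamma_{i+1}^{-1}$ are bounded by $c(\kappa)/\bar q$ so that $\prod_{i}(C b^{i})^{1/\gamma_{i+1}}$ is at most $C(n,p,q,s,\Lambda)^{1/\bar q}(1-\theta)^{-\mu/\bar q}$ with $\mu=\mu(\kappa,p)$, and finally Jensen's inequality at the two ends ($(\fint u^{\gamma_{0}})^{1/\gamma_{0}}\leq(\fint u^{\bar q})^{1/\bar q}$ since $\gamma_{0}\leq\bar q$, and $(\fint u^{q})^{1/q}\leq(\fint u^{\gamma_{N}})^{1/\gamma_{N}}$ since $\gamma_{N}\geq q$), gives exactly \eqref{rev}.

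\smallskip
\noindent\emph{Main obstacle.} The delicate part is Step 1: treating the time-derivative term rigorously for the singular test function via the mollification of Remark \ref{Molifier} together with the ``vary the top endpoint'' trick, and --- above all --- the sign splitting of the nonlocal tail, so that its ``bad'' half is absorbed into a local $\int u^{\gamma}$ and no $\Tail_{\infty}$ survives in \eqref{rev}. A secondary difficulty is the bookkeeping of constants through the iteration: keeping $M(\gamma_{i})$ uniform (which it is, away from the endpoint $\gamma=p-1$), controlling its blow-up as $\gamma_{N-1}\uparrow p-1$ by a power of $(q_{0}-q)^{-1}$ --- the source of the dependence of $C$ on $q$ --- and using the rescaling $(x,t)\mapsto(x_{0}+rx,\,t_{0}+r^{sp}t)$ to remove $r$ and collapse everything into the stated form $C/(1-\theta)^{\mu}$.
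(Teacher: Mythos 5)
Your proposal is correct and follows essentially the same route as the paper: the singular test function $\phi=\tau^{p}u^{-\epsilon}$, the algebraic inequality of Lemma \ref{Inequality1} to produce the fractional seminorm of $u^{\gamma/p}$, the sign-splitting of the tail so that only a local $\int u^{\gamma}$ survives, the parabolic Sobolev inequality, and a finite Moser iteration with the same bookkeeping of constants (uniformity as $\bar q\to 0$, blow-up only as $\gamma\uparrow p-1$, absorbed into the $q$-dependence of $C$). These are exactly the ingredients of the paper's Lemma \ref{supenergy1} and the subsequent iteration in the proof of Theorem \ref{revholder}.
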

\begin{Remark}
We would like to emphasize that the constant  $C$ in the  reverse H\"older inequality  \eqref{rev} above is independent of $\bar{q} $ as $\bar{q} \to 0$ and this is precisely where the algebraic lemma \ref{Inequality1} plays a crucial role. It is well known that such a  stable behaviour of the constant $C$ is needed in order  to establish  the Harnack inequality for local equations using the approach of Bombieri as in \cite{Bo} (see also \cite{Kin-Kuusi} for an adaptation of such an idea in the case of \eqref{pLap}). We therefore believe that such a reverse H\"older inequality will have similar future applications in the nonlocal case.
\end{Remark}

\section{Energy estimate}
To prove Theorem \ref{Localbound1}, we need  the following Caccioppoli type estimate for subsolutions.
\begin{Lemma}\label{Subenergyestimate}
Let $p>2$, $x_0\in\mathbb{R}^n$, $0<\tau_1<\tau_2$ and $l>0$ with $(\tau_1-l,\tau_2)\subset(0,T)$. 
Assume that $u$ is a weak subsolution of \eqref{maineqn} with
$u>0\text{ in }\mathbb{R}^n\times (\tau_1-l,\tau_2)$.
Let $k\in\mathbb{N}$ and denote $w(x,t)=(u-k)_{+}(x,t)$. Then there exists a positive constant {$C=C(n,p,s,\Lambda)$} such that 
\begin{align*}
&\int_{\tau_1-l}^{\tau_2}\int_{B_r}\int_{B_r}|w(x,t)\psi(x)-w(y,t)\psi(y)|^p \eta(t)^{p}\,d\mu\,dt
+C\sup_{\tau_1<t<\tau_2}\int_{B_r}w(x,t)^p\psi(x)^p\,dx\\&\leq\int_{\tau_1-l}^{\tau_2}\int_{B_r}\int_{B_r}|w(x,t)\psi(x)-w(y,t)\psi(y)|^p \eta(t)^{p}\,d\mu\,dt
+C\sup_{\tau_1<t<\tau_2}\int_{B_r}\xi(w)(x,t)\psi(x)^p\,dx\\
&\leq C\Bigg(\int_{\tau_1-l}^{\tau_2}\int_{B_r}\int_{B_r}{\max\{w(x,t),w(y,t)\}^p|\psi(x)-\psi(y)|^p}\eta(t)^p\,d\mu\,dt\\
&\qquad+\Big(\sup_{x\in\mathrm{supp}\,\psi,\,\tau_1-l<t<\tau_2}\int_{{\mathbb{R}^n\setminus B_r}}{\frac{w(y,t)^{p-1}}{|x-y|^{n+ps}}}\,dy\Big)
\int_{\tau_1-l}^{\tau_2}\int_{B_r}w(x,t)\psi(x)^p\eta(t)^p\,dx\,dt\\
&\qquad+\int_{\tau_1-l}^{\tau_2}\int_{B_r}\xi(w)\psi(x)^p\partial_t\eta(t)^p\,dx\,dt\Bigg),
\end{align*}
for all nonnegative $\psi\in C_{0}^{\infty}(B_r)$ and nonnegative $\eta\in C^{\infty}(\mathbb{R})$ such that $\eta(t)=0$ for $t\leq \tau_1-l$ and $\eta(t)=1$ for $t\geq \tau_1$.
\end{Lemma}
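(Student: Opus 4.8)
\medskip
\noi\textbf{Proof plan.}
The plan is to test the weak formulation of \eqref{maineqn} with the function $\phi=w\psi^p\eta^p$, where $w=(u-k)_+$, and to estimate the parabolic and the nonlocal contributions separately. First I would use the time mollification of Remark \ref{Molifier} to make this choice rigorous; the feature of the doubly nonlinear structure we exploit is the identity $\partial_t(u^{p-1})\cdot w=\partial_t\xi(w)$, which is immediate from the definition of $\xi$ by differentiating under the integral sign. The parabolic term then becomes $\int\!\!\int\partial_t\xi(w)\,\psi^p\eta^p\,dx\,dt$; integrating by parts in time, using a piecewise linear modification of $\eta$ that cuts off at a generic time $\tau^*\in(\tau_1,\tau_2)$ and letting the width of that cut-off tend to zero (a standard device, cf. \cite{Verenacontinuity,KLin}), and using $\eta(\tau_1-l)=0$ to kill the lower boundary term, one arrives at
\[
\int_{B_r}\xi(w)(x,\tau^*)\,\psi(x)^p\,dx+N_{\le\tau^*}\le\int_{\tau_1-l}^{\tau_2}\int_{B_r}\xi(w)\,\psi(x)^p\,\partial_t(\eta(t)^p)\,dx\,dt,
\]
where $N_{\le\tau^*}$ denotes the nonlocal part of the weak formulation with the time integral restricted to $(\tau_1-l,\tau^*)$.

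Next I would analyse the nonlocal term. Since $\psi$ is supported in $B_r$, splitting $\R^n\times\R^n$ into $B_r\times B_r$ and the two mixed pieces $B_r\times(\R^n\setminus B_r)$ and $(\R^n\setminus B_r)\times B_r$ (the complementary square contributes nothing, since $\phi\equiv0$ there) and using the symmetry of $K$, one writes $N=I_{\mathrm{loc}}+2I_{\mathrm{tail}}$ with
\[
I_{\mathrm{loc}}=\int_{\tau_1-l}^{\tau_2}\eta(t)^p\int_{B_r}\int_{B_r}\mathcal{A}(u(x,y,t))\big(w(x,t)\psi(x)^p-w(y,t)\psi(y)^p\big)\,d\mu\,dt
\]
and $I_{\mathrm{tail}}$ the corresponding integral over $B_r\times(\R^n\setminus B_r)$. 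For the tail term I would use that $u>0$, that $w(x,t)>0$ forces $u(x,t)>k$ (so that, when $u(y,t)>u(x,t)$, we also have $u(y,t)>k$ and hence $(u(y,t)-u(x,t))_+\le u(y,t)-k=w(y,t)$), together with the upper bound in \eqref{kernel}, to obtain the pointwise inequality $(u(y,t)-u(x,t))_+^{p-1}\,w(x,t)\le w(y,t)^{p-1}\,w(x,t)$ and therefore
\[
-2I_{\mathrm{tail}}\le C\Lambda\Big(\sup_{x\in\mathrm{supp}\,\psi,\;\tau_1-l<t<\tau_2}\int_{\R^n\setminus B_r}\frac{w(y,t)^{p-1}}{|x-y|^{n+ps}}\,dy\Big)\int_{\tau_1-l}^{\tau_2}\int_{B_r}w(x,t)\psi(x)^p\eta(t)^p\,dx\,dt,
\]
which is exactly the second term on the right-hand side of the claimed estimate.

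For the local term the heart of the matter is a pointwise algebraic inequality: for $a,b>0$ and $0\le\psi_x,\psi_y$, writing $w_a=(a-k)_+$ and $w_b=(b-k)_+$,
\[
|a-b|^{p-2}(a-b)\big(w_a\psi_x^p-w_b\psi_y^p\big)\ge\tfrac{1}{C}\,\big|w_a\psi_x-w_b\psi_y\big|^p-C\,\max\{w_a,w_b\}^p\,|\psi_x-\psi_y|^p.
\]
I would prove this by the case analysis of Di Castro, Kuusi and Palatucci (cf. \cite{Kuusilocal}, and \cite{Martinlocal} in the parabolic setting): one may assume $w_a\ge w_b$, whence $a\ge b$, so $|a-b|^{p-2}(a-b)=(a-b)^{p-1}\ge(w_a-w_b)^{p-1}$ because $0\le w_a-w_b\le a-b$; then one distinguishes the case $\psi_x\ge\psi_y$, where $w_a\psi_x^p-w_b\psi_y^p\ge\psi_y^p(w_a-w_b)$ and the inequality follows from convexity after writing $w_a\psi_x-w_b\psi_y=\psi_y(w_a-w_b)+w_a(\psi_x-\psi_y)$, from the case $\psi_x<\psi_y$, where one uses Lemma \ref{epsilon} to replace $\psi_y^p$ by $(1+\delta)\psi_x^p+C(\delta)|\psi_x-\psi_y|^p$ and splits further according to whether $w_a\ge(1+\delta)w_b$ or not. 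Integrating against $\eta(t)^p\,d\mu\,dt$ over $B_r\times B_r\times(\tau_1-l,\tau^*)$ then gives $I_{\mathrm{loc}}\ge\tfrac1C\,G_{\le\tau^*}-C\,R_1$, where $G$ is the first term on the left of the claimed estimate and $R_1$ its first term on the right. Inserting these bounds into the inequality of the first paragraph, discarding nonnegative terms, and finally taking $\sup_{\tau^*\in(\tau_1,\tau_2)}$ in the $\xi(w)$-term while letting $\tau^*\to\tau_2$ in the remaining terms, yields the second inequality in the statement; the first inequality is then immediate from $w^p\le\lambda\,\xi(w)$, a consequence of Lemma \ref{Auxfnlemma} since $(u+k)^{p-2}\ge(u-k)_+^{p-2}$ for $p\ge2$.

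\medskip
\noi\emph{Main obstacle.} The principal difficulty is the pointwise algebraic inequality for $I_{\mathrm{loc}}$, and within it the subcase $\psi_x<\psi_y$ with $w_a$ and $w_b$ comparable: there $w_a\psi_x^p-w_b\psi_y^p$ need not be nonnegative, and one has to balance the smallness of $|w_a-w_b|$ against the error term produced by Lemma \ref{epsilon}. A second, purely technical, hurdle is the rigorous use of the solution-dependent test function via the mollification of Remark \ref{Molifier} together with the extraction of the $\sup_t$ term, which is routine but must be carried out with care.
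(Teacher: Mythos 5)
Your proposal follows the same route as the paper: test with $\phi=w\psi^p\eta^p$ made rigorous via the time mollification and a piecewise-linear cutoff $\zeta_\epsilon$, use the identity $w\,\partial_t(u^{p-1})=\partial_t\xi(w)$ for the parabolic part, split the nonlocal term into $B_r\times B_r$ and the tail, run the Di~Castro--Kuusi--Palatucci pointwise estimate for the local part, estimate the tail with $(u(y,t)-u(x,t))_+\le w(y,t)$ on the relevant set, and extract the supremum by choosing the upper cut-off time appropriately. The first inequality via $w^p\le\lambda\,\xi(w)$ and Lemma~\ref{Auxfnlemma}, and the tail bound, are exactly as in the paper.

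The one place where your plan does not quite close is the subcase you yourself flag as the main obstacle: for $\psi_x<\psi_y$ and $w_a\ge w_b$, a \emph{fixed} $\delta$ in Lemma~\ref{epsilon} together with the dichotomy $w_a\ge(1+\delta)w_b$ versus $w_a<(1+\delta)w_b$ does not yield the desired bound. In the borderline regime $w_b\le w_a<(1+\delta)w_b$ the term $(w_a-w_b)^{p-1}(w_a-(1+\delta)w_b)\psi_x^p$ can be negative of size comparable to $\delta^{p}w_b^p\psi_x^p$, which is not controlled by $\max\{w_a,w_b\}^p|\psi_x-\psi_y|^p$, and the remaining positive information is lost. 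The paper avoids this by choosing $\epsilon$ \emph{adaptively} in Lemma~\ref{epsilon}, namely
\[
\epsilon=\frac{1}{\max\{1,2C(p)\}}\,\frac{w(x,t)-w(y,t)}{w(x,t)}\in(0,1],
\]
which calibrates the error term so that after multiplying by $(w_a-w_b)^{p-1}w_a$ one is left with exactly $-\tfrac12(w_a-w_b)^p\max\{\psi_x,\psi_y\}^p$ and an error $\lesssim\max\{w_a,w_b\}^p|\psi_x-\psi_y|^p$, uniformly in all regimes; this is inequality~\eqref{subballpartestimate1}, which combined with~\eqref{subballpartestimate2} gives the pointwise bound you state. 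So your outline is correct and identical in strategy, but the resolution of the subcase you isolate requires the $w$-dependent choice of $\epsilon$ rather than a fixed $\delta$.
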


\begin{proof}
Since $p>2$, we observe that the first inequality  i.e.
\begin{align*}
&\int_{\tau_1-l}^{\tau_2}\int_{B_r}\int_{B_r}|w(x,t)\psi(x)-w(y,t)\psi(y)|^p \eta(t)^{p}\,d\mu\,dt
+C\sup_{\tau_1<t<\tau_2}\int_{B_r}w(x,t)^p\psi(x)^p\,dx\\&\leq\int_{\tau_1-l}^{\tau_2}\int_{B_r}\int_{B_r}|w(x,t)\psi(x)-w(y,t)\psi(y)|^p \eta(t)^{p}\,d\mu\,dt
+C\sup_{\tau_1<t<\tau_2}\int_{B_r}\xi(w)(x,t)\psi(x)^p\,dx
\end{align*}
follows directly from Lemma \ref{Auxfnlemma}. Therefore, it is enough to prove the second inequality. 

Let $t_1=\tau_1-l$ and $t_2=\tau_2$ and for fixed $t_1<l_1<l_2<t_2$ and $\epsilon>0$ small enough, following \cite{Verenacontinuity} we define the function $\zeta_{\epsilon}\in W^{1,\infty}\big((t_1,t_2),[0,1]\big)$ by
\[
  \zeta_{\epsilon}(t) :=
  \begin{cases}
    0 & \text{ for } t_1\leq t\leq l_1-\epsilon,\\
    1+\frac{t-l_1}{\epsilon} & \text{for }l_1-\epsilon<t\leq l_1, \\
    1, & \text{for } l_1<t\leq l_2, \\
    1-\frac{t-l_{2}}{\epsilon}, & \text{for }l_2<t\leq l_2+\epsilon, \\
    0, & \text{for } l_2 +\epsilon<t\leq t_2,
  \end{cases}
\]

and we choose 
\[
\phi(x,t)=w(x,t)\psi(x)^p\zeta_{\epsilon}(t)\eta(t)^p
\] 
as a test function in \eqref{weaksubsupsoln}. We denote 
\[
v_{h}^{p-1}=(u^{p-1})_h
\quad\text{and}\quad
\mathcal{V}(u(x,y,t))=\mathcal{A}(u(x,y,t))K(x,y,t).
\] 
Then following \cite{Verenacontinuity,KLin}, we observe that the subsolution $u$ of \eqref{maineqn} satisfies the following mollified inequality
\begin{equation}\label{mollified}
I_{h,\epsilon}+J_{h,\epsilon}\leq 0,
\end{equation}
where
$$
I_{h,\epsilon}=\int_{t_1}^{t_2}\int_{B_r}\partial_t{v_{h}^{p-1}}\phi(x,t)\,dx\,dt
=\int_{t_1}^{t_2}\int_{B_r}\partial_t{v_{h}^{p-1}}w(x,t)\psi(x)^p\zeta_{\epsilon}(t)\eta(t)^p\,dx\, dt,
$$
and
\begin{align*}
J_{h,\epsilon}&=\int_{t_1}^{t_2}\int_{\mathbb{R}^n}\int_{\mathbb{R}^n}\big(\mathcal{V}(u(x,y,t))\big)_{h}(\phi(x,t)-\phi(y,t))\,dx\,dy\,dt\\
&=\int_{t_1}^{t_2}\int_{\mathbb{R}^n}\int_{\mathbb{R}^n}\big(\mathcal{V}(u(x,y,t))\big)_{h}\big(w(x,t)\psi(x)^p-w(y,t)\psi(y)^p\big)\zeta_{\epsilon}(t)\eta(t)^p\,dx\,dy\,dt.
\end{align*}
\textbf{Estimate of $I_{h,\epsilon}$:} Proceeding similarly as in the proof of \cite[Proposition 3.1]{Verenacontinuity}, we have 
\begin{equation}\label{Ihpre}
\begin{split}
\lim_{\epsilon\to 0}\lim_{h\to 0}I_{h,\epsilon}\geq \int_{B_r}\xi(w)(x,l_2)\psi(x)^p\eta^p(l_2)\,dx-\int_{B_r}\xi(w)(x,l_1)\psi(x)^p\eta(l_1)^p\,dx\\
-\int_{l_1}^{l_2}\int_{B_r}\xi(w)(x,t)\psi(x)^p\partial_t\eta(t)^p\,dx\, dt.
\end{split}
\end{equation}

\textbf{Estimate of $J_{h,\epsilon}$:} First we claim that 
$
\lim_{h\to 0}J_{h,\epsilon}=J_{\epsilon},
$
where
$$
J_{\epsilon}=\int_{t_1}^{t_2}\int_{\mathbb{R}^n}\int_{\mathbb{R}^n}\mathcal{V}(u(x,y,t))\big(w(x,t)\psi(x)^p-w(y,t)\psi(y)^p\big)\zeta_{\epsilon}(t)\eta(t)^p\,dx\,dy\,dt.
$$
Indeed, we can write 
\begin{equation}\label{nonlocal}
J_{h,\epsilon}-J_{\epsilon}=L_{h,\epsilon}+N_{h,\epsilon},
\end{equation}
where 
$$
L_{h,\epsilon}=\int_{t_1}^{t_2}\int_{B_r}\int_{B_r}\big(\big(\mathcal{V}(u(x,y,t))\big)_h-\mathcal{V}(u(x,y,t))\big)\big(w(x,t)\psi(x)^p-w(y,t)\psi(y)^p\big)\zeta_{\epsilon}(t)\eta(t)^p\,dx\,dy\,dt,
$$
and
$$
N_{h,\epsilon}=2 \int_{t_1}^{t_2}\int_{B_r}\int_{\mathbb{R}^n\setminus B_r}\big(\big(\mathcal{V}(u(x,y,t))\big)_h-\mathcal{V}(u(x,y,t))\big)w(x,t)\psi(x)^p\zeta_{\epsilon}(t)\eta(t)^p\,dx\,dy\,dt.
$$
\textbf{Estimate of $L_{h,\epsilon}$:} We can rewrite $L_{h,\epsilon}$ as
\begin{align*}
L_{h,\epsilon}=\int_{t_1}^{t_2}\int_{B_r}\int_{B_r}\big(\big(\mathcal{V}(u(x,y,t))\big)_h-\mathcal{V}(u(x,y,t))\big)
\frac{\big(w(x,t)\psi(x)^p-w(y,t)\psi(y)^p\big)\zeta_{\epsilon}(t)\eta(t)^p}{|x-y|^{-\frac{(n+ps)}{p}}|x-y|^\frac{n+ps}{p}}\,dx\,dy\,dt,
\end{align*}
and using H\"older's inequality with exponents $p'=\frac{p}{p-1}$ and $p$, we obtain
\begin{equation}\label{loclimit}
\begin{split}
L_{h,\epsilon}\leq\Big(\int_{t_1}^{t_2}\int_{B_r}\int_{B_r}\Big|\big(\big(\mathcal{V}(u(x,y,t))\big)_h-\mathcal{V}(u(x,y,t))\big)|x-y|^\frac{n+ps}{p}\Big|^{p'}\,dx\,dy\,dt\Big)^\frac{1}{p'}\\
\cdot\Big(\int_{t_1}^{t_2}\int_{B_r}\int_{B_r}\frac{\big|\big(w(x,t)\psi(x)^p-w(y,t)\psi(y)^p\big)\zeta_{\epsilon}(t)\eta(t)^p\big|^p}{|x-y|^{n+ps}}\,dx\,dy\,dt\Big)^\frac{1}{p}.
\end{split}
\end{equation}
Now using the property \eqref{kernel}, we observe that, 
$$
|x-y|^\frac{n+ps}{p}|\mathcal{V}(u(x,y,t)|\leq\Lambda\frac{|u(x,t)-u(y,t)|^{p-1}}{|x-y|^\frac{n+ps}{p'}}\in L^{p'}((t_1,t_2)\times B_r\times B_r),
$$
From \cite[Lemma 2.2]{KLin}, we have
$$
\big(\big(\mathcal{V}(u(x,y,t))\big)_h-\mathcal{V}(u(x,y,t))\big)|x-y|^\frac{n+ps}{p}\to 0 \text{ in } L^{p'}((t_1,t_2)\times B_r\times B_r),
$$
and therefore from \eqref{loclimit}, it follows that  $\lim_{h\to 0}L_{h,\epsilon}=0$.\\
\textbf{Estimate of $N_{h,\epsilon}$:} We note that given the pointwise convergence of mollified functions together with the fact that $u\in L^{\infty}((t_1,t_2);L^{\infty}(\mathbb{R}^n))$, we can therefore apply the Lebesgue dominated convergence theorem to conclude that $\lim_{h\to 0}N_{h,\epsilon}=0$.\\
\textbf{Estimate of $J_{\epsilon}$:} We can rewrite $J_{\epsilon}=J^{1}_{\epsilon}+J^{2}_{\epsilon}$, where
\[
J^{1}_{\epsilon}=\int_{t_1}^{t_2}\int_{B_r}\int_{B_r}\mathcal{A}(u(x,y,t))(w(x,t)\psi(x)^p-w(y,t)\psi(y)^p)\zeta_{\epsilon}(t)\eta(t)^p\,d\mu\,dt,
\]
and
\[
J^{2}_{\epsilon}=2\int_{t_1}^{t_2}\int_{\mathbb{R}^n\setminus B_r}\int_{B_r}\mathcal{A}(u(x,y,t))w(x,t)\psi(x)^p\zeta_{\epsilon}(t)\eta(t)^p\,d\mu\,dt.
\]
\textbf{Estimate of $J^{1}_{\epsilon}$:} To estimate the integral $J^{1}_{\epsilon}$, we mainly adapt an idea from the  proof of \cite[Theorem 1.4]{Kuusilocal}. By symmetry we may assume $u(x,t)\geq u(y,t)$. In this case, for every fixed $t$, we observe that
\begin{align*}
&|u(x,t)-u(y,t)|^{p-2}(u(x,t)-u(y,t))\big(w(x,t)\psi(x)^p-w(y,t)\psi(y)^p\big)\eta(t)^p\\
&=(u(x,t)-u(y,t))^{p-1}\big(w(x,t)\psi(x)^p-w(y,t)\psi(y)^p\big)\eta(t)^p\\
&=\begin{cases}
(w(x,t)-w(y,t))^{p-1}\big(w(x,t)\psi(x)^p-w(y,t)\psi(y)^p\big)\eta(t)^p,\text{ if }u(x,t),u(y,t)>k,\\
(u(x,t)-u(y,t))^{p-1}w(x,t)\psi(x)^p\eta(t)^p,\text{ if }u(x,t)>k,\,u(y,t)\leq k,\\
0,\text{ otherwise. }
\end{cases}
\end{align*}
Thus 
\begin{align*}
&|u(x,t)-u(y,t)|^{p-2}(u(x,t)-u(y,t))\big(w(x,t)\psi(x)^p-w(y,t)\psi(y)^p\big)\eta(t)^p\\
&\qquad\geq |w(x,t)-w(y,t)|^{p-1}(w(x,t)\psi(x)^p-w(y,t)\psi(y)^p)\eta(t)^p.
\end{align*}
This implies,
\[
J^{1}_{\epsilon}\geq \int_{t_1}^{t_2}\int_{B_r}\int_{B_r}(w(x,t)-w(y,t))^{p-1}(w(x,t)\psi(x)^p-w(y,t)\psi(y)^p)\zeta_{\epsilon}(t)\eta(t)^p\,d\mu\, dt.
\]
Let us now consider the case when $w(x,t)>w(y,t)$ and $\psi(x)\leq\psi(y)$. By Lemma \ref{epsilon} we obtain
\begin{equation}\label{epsilonequationinequality}
\psi(x)^p\geq (1-C(p)\epsilon)\psi(y)^p-(1+C(p)\epsilon)\epsilon^{1-p}|\psi(x)-\psi(y)|^p
\end{equation}
for any $\epsilon\in(0,1]$ where $C(p)=(p-1)\Gamma(\max\{1,p-2\})$. Now by letting
$$
\epsilon=\frac{1}{\max\{1,2C(p)\}}\frac{w(x,t)-w(y,t)}{w(x,t)}\in(0,1],
$$
we deduce from above that the following inequality holds for some positive constant $C=C(p)$,
\begin{align*}
&(w(x,t)-w(y,t))^{p-1}w(x,t)\psi(x)^p
\geq (w(x,t)-w(y,t))^{p-1}w(x,t)\max\{\psi(x),\psi(y)\}^p\\
&\qquad-\frac{1}{2}(w(x,t)-w(y,t))^p\max\{\psi(x),\psi(y)\}^p-C\max\{w(x,t),w(y,t)\}^p|\psi(x)-\psi(y)|^p.
\end{align*}
Note that over here, we  used that under the assumption $\psi(x)\leq\psi(y)$, we have $\max\{\psi(x),\psi(y)\}=\psi(y)$. In the other cases $w(x,t)\geq w(y,t)$, $\psi(x)\geq\psi(y)$ or $w(x,t)=w(y,t)$, the above estimate is clear. Therefore, when $w(x,t)\geq w(y,t)$, we have
\begin{equation}\label{subballpartestimate1}
\begin{split}
&(w(x,t)-w(y,t))^{p-1}(w(x,t)\psi(x)^p-w(y,t)\psi(y)^p)\\
&\geq (w(x,t)-w(y,t))^{p-1}(w(x,t)\max\{\psi(x),\psi(y)\}^p-w(y,t)\psi(y)^p)\\
&-\frac{1}{2}(w(x,t)-w(y,t))^p\max\{\psi(x),\psi(y)\}^p
-C\max\{w(x,t),w(y,t)\}^p|\psi(x)-\psi(y)|^p\\
&\geq \frac{1}{2}(w(x,t)-w(y,t))^p\max\{\psi(x),\psi(y)\}^p
-C\max\{w(x,t),w(y,t)\}^p|\psi(x)-\psi(y)|^p.
\end{split}
\end{equation}
If $w(x,t)<w(y,t)$, we may interchange the roles of $x$ and $y$ above to obtain \eqref{subballpartestimate1}. We then observe that 
\begin{equation}\label{subballpartestimate2}
\begin{gathered}
|w(x,t)\psi(x)-w(y,t)\psi(y)|^p\leq 2^{p-1}|w(x,t)-w(y,t)|^{p}\max\{\psi(x),\psi(y)\}^p\\
+2^{p-1}\max\{w(x,t),w(y,t)\}^p|\psi(x)-\psi(y)|^p.
\end{gathered}
\end{equation}
Now \eqref{subballpartestimate1} and \eqref{subballpartestimate2} gives 
\begin{equation}\label{subballpartestimate3}
\begin{split}
J_{\epsilon}^{1}&\geq c\int_{t_1}^{t_2}\int_{B_r}\int_{B_r}|w(x,t)\psi(x)-w(y,t)\psi(y)|^p\zeta_{\epsilon}(t)\eta(t)^p\,d\mu\, dt\\
&\qquad-C\int_{t_1}^{t_2}\int_{B_r}\int_{B_r}\max\{w(x,t),w(y,t)\}^p|\psi(x)-\psi(y)|^p\zeta_{\epsilon}(t)\eta(t)^p\,d\mu\, dt,
\end{split}
\end{equation}
for some positive constants $c=c(p), C=C(p)$.\\
\textbf{Estimate of $J_{\epsilon}^{2}$:} To estimate $J_{\epsilon}^{2}$, we observe that
\begin{align*}
|u(x,t)-u(y,t)|^{p-2}(u(x,t)-u(y,t))w(x,t)
&\geq -(u(y,t)-u(x,t))^{p-1}w(x,t)\\
&\geq -(u(y,t)-k)^{p-1}_{+}w(x,t)\\
&\geq -w(y,t)^{p-1}w(x,t). 
\end{align*}
As a consequence, we obtain,
\begin{equation}\label{subballpartestimate4}
\begin{split}
J_{\epsilon}^{2}&\geq -\int_{t_1}^{t_2}\int_{\mathbb{R}^n\setminus B_r}\int_{B_r}K(x,y,t)w(y,t)^{p-1}w(x,t)\psi(x)^p\zeta_{\epsilon}(t)\eta(t)^p\,dx\,dy\,dt\\
&\geq -\Lambda\Big(\sup_{{t_1<t<t_2},\,x\in\mathrm{supp}\,\psi}\int_{\mathbb{R}^n\setminus B_r}\frac{w(y,t)^{p-1}}{|x-y|^{n+ps}}\,dy\Big)
\int_{t_1}^{t_2}\int_{B_r}w(x,t)\psi(x)^p\zeta_{\epsilon}(t)\eta(t)^p\,dx\,dt.
\end{split}
\end{equation}
Therefore from \eqref{subballpartestimate3} and \eqref{subballpartestimate4}, we obtain for some positive constants $c=c(p)$ and $C=C(p)$, 
\begin{equation}\label{jlimit}
\begin{split}
&\lim_{\epsilon\to 0}\lim_{h\to 0}J_{h,\epsilon}
=\lim_{\epsilon\to 0}J_{\epsilon}
=\lim_{\epsilon\to 0}(J_{\epsilon}^{1}+J_{\epsilon}^{2})\\
&\geq c\int_{l_1}^{l_2}\int_{B_r}\int_{B_r}|w(x,t)\psi(x)-w(y,t)\psi(y)|^p\eta(t)^p\,d\mu\, dt\\
&\qquad-C\int_{l_1}^{l_2}\int_{B_r}\int_{B_r}\max\{w(x,t),w(y,t)\}^p|\psi(x)-\psi(y)|^p\eta(t)^p\,d\mu\,dt\\
&\qquad-\Lambda\Big(\sup_{{t_1<t<t_2},\,x\in\mathrm{supp}\,\psi}\int_{\mathbb{R}^n\setminus B_r}\frac{w(y,t)^{p-1}}{|x-y|^{n+ps}}\,dy\Big)
\int_{l_1}^{l_2}\int_{B_r}w(x,t)\psi(x)^p\eta(t)^p\,dx\,dt.
\end{split}
\end{equation}
Now employing the estimates \eqref{Ihpre} and \eqref{jlimit} into \eqref{mollified} and then first letting $l_1\to t_1$ and then  by $l_2\to t_2$, we get
\begin{equation}\label{seminorm1}
\begin{split}
&\int_{t_1}^{t_2}\int_{B_r}\int_{B_r}|w(x,t)\psi(x)-w(y,t)\psi(y)|^p \eta(t)^{p}\,d\mu\,dt\\
&\leq C\Bigg(\int_{t_1}^{t_2}\int_{B_r}\int_{B_r}{\max\{w(x,t),w(y,t)\}^p|\psi(x)-\psi(y)|^p}\eta(t)^p\,d\mu\,dt\\
&\qquad+\Big(\sup_{x\in\mathrm{supp}\,\psi,\,t_1<t<t_2}\int_{{\mathbb{R}^n\setminus B_r}}{\frac{w(y,t)^{p-1}}{|x-y|^{n+ps}}}\,dy\Big)
\int_{t_1}^{t_2}\int_{B_r}w(x,t)\psi(x)^p\eta(t)^p\,dx\,dt\\
&\qquad+\int_{t_1}^{t_2}\int_{B_r}\xi(w)\psi(x)^p\partial_t\eta(t)^p\,dx\,dt\Bigg).
\end{split}
\end{equation}
Again using \eqref{Ihpre} and \eqref{jlimit} and then first letting $l_1\to t_1$ and then  by choosing $l_2 \in (\tau_1, \tau_2)$ such that
\[
\int_{B_r}\xi(w)(x,l_2)\psi(x)^p\,dx \geq \frac{1}{2} \sup_{\tau_1 < t< \tau_2} \int_{B_r}\xi(w)(x,t)\psi(x)^p\,dx,
\]
  we observe that
\begin{equation}\label{parabolicnorm1}
\begin{split}
&\sup_{\tau_1 < t< \tau_2} \int_{B_r}\xi(w)(x,t)\psi(x)^p\,dx\\
&\leq C\Bigg(\int_{t_1}^{t_2}\int_{B_r}\int_{B_r}{\max\{w(x,t),w(y,t)\}^p|\psi(x)-\psi(y)|^p}\eta(t)^p\,d\mu\,dt\\
&\qquad+\Big(\sup_{x\in\mathrm{supp}\,\psi,\,t_1<t<t_2}\int_{{\mathbb{R}^n\setminus B_r}}{\frac{w(y,t)^{p-1}}{|x-y|^{n+ps}}}\,dy\Big)
\int_{t_1}^{t_2}\int_{B_r}w(x,t)\psi(x)^p\eta(t)^p\,dx\,dt\\
&\qquad+\int_{t_1}^{t_2}\int_{B_r}\xi(w)\psi(x)^p\partial_t\eta(t)^p\,dx\,dt\Bigg).
\end{split}
\end{equation}
Now from \eqref{seminorm1} and \eqref{parabolicnorm1}, we get the required estimate.
\end{proof}

\section{Proof of Theorem \ref{Localbound1}}
Let us first assume that  $sp<n$ and for $j\in\mathbb{N}$, we denote by 
$$
r_0=r,
\quad r_j =\frac{1+2^{-j}}{2}r,
\quad s_j =\frac{r_j+r_{j+1}}{2},
$$
and
$$
B_j=B_{r_j}(x_0), 
\quad \bar{B}_j=B_{s_j}(x_0),
\quad \Gamma_j=(t_0-r_j^{ps},t_0),
\quad \bar{\Gamma}_j=(t_0-{s}_j^{ps},t_0).
$$
Moreover, for $\bar{k}>0$ to be chosen later, we let
$$
k_j=(1-2^{-j})\bar{k},\quad\bar{k}_j=\frac{k_{j+1}+k_j}{2},\quad w_j=(u-k_j)_{+}
\quad\text{and}\quad
\bar{w}_j=(u-\bar{k}_j)_{+}.
$$
We observe that
$$
\bar{k}_j>k_j,
\quad w_j\geq\bar{w_j}
\quad\text{and}\quad
 w_j^p\geq(2^{-j-2}\bar{k})^{p-1}\bar{w_j}=(\bar{k}_j-k_j)^{p-1}\bar{w}_j.
$$
Moreover, we choose $\psi_j\in C_{0}^\infty(B_j),\,\eta_j\in C^{\infty}(\Gamma_j)$
such that $0\leq\psi_j\leq 1$ in $B_{j}$, $\psi_j\equiv 1$ on $B_{j+1}$, $|\nabla\psi_j|<\frac{2^{j+3}}{r}$ in $B_j$ and $0\leq\eta_j\leq 1$ in $\Gamma_{j}$, and $\eta_j\equiv 1$ on $\Gamma_{j+1}$ with $\eta_{j}=0$ on $\Gamma_{j}\setminus\bar{\Gamma}_{j}$ and  $|\partial_t\eta_j|\leq\frac{2^{jps}}{r^{ps}}\text{ in }\Gamma_j$. 
By Lemma \ref{parabolic embedding} with $\kappa=\frac{n+ps}{n}$ and $\kappa^{*}=\frac{n}{n-sp}$, we have
\begin{equation}\label{Embapp}
\begin{split}
&\int_{\Gamma_{j+1}}\fint_{B_{j+1}}|\bar{w_j}|^{p\kappa}\,dx\,dt\\
&\leq Cr_{j+1}^{sp-n}\int_{\Gamma_{j+1}}\int_{B_{j+1}}\int_{B_{j+1}}\frac{|\bar{w}_j(x,t)-\bar{w}_j(y,t)|^p}{|x-y|^{n+sp}}\,dx\,dy\,dt\cdot\Big(\sup_{\Gamma_{j+1}}\fint_{B_{j+1}}|\bar{w}_{j}|^p\,dx\Big)^\frac{ps}{n}\\
&\qquad+C\int_{\Gamma_{j+1}}\fint_{B_{j+1}}\bar{w}_j^p\,dx\,dt\cdot\Big(\sup_{\Gamma_{j+1}}\fint_{B_{j+1}}|\bar{w}_j|^p\,dx\Big)^\frac{ps}{n}\\
&=Cr_{j+1}^{sp-n}I_1\cdot\Big(\frac{I_2}{|B_{j+1}|}\Big)^\frac{ps}{n}+C\int_{\Gamma_{j+1}}\fint_{B_{j+1}}|\bar{w}_j|^p\,dx\,dt\cdot\Big(\frac{I_2}{|B_{j+1}|}\Big)^\frac{ps}{n},
\end{split}
\end{equation}
where
\begin{align*}
I_1=\int_{\Gamma_{j+1}}\int_{B_{j+1}}\int_{B_{j+1}}\frac{|\bar{w}_j(x,t)-\bar{w}_j(y,t)|^p}{|x-y|^{n+sp}}\,dx\,dy\,dt
\quad\text{and}\quad 
I_2=\sup_{\Gamma_{j+1}}\int_{B_{j+1}}|\bar{w}_j|^p\,dx.
\end{align*}
Let $U_j=B_j\times\Gamma_j$ and $\bar{U}_j=\bar{B}_{j}\times\bar{\Gamma}_j$.
Since $r_{j+1}<r_j,\,s_j<r_j$, we have $\bar{B_j}\subset B_j$, $\bar{\Gamma}_j\subset\Gamma_{j}$,
$B_{j+1}\subset B_j$ and $\Gamma_{j+1}\subset\Gamma_j$.
To estimate $I_1$ and $I_2$ we apply Lemma \ref{Subenergyestimate} with $r={r_j}$, $\tau_2=t_0$, $\tau_1=t_0-r_{j+1}^{ps}$, $l=s_j^{ps}-r_{j+1}^{ps}$ and $\phi_j(x,t)=\psi_j(x)\eta_{j}(t)$ with 
$\eta_j(t)=0\text{ if }t\leq\tau_1-l$ and $\eta_j(t)=1$\text{ if }$t\geq \tau_1$. Observing that
$B_{j+1}\subset\bar{B_j}$ and $\Gamma_{j+1}\subset\bar{\Gamma}_{j}$,
using Lemma \ref{Subenergyestimate}, for some positive constant $C=C(n,p,s,\Lambda)$ we get
\begin{equation}\label{I12}
\begin{split}
I_1+C\,I_2
&\leq\int_{\Gamma_j}\int_{B_j}\int_{B_j}|\bar{w}_j(x,t)\psi_j(x)-\bar{w}_j(y,t)\psi_j(y)|^p\eta_j(t)^p\,d\mu\,dt\\
&\qquad+C\sup_{\Gamma_{j+1}}\int_{B_j}\bar{w}_j(x,t)^{p}\psi_{j}(x)^p\,dx\\
&\leq C(J_1+J_2+J_3),
\end{split}
\end{equation}
where
$$
J_1=\int_{\Gamma_j}\int_{B_j}\int_{B_j}\max\{\bar{w}_j(x,t)^p,\bar{w}_j(y,t)^p\}|\psi_j(x)-\psi_{j}(y)|^p\eta_j(t)^p\,d\mu\,dt,
$$
$$
J_2=\sup_{t\in\Gamma_j,\,x\in\text{supp}\,\psi_j}\int_{\mathbb{R}^n\setminus B_j}
\frac{\bar{w}_j(y,t)^{p-1}}{|x-y|^{n+ps}}\,dy\int_{B_j}\bar{w}_j(x,t)\psi_j(x)^p\eta_j(t)^p\,dx,
$$
and
$$
J_3=\int_{\Gamma_j}\int_{B_j}\xi(\bar{w}_j)(x,t)\psi_j(x)^p\partial_t\eta_j(t)^p\,dx dt.
$$
Now we estimate each $J_i$, $i=1,2,3$ separately. \\
\textbf{Estimate of $J_1$:} Using $r_j<r$ and $\bar{w}_j\leq w_j$, we have
\begin{equation}\label{JI}
\begin{split}
J_1&=\int_{\Gamma_j}\int_{B_j}\int_{B_j}\max\{\bar{w}_j(x,t)^p,\bar{w}_j(y,t)^p\}|\psi_j(x)-\psi_{j}(y)|^p\eta_j(t)^p\,d\mu\,dt\\
&\leq C(n,p,s,\Lambda)\Big(\sup_{x\in B_j}\int_{B_j}\frac{|\psi_j(x)-\psi_j(y)|^p}{|x-y|^{n+sp}}\,dy\Big)\int_{\Gamma_j}\int_{B_j}\bar{w}_j(x,t)^p\,dx\,dt\\
&\leq C(n,p,s,\Lambda)\frac{2^{j(n+sp+p)}}{r_j^{sp}}\int_{\Gamma_j}\int_{B_j}w_{j}(x,t)^p\,dx\,dt.
\end{split}
\end{equation}
\textbf{Estimate of $J_2$:} Without loss of generality, we may assume $x_0=0$. Using the fact that $\bar{w}_j\leq w_0$, under the assumptions on $\psi_j$, we have for $x\in\text{supp}\,\psi_j$, and $y\in\mathbb{R}^n\setminus B_j$,
$$
\frac{1}{|x-y|}=\frac{1}{|y|}\frac{|x-(x-y)|}{|x-y|}\leq\frac{1}{|y|}\big(1+2^{j+3}\big)\leq\frac{2^{j+4}}{|y|}.
$$
This implies
\begin{equation}\label{subenergyJ2}
\begin{split}
J_2&=\sup_{t\in\Gamma_j,\,x\in\text{supp}\,\psi_j}\int_{\mathbb{R}^n\setminus B_j}\frac{\bar{w}_j(y,t)^{p-1}}{|x-y|^{n+ps}}\,dy
\int_{\Gamma_j}\int_{B_j}\bar{w}_j(x,t)\psi_j(x)^p\eta_j(t)^p\,dx\,dt\\
&\leq C\frac{2^{j(n+sp+p)}}{r^{sp}\bar{k}^{p-1}}\Tail_{\infty}^{p-1}(w_0;x_0,\frac{r}{2},t_0-r^{sp},t_0)\int_{\Gamma_j}\int_{B_j}w_j(x,t)^{p}\,dx\,dt\\
&\leq C\frac{2^{j(n+sp+p)}}{\delta^{p-1}r_{j}^{sp}}\int_{\Gamma_j}\int_{B_j}w_j(x,t)^{p}\,dx\,dt,
\end{split}
\end{equation}
where we also used the fact $\bar{w}_j\leq\Big(\frac{2^{j+2}}{\bar{k}}\Big)^{p-1}w_j^{p}$ and  also that  $\bar{k}$ would be chosen finally such that  $\bar{k}\geq\delta\,\Tail_{\infty}(w_0;x_0,\frac{r}{2},t_0-r^{sp},t_0)$.\\
\textbf{Estimate of $J_3$:} To estimate $J_3$, we first note that by  Lemma \ref{Auxfnlemma} and the fact that $p>2$ we have,
\begin{equation}\label{J3}
\begin{split}
J_3&=\int_{\Gamma_j}\int_{B_j}\xi(\bar{w}_j)(x,t)\psi_{j}(x)^p\partial_t\eta_{j}(t)^p\,dx\,dt\\
&\leq C(p)\int_{\Gamma_j}\int_{B_j}(\bar{w}_j(x,t)+\bar{k}_j)^{p-2}\bar{w}_j(x,t)^{2}\psi_j(x)^p|\partial_t\eta_j(t)^{p}|\,dx\,dt\\
&=J_4+J_5,
\end{split}
\end{equation}
where
$$
J_4=\int_{(\Gamma_j\times B_j)\cap\{0<u-\bar{k}_j<\bar{k}_j\}}(\bar{w}_j(x,t)+\bar{k}_j)^{p-2}\bar{w}_j(x,t)^{2}\psi_j(x)^p|\partial_t\eta_j(t)^{p}|\,dx\,dt,
$$
and
\begin{equation}\label{J5}
\begin{split}
J_5&=\int_{(\Gamma_j\times B_j)\cap{\{\bar{w}_j\geq \bar{k}_j}\}}(\bar{w}_j(x,t)+\bar{k}_j)^{p-2}\bar{w}_j(x,t)^{2}\psi_j(x)^p|\partial_t\eta_j(t)^{p}|\,dx\,dt\\
&\leq2^{p-2}\int_{(\Gamma_j\times B_j)\cap{\{\bar{w}_j\geq \bar{k}_j}\}}\bar{w}_j(x,t)^p\psi_j(x)^p|\partial_t\eta_j(t)^p|\,dx\,dt\\
&\leq C(p,s)\frac{2^{j(n+sp+p)}}{r_j^{ps}}\int_{\Gamma_j}\int_{B_j}w_j(x,t)^p\,dx\,dt,
\end{split}
\end{equation}
where to deduce the estimate \eqref{J5} we have again used the fact that $p>2$.\\
\textbf{Estimate of $J_4$:} Now we estimate $J_4$  by adapting  some ideas from \cite{Gvespri}. Indeed, we denote by $A_j=(\Gamma_j\times B_j)\cap\{0<u-\bar{k}_j<\bar{k}_j\}$ and using binomial expansion  we observe that,
\begin{equation}\label{J4}
\begin{split}
J_4&=\int_{A_j}(\bar{w}_j(x,t)+\bar{k}_j)^{p-2}\bar{w}_j(x,t)^{2}\psi_j(x)^p|\partial_t\eta_j(t)^{p}|\,dx\,dt\\
&=\sum_{d=0}^{\infty}\int_{A_j}\binom{p-2}{d}\bar{k}_j^{p}\left(\frac{\bar{w}_j(x,t)}{\bar{k}_j}\right)^{d+2}|\partial_t\eta_j(t)^{p}|\,dx\,dt\\
&=J_4^{1}+J_4^{2},
\end{split}
\end{equation}
where
$$
J_4^{1}=\sum_{d=0}^{[p-2]}\int_{A_j}\binom{p-2}{d}\bar{k}_j^{p}\left(\frac{\bar{w}_j(x,t)}{\bar{k}_j}\right)^{d+2}|\partial_t\eta_j(t)^{p}|\,dx\,dt,
$$
and
$$
J_4^{2}=\sum_{d=[p-2]+1}^{\infty}\int_{A_j}\binom{p-2}{d}\bar{k}_j^{p}\left(\frac{\bar{w}_j(x,t)}{\bar{k}_j}\right)^{d+2}|\partial_t\eta_j(t)^{p}|\,dx\,dt.
$$
\textbf{Estimate of $J_4^{1}$:} Let us estimate $J_4^{1}$ as follows. Using H$\ddot{\text{o}}$lder's inequality, we obtain 
$$
J_4^{1}\leq\sum_{d=0}^{[p-2]}\left|\binom{p-2}{d}\right|(\bar{k}_j)^{p-2-d}\Big(\int_{A_j}\bar{w}_j^{p}(\partial_t\eta^p_{j})^\frac{p}{d+2}\,dx\,dt\Big)^\frac{d+2}{p}|A_j|^{1-\frac{d+2}{p}}.
$$
Now, since $u\geq\bar{k}_j$ in $A_j$, we observe that,
$$
\int_{A_j}w_j^p\,dx\,dt\geq\Big(\bar{k}_j-k_j\Big)^p|A_j|=\Big(\frac{\bar{k}}{2^{j+2}}\Big)^p|A_j|.
$$
Therefore, we obtain
\begin{equation}\label{fact}
|A_j|\leq\Big(\frac{2^{j+2}}{\bar{k}}\Big)^p\int_{A_j}w_j^p\,dx\,dt.
\end{equation}
Now using  \eqref{fact} together with the fact  $\bar{w}_j\leq w_j$, $\bar{k}_j<\bar{k}$, $r_j<r$ and also by using  the bounds on  $|\partial_t \eta_j|$, we get
\begin{equation}\label{J41}
\begin{split}
J_4^{1}&\leq C(p)\sum_{d=0}^{[p-2]}\left|\binom{p-2}{d}\right|2^{jp}\Big(\int_{A_j}w_j^{p}|\partial_t \eta_j|^\frac{p}{d+2}\,dx\,dt\Big)^\frac{d+2}{p}
\Big(\int_{A_j}w_j^{p}\,dx\,dt\Big)^{1-\frac{d+2}{p}}\\
&\leq C(p)\frac{2^{jp(s+1)}}{r_j^{sp}}\int_{A_j}w_j^{p}\,dx dt.
\end{split}
\end{equation}
\textbf{Estimate of $J_4^{2}$:} Now since $\bar{w}_j<\bar{k}_j$, therefore for all $d\geq[p-2]+1$, we have that $\bar{w}_j^{d-[p-2]-1}\leq \bar{k}_j^{d-[p-2]-1}$. Thus  $\bar{k}_j^{p-2-d}\bar{w}_j^{d+2}\leq \bar{k}_{j}^{p-3-[p-2]}\bar{w}_j^{[p-2]+3}$ and consequently we obtain
$$
J_4^{2}\leq\sum_{d=[p-2]+1}^{\infty}\left|\binom{p-2}{d}\right|\int_{A_j}\bar{k}_j^{p-3-[p-2]}\bar{w}_j^{[p-2]+3}|\partial_t\eta_j^p|\,dx\,dt.
$$
Finally by using $\bar{k}_j^{p-3-[p-2]}<\bar{w}_j^{p-3-[p-2]}$, we have
\begin{equation}\label{J42}
\begin{split}
J_4^{2}&\leq\sum_{d=[p-2]+1}^{\infty}\left|\binom{p-2}{d}\right|\int_{A_j}\bar{w}_j^{p}|\partial_t\eta_j^p|\,dx\,dt\\
&\leq C(p)\frac{2^{jps}}{r_j^{ps}}\int_{A_j}\bar{w}_j^p\,dx\,dt\\
&\leq C(p)\frac{2^{jps}}{r_j^{ps}}\int_{\Gamma_j}\int_{B_j}{w}_j^p\,dx\,dt,
\end{split}
\end{equation}
where we have also used the fact that the series $\sum_{d=0}^{\infty}|\binom{p-2}{d}|$ is convergent. Therefore, using \eqref{J41} and \eqref{J42} into \eqref{J4}, we obtain
\begin{equation}\label{J4final}
J_4\leq C(p)\frac{2^{jp(s+1)}}{r_j^{ps}}\int_{\Gamma_j}\int_{B_j}{w}_j^p\,dx\,dt. 
\end{equation}

Now using the estimates \eqref{J5} and \eqref{J4final} in \eqref{J3} we conclude
\begin{equation}\label{J3final}
J_3\leq C(p,s)\frac{2^{j(n+ps+p)}}{r_j^{ps}}\int_{\Gamma_j}\int_{B_j}{w}_j^p\,dx\,dt. 
\end{equation}
Then using $\bar{w}_j^{p\kappa}\geq (2^{-j-2}\bar{k})^{p(\kappa-1)}w_{j+1}^p$ in \eqref{Embapp}, we get
\begin{equation}\label{subenergyI}
\begin{split}
I&=(2^{-j-2}\bar{k})^{p(\kappa-1)}\fint_{\Gamma_{j+1}}\fint_{B_{j+1}}w_{j+1}^p\,dx\,dt\\
&\leq\frac{Cr_{j+1}^{ps-n}}{|\Gamma_{j+1}|}I_1\cdot\Big(\frac{I_2}{|B_{j+1}|}\Big)^\frac{ps}{n}
+C\fint_{\Gamma_{j+1}}\fint_{B_{j+1}}w_{j}^p\,dx\,dt\cdot\Big(\frac{I_2}{|B_{j+1}|}\Big)^\frac{ps}{n}.
\end{split}
\end{equation}
Plugging the estimates \eqref{JI}, \eqref{subenergyJ2} and  \eqref{J3final} into \eqref{I12}, we have
\begin{equation}\label{subenergyI12final}
I_1,I_2\leq C(n,p,s,\Lambda)\frac{2^{j(n+sp+p)}}{\delta^{p-1} r_j^{sp}}\int_{\Gamma_{j}}\int_{B_j}w_j^{p}\,dx\,dt.
\end{equation}
Then using \eqref{subenergyI12final} in \eqref{subenergyI}, we get
\begin{equation*}\label{Ifinal}
I\leq C(n,p,s,\Lambda)\Big(\frac{2^{j(n+ps+p)}}{\delta^{p-1}}\fint_{\Gamma_{j}}\fint_{B_{j}}w_j^{p}\,dx\,dt\Big)^\frac{n+ps}{n}.
\end{equation*}
We now let
$$
A_j=\Big(\fint_{U_j}w_j^p\,dx\,dt\Big)^\frac{1}{p}.
$$
Then we have
$$
(2^{-j-2}\bar{k})^{p(\kappa-1)}A_{j+1}^p\leq C(n,p,s,\Lambda)\Big(\frac{2^{j(n+ps+p)}}{\delta^{p-1}}A_j^{p}\Big)^\frac{n+ps}{n}.
$$
Then for some positive constant $C=C(n,p,s,\Lambda)$ we have
\begin{align*}
\frac{A_{j+1}}{\bar{k}}&\leq\frac{C}{\bar{k}^\kappa}2^{j(\kappa-1)}\Big(\frac{2^{j(n+ps+p)}}{\delta^{p-1}}A_{j}^p\Big)^\frac{n+ps}{np}=C\frac{2^{j\big(\kappa-1+(n+ps+p)(\frac{\kappa}{p})\big)}}{\delta^{(p-1)\frac{\kappa}{p}}}\Big(\frac{A_j}{\bar{k}}\Big)^{\kappa}
\end{align*}
with $\kappa=\frac{n+ps}{n}$.
Noting that $w_0=u$, we now let 
$$
\bar{k}=\delta \Tail_{\infty}(u;x_0,\frac{r}{2},t_0-r^{ps},t_0)+C^\frac{n}{ps}2^{\frac{n^2}{p^2 s^2}{\big(\kappa-1+(n+ps+p)(\frac{\kappa}{p})\big)}}\delta^{-\frac{(p-1)\kappa n}{sp^{2}}}\Big(\fint_{U^{-}(r)}u^p\,dx\,dt\Big)^\frac{1}{p},
$$
such that  for
\[
\beta=\frac{ps}{n}, 
\quad c_0=\frac{C}{\delta^{(p-1)\frac{\kappa}{p}}}>1, 
\quad b=2^{\kappa-1+(n+ps+p)(\frac{\kappa}{p})}>1
\quad\text{and}\quad
Y_j=\frac{A_j}{\bar{k}},
\]
the hypothesis of Lemma \ref{Deg2} is satisfied and consequently we have that 

$$\sup_{U^{-}(\frac{r}{2})}\,u\leq\bar{k},$$
which proves  Theorem \ref{Localbound1}. In the case when  $sp\geq n$, the proof follows by similar arguments.

\section{Some qualitative and quantitative properties of supersolutions}
In this section, we prove some qualitative  and quantitative properties of supersolutions which are strictly bounded away from zero.  Throughout this section,  by a global supersolution  $u$ in $\mathbb{R}^n \times (0, T)$, we refer to a  bounded positive function $u$ which satisfies  the hypothesis of Definition \ref{subsupsolution} in $\Omega \times (0, T)$ where $\Omega$ is any bounded domain in $\mathbb{R}^n$.   

\medskip

The following lemma  is the nonlocal analogue of Lemma 3.1 in \cite{Kin-Kuusi} which states that the inverse of a supersolution is a subsolution. 
\begin{Lemma}
Let $p>2$ and $u\geq\rho>0$ in $\mathbb{R}^n\times(0,T)$ be a supersolution of \eqref{maineqn}, then $u^{-1}$ is a subsolution of \eqref{maineqn}.
\end{Lemma}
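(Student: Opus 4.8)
The plan is to substitute $v = u^{-1}$ into the weak formulation and show that it satisfies the subsolution inequality \eqref{weaksubsupsoln}. First I would fix an admissible test function $\phi \geq 0$ for the subsolution problem associated with $v$, and use the fact that $u \geq \rho > 0$ together with the boundedness of $u$ to ensure $v$ is bounded above and below, so that $v$ lies in the correct function spaces and the quantity $v^{p-1} = u^{-(p-1)}$ is comparable to $u^{p-1}$. The natural choice is to test the supersolution inequality for $u$ with $\varphi = u^{-p}\phi$ (or more precisely, a mollified version thereof, following Remark \ref{Molifier}), since this is the standard device in the local case \cite{Kin-Kuusi}; one must check $\varphi$ is a legitimate nonnegative test function, which again follows from the uniform positivity and boundedness of $u$ and the chain/product rule in $W^{s,p}$.

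The parabolic term is handled by the identity $\partial_t(v^{p-1})\phi = \partial_t(u^{-(p-1)})\phi = -(p-1)u^{-p}\partial_t u \,\phi$, while testing the $u$-equation with $u^{-p}\phi$ produces $\partial_t(u^{p-1})u^{-p}\phi = (p-1)u^{-1}\partial_t u\,\phi$; matching these up to the boundary terms requires an integration by parts in $t$ after mollification, exactly as in the proof of Lemma \ref{Subenergyestimate}, and the point is that the sign works out so that the time derivative contributions transform correctly. I would carry out this manipulation at the level of the mollified functions $v_h^{p-1} = (u^{p-1})_h$ and pass to the limit $h \to 0$ using \cite[Lemma 2.2]{KLin} as before.

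The nonlocal term is the heart of the matter and the main obstacle. After testing with $u^{-p}\phi$, the double integral becomes
\[
\int_{t_1}^{t_2}\int_{\mathbb{R}^n}\int_{\mathbb{R}^n}\mathcal{A}(u(x,y,t))\big(u(x,t)^{-p}\phi(x,t)-u(y,t)^{-p}\phi(y,t)\big)\,d\mu\,dt,
\]
and I must show this dominates (up to sign) the corresponding expression $\int\!\!\int\!\!\int \mathcal{A}(v(x,y,t))(\phi(x,t)-\phi(y,t))\,d\mu\,dt$ for $v = u^{-1}$. This reduces to a pointwise algebraic inequality: for $a,b>0$ and $\phi_1,\phi_2 \geq 0$, one needs
\[
|b-a|^{p-2}(b-a)\big(a^{-p}\phi_1 - b^{-p}\phi_2\big) \;\geq\; |b^{-1}-a^{-1}|^{p-2}(b^{-1}-a^{-1})(\phi_1-\phi_2)\cdot(\text{sign/scaling factor}),
\]
which after the substitution $a \mapsto 1/a$, $b \mapsto 1/b$ should follow from a convexity/monotonicity argument — essentially because $t \mapsto t^{p-1}$ and $t \mapsto t^{-(p-1)}$ interact in a controlled way, and $|b-a|^{p-2}(b-a)$ and $|a^{-1}-b^{-1}|^{p-2}(a^{-1}-b^{-1})$ have opposite signs. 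I expect to verify this by splitting into the cases $a < b$ and $a > b$ and checking the elementary one-variable inequality directly; this is where I anticipate the bulk of the technical work. Once the pointwise inequality is in hand, integrating against $d\mu$ (which is legitimate since $K$ is symmetric and the relevant integrands are integrable by the $L^\infty$ bounds and the tail estimate), combining with the parabolic term, and using that $u$ satisfies the supersolution inequality ($\geq 0$) yields the subsolution inequality ($\leq 0$) for $v = u^{-1}$, completing the proof.
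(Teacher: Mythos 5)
Your overall strategy — plug in $v=u^{-1}$, test the $u$-equation with a power of $u$ times $\phi$, handle the parabolic and nonlocal pieces separately via mollification and a pointwise inequality — is the same as the paper's. But your test function exponent is wrong, and this breaks both halves of the argument. With $\varphi=u^{-p}\phi$, the boundary/time contribution is $u^{p-1}\varphi=u^{-1}\phi$; you need it to be $v^{p-1}\phi=u^{1-p}\phi$, so in fact $\varphi$ must satisfy $u^{p-1}\varphi=u^{1-p}\phi$, which forces $\varphi=u^{2(1-p)}\phi$. This is the test function used in the paper. Your intermediate identity is also miscomputed: $\partial_t(u^{p-1})\cdot u^{-p}\phi=(p-1)u^{-2}\partial_t u\,\phi$, not $(p-1)u^{-1}\partial_t u\,\phi$, and even after this correction it cannot be matched to $-\partial_t(v^{p-1})\phi=(p-1)u^{-p}\partial_t u\,\phi$ unless $p=2$; no integration by parts in $t$ rescues the mismatch of powers when $p>2$.

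The same exponent issue kills the nonlocal part. Since $\mathcal{A}(u(x,y,t))=-\mathcal{A}(v(x,y,t))/(v(x,t)v(y,t))^{p-1}$ and $u^{2(1-p)}=v^{2(p-1)}$, with the correct $\varphi=u^{2(1-p)}\phi$ the nonlocal integrand becomes
\[
-\mathcal{A}(v(x,y,t))\Bigl(\Bigl(\tfrac{v(x,t)}{v(y,t)}\Bigr)^{p-1}\phi(x,t)-\Bigl(\tfrac{v(y,t)}{v(x,t)}\Bigr)^{p-1}\phi(y,t)\Bigr),
\]
and the pointwise inequality you then need is simply
\[
\mathcal{A}(v(x,y,t))\Bigl(\Bigl(\tfrac{v(x,t)}{v(y,t)}\Bigr)^{p-1}\phi(x,t)-\Bigl(\tfrac{v(y,t)}{v(x,t)}\Bigr)^{p-1}\phi(y,t)\Bigr)\geq\mathcal{A}(v(x,y,t))\bigl(\phi(x,t)-\phi(y,t)\bigr),
\]
which is immediate from the sign of $\mathcal{A}(v)$ and the fact that when $v(x,t)\geq v(y,t)$ the ratios $(v(x)/v(y))^{p-1}\geq1$ and $(v(y)/v(x))^{p-1}\leq1$ (and symmetrically otherwise). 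With your $u^{-p}\phi$, the analogous factor is $v(x,t)^{p}/(v(x,t)v(y,t))^{p-1}=v(x,t)/v(y,t)^{p-1}$, which need not be $\geq1$ when $v(x,t)\geq v(y,t)$ and $p>2$ (take $v(y)=2$, $v(x)=3$, $p=3$), so the required monotonicity fails. Your ``(sign/scaling factor)'' placeholder is exactly where the proof breaks. Replace $u^{-p}\phi$ by $u^{2(1-p)}\phi$, recompute both terms, and the argument goes through as in the paper.
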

\begin{proof}
Let $v=u^{-1}$ and $\psi\in W^{1,p}_{\loc}(0,T;L^p(\Omega'))\cap L^p_{\loc}(0,T;W_{0}^{s,p}(\Omega'))$ be nonnegative. 
Since $u$ is a weak supersolution of \eqref{maineqn}, by formally choosing $\phi(x,t)=u(x,t)^{2(1-p)}\psi(x,t)$ as a test function in \eqref{weaksubsupsoln} which can be justified by mollifying in time  as in the proof of  Lemma \ref{Subenergyestimate}, we obtain for every $[t_1,t_2]\Subset(0,T)$,
\begin{equation}\label{Inveqn}
0\leq I_1+I_2,
\end{equation}
where
\begin{align*}
I_1&=\int_{\Omega'}u(x,t_2)^{p-1}\phi(x,t_2)\,dx-\int_{\Omega'}u(x,t_1)^{p-1}\phi(x,t_1)\,dx-\int_{t_1}^{t_2}\int_{\Omega'}u(x,t)^{p-1}\partial_t\phi(x,t)\,dx\,dt,\\
&=\int_{\Omega'}u(x,t_2)^{1-p}\psi(x,t_2)\,dx-\int_{\Omega'}u(x,t_1)^{1-p}\psi(x,t_1)\,dx-I_3,
\end{align*}
with
\begin{align*}
I_3&=\int_{t_1}^{t_2}\int_{\Omega'}u(x,t)^{p-1}\big(u(x,t)^{2(1-p)}\partial_t\psi(x,t)-2(p-1)\psi(x,t)u(x,t)^{1-2p}\partial_t u(x,t)\big)\,dx\,dt\\
&=\int_{t_1}^{t_2}\int_{\Omega'}u(x,t)^{1-p}\partial_t\psi(x,t)\,dx\, dt
-2(p-1)\int_{t_1}^{t_2}\int_{\Omega'}\psi(x,t)u(x,t)^{-p}\partial_t u(x,t)\,dx\, dt\\
&=\int_{t_1}^{t_2}\int_{\Omega'}u(x,t)^{1-p}\partial_t\psi(x,t)\,dx\, dt
-2\int_{t_1}^{t_2}\int_{\Omega'}u(x,t)^{1-p}\partial_t\psi(x,t)\,dx\, dt+2I_4,
\end{align*}
and
\begin{align*}
I_4&=\int_{\Omega'}u(x,t_2)^{1-p}\psi(x,t_2)\,dx-\int_{\Omega'}u(x,t_1)^{1-p}\psi(x,t_1)\,dx.
\end{align*}
We thus obtain from above, 
\begin{align*}
I_1&=-\Big(\int_{\Omega'}v(x,t_2)^{p-1}\psi(x,t_2)\,dx-\int_{\Omega'}v(x,t_1)^{p-1}\psi(x,t_1)\,dx-\int_{t_1}^{t_2}\int_{\Omega'}v^{p-1}\partial_t\psi\,dx\,dt\Big).
\end{align*}
Here
\begin{align*}
I_2&=\int_{t_1}^{t_2}\int_{\mathbb{R}^n}\int_{\mathbb{R}^n}\mathcal{A}(u(x,y,t)){(\phi(x,t)-\phi(y,t))}\,d\mu\, dt\\
&=\int_{t_1}^{t_2}\int_{\mathbb{R}^n}\int_{\mathbb{R}^n}|u(x,t)-u(y,t)|^{p-2}(u(x,t)-u(y,t))\\
&\qquad\cdot(u(x,t)^{2(1-p)}\psi(x,t)-u(y,t)^{2(1-p)}\psi(y,t))\,d\mu\, dt\\
&=-\int_{t_1}^{t_2}\int_{\mathbb{R}^n}\int_{\mathbb{R}^n}\mathcal{A}(v(x,y,t)){\Big(\Big(\frac{v(x,t)}{v(y,t)}\Big)^{p-1}\psi(x,t)-\Big(\frac{v(y,t)}{v(x,t)}\Big)^{p-1}\psi(y,t)\Big)}\,d\mu\,dt.
\end{align*}

Now we estimate $I_2$. Let us first  consider the case when $v(x,t)\geq v(y,t)$. In this case, we have
\begin{align*}
\mathcal{A}(v(x,y,t)){\Big(\Big(\frac{v(x,t)}{v(y,t)}\Big)^{p-1}\psi(x,t)-\Big(\frac{v(y,t)}{v(x,t)}\Big)^{p-1}\psi(y,t)\Big)}&\geq \mathcal{A}(v(x,y,t))\big(\psi(x,t)-\psi(y,t)\big).
\end{align*}
Likewise when $v(x,t)<v(y,t)$, we have
\begin{align*}
&\mathcal{A}(v(x,y,t)){\Big(\Big(\frac{v(x,t)}{v(y,t)}\Big)^{p-1}\psi(x,t)-\Big(\frac{v(y,t)}{v(x,t)}\Big)^{p-1}\psi(y,t)\Big)}\\
&=|v(y,t)-v(x,t)|^{p-2}(v(y,t)-v(x,t))\Big(\Big(\frac{v(y,t)}{v(x,t)}\Big)^{p-1}\psi(y,t)-\Big(\frac{v(x,t)}{v(y,t)}\Big)^{p-1}\psi(x,t)\Big)\\
&\geq\mathcal{A}(v(y,x,t))(\psi(y,t)-\psi(x,t)).
\end{align*}
Therefore in either case we obtain
\begin{align*}
I_2&\leq -\int_{t_1}^{t_2}\int_{\mathbb{R}^n}\int_{\mathbb{R}^n}\mathcal{A}(v(x,y,t))(\psi(x,t)-\psi(y,t))\,d\mu\, dt.  
\end{align*}
By inserting the above estimates for $I_1$ and $I_2$ into \eqref{Inveqn}, we get
\begin{align*}
\int_{\Omega'}v(x,t_2)^{p-1}\psi(x,t_2)\,dx-\int_{\Omega'}v(x,t_1)^{p-1}\psi(x,t_1)\,dx-\int_{t_1}^{t_2}\int_{\Omega'}v(x,t)^{p-1}\partial_t\psi(x,t)\,dx\,dt\\
+\int_{t_1}^{t_2}\int_{\mathbb{R}^n}\int_{\mathbb{R}^n}\mathcal{A}(v(x,y,t))(\psi(x,t)-\psi(y,t))\,d\mu\, dt\leq 0.
\end{align*}
Hence $v=u^{-1}$ is a subsolution of \eqref{maineqn}.
\end{proof}

Now we prove an energy estimate for strictly positive supersolutions of \eqref{maineqn} which is the key ingredient needed to  deduce reverse H\"older inequality for strictly positive supersolutions.
\begin{Lemma}\label{supenergy1}
Let $p>2$, $x_0\in\mathbb{R}^n$, $r>0$ and $\alpha\in(0,{p-1})$. Suppose that $u$ is a weak supersolution of \eqref{maineqn} with $u\geq\rho>0$ in $\mathbb{R}^n\times(\tau_1,\tau_2+l)$, $(\tau_1,\tau_2+l)\subset(0,T)$. Then there exists positive constants $C=C(n,p,s,\Lambda)$ and $c=c(p)$ large enough such that
\begin{equation*}
\begin{split}
&\frac{p-1}{\alpha}\sup_{\tau_1<t<\tau_2}\int_{B_r}\psi(x)^p u(x,t)^\alpha\,dx
+\frac{\zeta(\epsilon)}{c(p)}\int_{\tau_1}^{\tau_2+l}\int_{B_r}\int_{B_r}\big|\psi(x)u(x,t)^\frac{\alpha}{p}-\psi(y)u(y,t)^\frac{\alpha}{p}\big|^p \eta(t)\,d\mu\,dt\\
&\leq\Big(\zeta(\epsilon)+1+\frac{1}{\epsilon^{p-1}}\Big)\int_{\tau_1}^{\tau_2+l}\int_{B_r}\int_{B_r}|\psi(x)-\psi(y)|^p(u(x,t)^\alpha+u(y,t)^\alpha)\eta(t)\,d\mu\,dt\\
&\qquad+C(\Lambda)\sup_{x\in\text{ supp }\psi}\int_{\mathbb{R}^n\setminus B_r}\frac{dy}{|x-y|^{n+ps}}\int_{\tau_1}^{\tau_2+l}\int_{B_r}u(x,t)^\alpha\psi(x)^p\eta(t)\,dx\,dt\\
&\qquad+\frac{p-1}{\alpha}\int_{\tau_1}^{\tau_2+l}\int_{B_r}u(x,t)^\alpha\psi(x)^p|\partial_t\eta(t)|\,dx\,dt,
\end{split}
\end{equation*}
where $\epsilon=p-\alpha-1$ and $\zeta(\epsilon)=\frac{\epsilon p^p}{\alpha^p}$, if $\alpha\geq 1$ and $\zeta(\epsilon)=\frac{\epsilon p^p}{\alpha}$ if $\alpha\in(0,1)$. Moreover $\psi\in C_{0}^{\infty}(B_r)$ is taken to be  nonnegative and  $\eta\in C^\infty(\mathbb{R})$  is  also non-negative such that $\eta(t)= 1$ if $\tau_1\leq t\leq\tau_2$ and $\eta(t)= 0$ if $t\geq\tau_2+l$.
\end{Lemma}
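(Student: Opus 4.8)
The plan is to use
\[
\phi(x,t)=u(x,t)^{-\epsilon}\psi(x)^{p}\eta(t),\qquad \epsilon:=p-\alpha-1\in(0,p-1),
\]
as a test function in the weak supersolution inequality \eqref{weaksubsupsoln}. Since $u\geq\rho>0$ we have $\phi\geq0$, and $\phi$ is admissible after mollifying in time and inserting an auxiliary time cut-off exactly as in the proof of Lemma \ref{Subenergyestimate}; here $s\mapsto s^{-\epsilon}$ is smooth on $[\rho,\infty)$, so no auxiliary function is required and $u^{\alpha}$ plays the role that $\xi(w)$ played there. Passing to the limit in the mollification, \eqref{weaksubsupsoln} takes the form $\mathcal T+\mathcal N_{1}+\mathcal N_{2}\geq0$, where $\mathcal T$ collects the boundary and time-derivative terms, $\mathcal N_{1}$ is the double integral over $B_{r}\times B_{r}$, and, by the symmetry of $K$ and $\mathrm{supp}\,\psi\subset B_{r}$, $\mathcal N_{2}=2\int_{\tau_{1}}^{\tau_{2}+l}\int_{B_{r}}\int_{\mathbb{R}^{n}\setminus B_{r}}\mathcal A(u(x,y,t))\phi(x,t)\,d\mu\,dt$. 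Since $u^{p-1}\phi=u^{\alpha}\psi^{p}\eta$ and $u^{p-1}\partial_{t}\phi=-\frac{\epsilon}{\alpha}\partial_{t}(u^{\alpha})\psi^{p}\eta+u^{\alpha}\psi^{p}\partial_{t}\eta$, substituting and integrating the resulting $\partial_{t}(u^{\alpha})$-term by parts in $t$, using $1+\frac{\epsilon}{\alpha}=\frac{p-1}{\alpha}$, gives
\[
\mathcal T=\frac{p-1}{\alpha}\Big(\int_{B_{r}}u(x,l_{2})^{\alpha}\psi^{p}\eta(l_{2})\,dx-\int_{B_{r}}u(x,l_{1})^{\alpha}\psi^{p}\eta(l_{1})\,dx-\int_{l_{1}}^{l_{2}}\int_{B_{r}}u^{\alpha}\psi^{p}\partial_{t}\eta\,dx\,dt\Big)
\]
for a.e.\ $l_{1}<l_{2}$ in $(\tau_{1},\tau_{2}+l)$. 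For the tail part, $\mathcal A(u(x,y,t))\leq u(x,t)^{p-1}$ when $x\in B_{r}$, $y\notin B_{r}$ (either the left side is negative, or it equals $(u(x,t)-u(y,t))^{p-1}\le u(x,t)^{p-1}$), so, since $\phi\geq0$, $p-1-\epsilon=\alpha$ and \eqref{kernel} holds,
\[
\mathcal N_{2}\leq C(\Lambda)\sup_{x\in\mathrm{supp}\,\psi}\int_{\mathbb{R}^{n}\setminus B_{r}}\frac{dy}{|x-y|^{n+ps}}\int_{\tau_{1}}^{\tau_{2}+l}\int_{B_{r}}u(x,t)^{\alpha}\psi(x)^{p}\eta(t)\,dx\,dt .
\]

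The heart of the proof is $\mathcal N_{1}$, and this is where Lemma \ref{Inequality1} is used. Writing $-\mathcal N_{1}=\int_{\tau_{1}}^{\tau_{2}+l}\int_{B_{r}}\int_{B_{r}}\mathcal A(u(x,y,t))(\phi(y,t)-\phi(x,t))\,d\mu\,dt$, the integrand is $\eta(t)K(x,y,t)$ times
\[
|u(x,t)-u(y,t)|^{p-2}(u(x,t)-u(y,t))\big(\psi(y)^{p}u(y,t)^{-\epsilon}-\psi(x)^{p}u(x,t)^{-\epsilon}\big),
\]
which is exactly the left-hand side of \eqref{Ineeqn} with $b=u(x,t)$, $a=u(y,t)$, $\tau_{1}=\psi(y)$, $\tau_{2}=\psi(x)$. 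Since $p-\epsilon-1=\alpha$, applying Lemma \ref{Inequality1} pointwise (with $\zeta(\epsilon)=\epsilon p^{p}/\alpha^{p}$ for $\alpha\geq1$ and $\zeta(\epsilon)=\epsilon p^{p}/\alpha$ for $\alpha\in(0,1)$) and integrating gives
\[
-\mathcal N_{1}\geq\frac{\zeta(\epsilon)}{c(p)}\int_{\tau_{1}}^{\tau_{2}+l}\int_{B_{r}}\int_{B_{r}}\big|\psi(x)u(x,t)^{\alpha/p}-\psi(y)u(y,t)^{\alpha/p}\big|^{p}\eta\,d\mu\,dt-\Big(\zeta(\epsilon)+1+\tfrac{1}{\epsilon^{p-1}}\Big)\int_{\tau_{1}}^{\tau_{2}+l}\int_{B_{r}}\int_{B_{r}}|\psi(x)-\psi(y)|^{p}\big(u(x,t)^{\alpha}+u(y,t)^{\alpha}\big)\eta\,d\mu\,dt ,
\]
with $c(p)$ the constant of Lemma \ref{Inequality1}. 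This is precisely the Gagliardo seminorm term on the left of the claimed estimate together with the first term on its right.

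To conclude I would combine the three facts through $\mathcal T\geq-\mathcal N_{1}-\mathcal N_{2}$. Since $\partial_{t}\eta$ is supported in $[\tau_{2},\tau_{2}+l]$, where $\eta$ is nonincreasing, $-\partial_{t}\eta=|\partial_{t}\eta|$ there. Taking $l_{2}=\tau_{2}+l$ (so $\eta(l_{2})=0$), letting $l_{1}\to\tau_{1}^{+}$ (so $\eta(l_{1})\to1$, using $u\in C_{\loc}(0,T;L^{p}_{\loc})$) and discarding the nonnegative term $\frac{p-1}{\alpha}\int_{B_{r}}u(x,\tau_{1})^{\alpha}\psi^{p}\,dx$ produces the bound for the seminorm term; while for each fixed $l_{1}\in(\tau_{1},\tau_{2})$ (so $\eta(l_{1})=1$), discarding instead the nonnegative seminorm and taking the supremum over $l_{1}$ produces the bound for $\frac{p-1}{\alpha}\sup_{\tau_{1}<t<\tau_{2}}\int_{B_{r}}\psi^{p}u^{\alpha}\,dx$. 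Adding the two and absorbing the fixed numerical factors into $C=C(n,p,s,\Lambda)$ and $c=c(p)$ yields the assertion.

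I expect the only real difficulty to be bookkeeping: carrying out the time mollification cleanly in the presence of the singular kernel $K$ (so that $\partial_{t}u$ enters only through mollified quantities, as in Lemma \ref{Subenergyestimate}) and then extracting both the supremum and the Gagliardo seminorm on the left with the stated coefficients. The one genuinely sharp ingredient — controlling the nonlocal term by a constant that stays finite as $\alpha\to0$ — is supplied wholesale by Lemma \ref{Inequality1}, whose own proof is deferred to the appendix, so no deeper obstacle arises here.
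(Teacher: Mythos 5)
Your proposal is correct and follows essentially the same route as the paper: test with $\phi=u^{-\epsilon}\psi^p\eta$, handle the time part by the chain rule $u^{p-1}\partial_t\phi=-\tfrac{\epsilon}{\alpha}\partial_t(u^\alpha)\psi^p\eta+u^\alpha\psi^p\partial_t\eta$ and integrate by parts, bound the tail pointwise by $\mathcal A(u(x,y,t))\le u(x,t)^{p-1}$, and apply Lemma \ref{Inequality1} to the local double integral, with the sup term and the seminorm term extracted in two steps and added. The only surface difference is that the paper picks a near-maximizing time $t_1$ with the factor-$\tfrac12$ trick rather than passing to a supremum directly, but this is cosmetic and both are standard.
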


\begin{proof}
Let $t_1\in(\tau_1,\tau_2)$ and $t_2=\tau_2+l$. We consider $\eta\in C^\infty(t_1,t_2)$ such that $\eta(t_2)=0$ and $\eta(t)= 1$ for all $t\leq t_1$. Let $\epsilon\in(0,p-1)$ and $\alpha=p-\epsilon-1$.
Then since $u$ is a strictly positive weak supersolution of \eqref{maineqn}, choosing $\phi(x,t)=u(x,t)^{-\epsilon}\psi(x)^p\eta(t)$ as a test function in \eqref{weaksubsupsoln} (which is again justified  by mollifying in time),  we obtain
\begin{equation}\label{supestimate1}
\begin{gathered}
0\leq I_1+I_2+2 I_3,
\end{gathered}
\end{equation}
where
$$
I_1=\int_{t_1}^{t_2}\int_{B_r}\frac{\partial}{\partial t}(u^{p-1})\phi(x,t)\,dx\,dt,
$$
$$
I_2=\int_{t_1}^{t_2}\int_{B_r}\int_{B_r}\mathcal{A}(u(x,y,t))(u(x,t)^{-\epsilon}\psi(x)^p-u(y,t)^{-\epsilon}\psi(y)^p)\eta(t)\,d\mu\,dt,
$$
and
$$
I_3=\int_{t_1}^{t_2}\int_{\mathbb{R}^n\setminus B_r}\int_{B_r}\mathcal{A}(u(x,y,t))u(x,t)^{-\epsilon}\psi(x,t)^p\eta(t)\,d\mu\,dt.
$$
We observe that for any $x\in B_r$ and $y\in\mathbb{R}^n\setminus B_r$,  we have that the integrand in $I_3$ is non-negative precisely in the set where $u(x, t) \geq u(y, t)$.  In view of this, we observe that $I_3$ can be estimated from above in the following way,
\begin{equation}\label{supI3final}
\begin{split}
I_3&=\int_{t_1}^{t_2}\int_{\mathbb{R}^n\setminus B_r}\int_{B_r}|u(x,t)-u(y,t)|^{p-2}(u(x,t)-u(y,t))u(x,t)^{-\epsilon}\psi(x)^p\eta(t)\,d\mu\,dt\\
&\leq\int_{t_1}^{t_2}\int_{\mathbb{R}^n\setminus B_r}\int_{B_r} u(x,t)^{p-\epsilon-1}\psi(x)^p\eta(t)\,d\mu\,dt\\
&\leq C(\Lambda)\sup_{x\in\text{supp}\,\psi}\int_{\mathbb{R}^n\setminus B_r}\frac{dy}{|x-y|^{n+ps}}\int_{t_1}^{t_2}\int_{B_r}u(x,t)^{p-\epsilon-1}\psi(x)^p\eta(t)\,dx\,dt.
\end{split}
\end{equation}
Then we note that  $I_2$ can be estimated using Lemma \ref{Inequality1} as follows,
\begin{equation}\label{I2sup}
\begin{split}
I_2&\leq -\frac{\zeta(\epsilon)}{C(p)}\int_{t_1}^{t_2}\int_{B_r}\int_{B_r}\big|\psi(x)u(x,t)^\frac{\alpha}{p}-\psi(y)u(y,t)^\frac{\alpha}{p}\big|^p\eta(t)\,d\mu\, dt\\
&\qquad+\Big(\zeta(\epsilon)+1+\frac{1}{\epsilon^{p-1}}\Big)\int_{t_1}^{t_2}\int_{B_r}\int_{B_r}|\psi(x)-\psi(y)|^p(u(x,t)^\alpha+u(y,t)^\alpha)\eta(t)\,d\mu\, dt.
\end{split}
\end{equation}
For $I_1$ we have
\begin{equation}\label{I1final}
\begin{split}
I_1&=-\frac{p-1}{p-\epsilon-1}\int_{B_r}u^{p-\epsilon-1}(x,t_1)\psi(x)^p\,dx\\
&\qquad-\frac{p-1}{p-\epsilon-1}\int_{t_1}^{t_2}\int_{B_r}u(x,t)^{p-\epsilon-1}\psi(x)^p\partial_{t}\eta(t)\,dx\,dt.
\end{split}
\end{equation}

Now using \eqref{supI3final}, \eqref{I2sup}, \eqref{I1final} into \eqref{supestimate1} and letting $t_1\to \tau_1$, we obtain
\begin{equation}\label{seminorm2}
\begin{split}
&\frac{\zeta(\epsilon)}{C(p)}\int_{\tau_1}^{\tau_2+l}\int_{B_r}\int_{B_r}\big|\psi(x)u(x,t)^\frac{\alpha}{p}-\psi(y)u(y,t)^\frac{\alpha}{p}\big|^p \eta(t)\,d\mu\,dt\\
&\leq\Big(\zeta(\epsilon)+1+\frac{1}{\epsilon^{p-1}}\Big)\int_{\tau_1}^{\tau_2+l}\int_{B_r}\int_{B_r}|\psi(x)-\psi(y)|^p(u(x,t)^\alpha+u(y,t)^\alpha)\eta(t)\,d\mu\,dt\\
&\qquad+C(\Lambda)\sup_{x\in\text{ supp }\psi}\int_{\mathbb{R}^n\setminus B_r}\frac{dy}{|x-y|^{n+ps}}\int_{\tau_1}^{\tau_2+l}\int_{B_r}u(x,t)^\alpha\psi(x)^p\eta(t)\,dx\,dt\\
&\qquad+\frac{p-1}{\alpha}\int_{\tau_1}^{\tau_2+l}\int_{B_r}u(x,t)^\alpha\psi(x)^p|\partial_t\eta(t)|\,dx\,dt.
\end{split}
\end{equation}

We then choose $t_1$ such that
\begin{equation}\label{suptime}
\int_{B_r}u(x,t_1)^{p-\epsilon-1}\psi(x)^p\,dx\geq\frac{1}{2}\sup_{\tau_1<t<\tau_2}\int_{B_r}u(x,t)^{p-\epsilon-1}\psi(x)^p\,dx.
\end{equation}
Again using \eqref{supI3final}, \eqref{I2sup}, \eqref{I1final} and \eqref{suptime},  we get

\begin{equation}\label{parabolicnorm2}
\begin{split}
&\frac{p-1}{\alpha}\sup_{\tau_1<t<\tau_2}\int_{B_r}\psi(x)^p u(x,t)^\alpha\,dx\\
&\leq\Big(\zeta(\epsilon)+1+\frac{1}{\epsilon^{p-1}}\Big)\int_{\tau_1}^{\tau_2+l}\int_{B_r}\int_{B_r}|\psi(x)-\psi(y)|^p(u(x,t)^\alpha+u(y,t)^\alpha)\eta(t)\,d\mu\,dt\\
&\qquad+C(\Lambda)\sup_{x\in\text{ supp }\psi}\int_{\mathbb{R}^n\setminus B_r}\frac{dy}{|x-y|^{n+ps}}\int_{\tau_1}^{\tau_2+l}\int_{B_r}u(x,t)^\alpha\psi(x)^p\eta(t)\,dx\,dt\\
&\qquad+\frac{p-1}{\alpha}\int_{\tau_1}^{\tau_2+l}\int_{B_r}u(x,t)^\alpha\psi(x)^p|\partial_t\eta(t)|\,dx\,dt.
\end{split}
\end{equation}
Therefore from \eqref{seminorm2} and \eqref{parabolicnorm2} we get the required estimate.
\end{proof}

Following the energy estimate, we  now proceed with the proof of  the  reverse H$\ddot{\text{o}}$lder inequality for strictly positive supersolutions as in Theorem \ref{revholder}.

\medskip

\textbf{Proof of Theorem \ref{revholder}}
We will prove the Theorem when $sp<n$ with $\kappa=\frac{n+ps}{n}$. Similar argument holds in case when $sp\geq n$.
Let us denote by
$$
r_0=r,
\quad r_j=\Big(1-(1-\theta)\frac{1-2^{-j}}{(1-2^{-m})}\Big)r,
\quad \delta_j=2^{-j}r,\,j=1,2,\dots,m
$$
and $U_j=B_j\times\Gamma_j=B_{r_j}(x_0)\times (t_0,t_0+r_{j}^{ps})$.
We shall fix $m$ later.
Now we choose nonnegative test functions $\psi_j\in C_{c}^\infty(B_j)$ such that
$0\leq\psi_j\leq 1$ in $B_j$, $\psi_j\equiv 1$ in $B_{j+1}$,
$|\nabla\psi_j|\leq\frac{2^{j+3}}{(1-\theta)r}$
and $\text{dist}\,(\text{supp}\,\psi_j,\mathbb{R}^n\setminus B_j)\geq\frac{\delta_j(1-\theta)}{2}$.
Moreover we choose $\eta_j\in C^\infty(\Gamma_j)$ such that 
$0\leq\eta_j\leq 1$ in $\Gamma_j$, $\eta_j\equiv 1$ in $\Gamma_{j+1}$,
and $|\partial_t\eta_j|\leq\frac{2^{ps(j+3)}}{(1-\theta)r^{ps}}$, $\eta_j(t_0+r_j^{ps})=0$.
Let $\alpha=p-\epsilon-1$ where $\epsilon\in(0,p-1)$. Then $\alpha\in(0,p-1)$. Denote by $v=u^\frac{\alpha}{p}$. Let $r=r_j$, $\tau_1=t_0$, $\tau_2=t_0+r_{j+1}^{ps}$, $l=r_j^{ps}-r_{j+1}^{ps}$. By the Sobolev embedding theorem (Lemma \ref{parabolic embedding}), we obtain
\begin{equation}\label{parem1}
\begin{split}
\int_{\Gamma_{j+1}}\fint_{B_{j+1}}|v|^{p\kappa}\,dx\,dt&\leq C\Big(r_{j+1}^{ps-n} I_1+\int_{\Gamma_{j+1}}\fint_{B_{j+1}}|v|^p\,dx dt\Big)\cdot\Big(\frac{I_2}{|B_{j+1}|}\Big)^\frac{ps}{n},
\end{split}
\end{equation}
where
$$
I_1=\int_{\Gamma_{j+1}}\int_{B_{j+1}}\int_{B_{j+1}}\frac{|v(x,t)-v(y,t)|^p}{|x-y|^{n+sp}}\,dx\,dy\,dt,
$$
and 
$$
I_2=\sup_{\Gamma_{j+1}}\int_{B_{j+1}}|v|^p\,dx.
$$
Using the fact that $\psi_j \equiv 1$ on $B_{j+1}$ and also that $\eta_j \equiv 1$ on $\Gamma_{j+1}$, we obtain using  Lemma \ref{supenergy1} that the following holds,
\begin{equation}\label{I1rev}
I_1, I_2\leq C(J_1+J_2+J_3),
\end{equation}
for some positive constant $C$ which is  independent of $\alpha$ as long as $\alpha$ is away from $p-1$, where
\begin{equation}\label{paremJ1}
\begin{split}
J_1&=\int_{\Gamma_j}\int_{B_j}\int_{B_j}(v(x,t)^p+v(y,t)^p)\frac{|\psi_j(x)-\psi_j(y)|^p}{|x-y|^{n+sp}}\eta_j(t)\,dx\,dy\,dt\\
&\leq C\frac{2^{j(n+sp+p)}}{(1-\theta)^p r_j^{ps}}\int_{\Gamma_j}\int_{B_j}v(x,t)^p\,dx\,dt,
\end{split}
\end{equation}
since $r_j<r$,

\begin{equation}\label{paremJ2}
\begin{split}
J_2&=\sup_{x\in\text{supp}\,\psi_{j}}\int_{\mathbb{R}^n\setminus B_j}\frac{dy}{|x-y|^{n+sp}}\int_{\Gamma_j}\int_{B_j}v(x,t)^p\psi_j(x)^p\eta_j(t)\,dx\,dt\\
&\leq C\frac{2^{j(n+sp+p)}}{r_j^{ps}}\int_{\Gamma_j}\int_{B_j}v(x,t)^p\,dx\,dt,
\end{split}
\end{equation}
and
\begin{equation}\label{paremJ3}
J_3=\int_{\Gamma_j} \int_{B_j}\psi_j(x)^p v(x,t)^p|\partial_{t}\eta_j(t)|\,dx\,dt
\leq C\frac{2^{ps(j+3)}}{(1-\theta)r_j^{ps}}\int_{\Gamma_j}\int_{B_j}v(x,t)^{p}\,dx\,dt,
\end{equation}
again since $r_j < r$.

Therefore using \eqref{paremJ1}, \eqref{paremJ2} and \eqref{paremJ3} into \eqref{I1rev} we obtain
\begin{equation}\label{paremI1revfinal}
\begin{split}
I_1, I_2&\leq C\frac{2^{j(n+ps+p)}}{(1-\theta)^pr_j^{ps}}\int_{\Gamma_j}\int_{B_j}v(x,t)^p\,dx\,dt,
\end{split}
\end{equation}
for some positive constant $C$ independent of $\alpha$ as long as $\alpha$ is away from $p-1$, but may depend on $n,p,s,\Lambda$.

Using the estimate \eqref{paremI1revfinal} and the fact that $r_{j+1}<r_j<2 r_{j+1}$ for every $j$, we obtain from \eqref{parem1}, since $v=u^\frac{\alpha}{p}$ that
\begin{equation}\label{Moseritestep1}
\fint_{U_{j+1}}|u|^{\kappa\alpha}\, dx\, dt\leq C\Big(\frac{2^{j(n+ps+p)}}{(1-\theta)^{p}}\fint_{U_{j}}|u|^\alpha\,dx\, dt\Big)^\kappa,
\end{equation}
for some positive constant $C$ independent of $\alpha$ (given that our choice of $\alpha$ will be away from $p-1$) but may depend on $n,p,s,\Lambda$. Now we use Moser iteration technique into \eqref{Moseritestep1}. Since $\alpha\in(0,p-1)$, we have $\kappa\alpha\in(0,\kappa(p-1))$. This gives the upper bound $q_0=\kappa(p-1)$ for $q$. Let us fix $q,\bar{q}$ such that $0<\bar{q}<q$ and $m$ such that $\bar{q}\kappa^{m-1}\leq q\leq \bar{q}\kappa^{m}$. Let $t_0$ be such that $t_0\leq\bar{q}$ and $q=\kappa^{m}t_0$. Denote by $t_j=\kappa^{j}t_0$ for $j=0,1,\cdots,m$. Then observing that $r_m=\theta r$ and $r_0=r$, we get $U_m=U^{+}(\theta r)$ and $U_0=U^{+}(r)$. Hence 
\begin{align*}
\Big(\fint_{U^{+}{(\theta r)}}u^q\,dx\, dt\Big)^\frac{1}{q}&=\Big(\fint_{U_{m}}u^q\,dx\, dt\Big)^\frac{1}{q}\\
&\leq\Big(\frac{C 2^{\frac{(n+ps+p)m}{p}}}{(1-\theta)}\Big)^\frac{p}{t_{m-1}}\Big(\fint_{U_{m-1}}u^{t_{m-1}}\,dx\, dt\Big)^\frac{1}{t_{m-1}}\\
&\leq \Big(\frac{C_{\text{prod}}(m)}{(1-\theta)^{m^{*}}}\fint_{U^{+}(r)}u^{t_0}\,dx\, dt\Big)^\frac{1}{t_0},
\end{align*}
where
$$
C_{\text{prod}}(m)= C^{m^{*}}\prod_{j=0}^{m-1}\Big(2^{{\frac{n+ps+p}{p}}(j+1)}\Big)^{p\kappa^{-j}},
$$ 
and
$$
m^{*}=p\sum_{j=0}^{m-1}\kappa^{-j}=\frac{p\kappa}{\kappa-1}(1-\kappa^{-m}).
$$
It can be easily seen that $C_{\text{prod}}(m)$ is a positive constant uniformly bounded on $m$, where $C$ is independent of $\bar{q}$ but depends on $q$ due to the singularity of the constants involved   in the energy inequality in Lemma \ref{supenergy1} at $\epsilon=0$. Finally using H$\ddot{\text{o}}$lder's inequality, we obtain
\begin{equation*}\label{finalestimatewithr}
\begin{gathered}
\Big(\fint_{U^{+}(\theta r)}u^{q}\,dx\, dt\Big)^\frac{1}{q}\leq\Big(\frac{C}{(1-\theta)^{m^{*}}}\Big)^\frac{1}{t_0}\Big(\fint_{U^{+}(r)}u^{\bar{q}}\,dx\, dt\Big)^\frac{1}{\bar{q}}.
\end{gathered}
\end{equation*}
Now, since $\bar{q}\kappa^{m-1}\leq t_0\kappa^{m}$, we have $t_0\geq\frac{\bar{q}}{\kappa}$. As a consequence the required estimate follows with $\mu=\frac{p\kappa^2}{\kappa-1}$.

In closing,  we  prove the following logarithmic estimate for stricly positive supersolutions which constitutes the nonlocal analogue of Lemma 6.1 in \cite{Kin-Kuusi} and also constitutes one of the key ingredients in the proof of weak harnack in the local case. 
\begin{Lemma}\label{Logestimatelemma}
Let $p>2$, $x_0\in\mathbb{R}^n$, $r>0$ and $t_0\in(r^{ps},T-r^{ps})$. Suppose that $u$ is a weak supersolution to \eqref{maineqn} with $u\geq\rho>0$ in $\mathbb{R}^n\times (t_0-r^{ps},t_0+r^{ps})$. Then there exists a positive constant $C=C(n,p,s,\Lambda)$ such that
\begin{equation}\label{logest1}
\big|\{(x,t)\in U^{+}(x_0,t_0,r):\log u(x,t)<-\lambda-b\}\big|\leq \frac{Cr^{n+ps}}{\lambda^{p-1}}
\end{equation}
and
\begin{equation}\label{logest2}
\big|\{(x,t)\in U^{-}(x_0,t_0,r):\log u(x,t)>\lambda-b\}\big|\leq \frac{Cr^{n+ps}}{\lambda^{p-1}}
\end{equation}
where 
\[
b=b(u(\cdot,t_0))= - \frac{\int_{B_{\frac{3r}{2}}(x_0)} \log u(x,t_0)\psi(x)^p\,dx}{\int_{B_{\frac{3r}{2}}(x_0)}\psi(x)^{p}\,dx}.
\]
\end{Lemma}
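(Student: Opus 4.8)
The plan is to run the logarithmic test function $\phi(x,t)=u(x,t)^{1-p}\psi(x)^{p}$, which is the endpoint $\epsilon=p-1$ of the family used for Lemma~\ref{supenergy1}. Here $\psi\in C_0^\infty(B_{3r/2}(x_0))$ is chosen radially decreasing with $\psi\equiv 1$ on $B_r(x_0)$, $|\nabla\psi|\le C/r$, and $\mathrm{dist}(\mathrm{supp}\,\psi,\partial B_{3r/2}(x_0))\ge r/8$; admissibility of $\phi$ and the meaning of $\partial_t\phi$ are handled by the time mollification of Remark~\ref{Molifier}, exactly as in the proof of Lemma~\ref{Subenergyestimate}. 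Since $u^{p-1}\phi=\psi^{p}$ is $t$-independent, the two pure time-boundary terms in \eqref{weaksubsupsoln} cancel and $-u^{p-1}\partial_t\phi=(p-1)\,\partial_t(\log u)\,\psi^{p}$; writing $B'=B_{3r/2}(x_0)$, for any $(t_1,t_2)\subset(t_0-r^{ps},t_0+r^{ps})$ the supersolution inequality reduces to
\begin{equation*}
(p-1)\int_{B'}\big[\log u(\cdot,t_2)-\log u(\cdot,t_1)\big]\psi^{p}\,dx+\int_{t_1}^{t_2}\int_{\mathbb{R}^n}\int_{\mathbb{R}^n}\mathcal{A}(u(x,y,t))\big(u(x,t)^{1-p}\psi(x)^{p}-u(y,t)^{1-p}\psi(y)^{p}\big)\,d\mu\,dt\ge0 .
\end{equation*}

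To control the nonlocal integral I would use, on $B'\times B'$, the elementary pointwise inequality
\begin{equation*}
|a-b|^{p-2}(a-b)\big(a^{1-p}\tau_1^{p}-b^{1-p}\tau_2^{p}\big)\le-\tfrac{1}{C(p)}\big|\log(a/b)\big|^{p}\min\{\tau_1,\tau_2\}^{p}+C(p)\,|\tau_1-\tau_2|^{p},\qquad a,b>0,\ \tau_1,\tau_2\ge0,
\end{equation*}
which is the $\epsilon\to p-1$ limiting analogue of Lemma~\ref{Inequality1}: at this endpoint the powers $p-\epsilon-1$ of $a$ and $b$ degenerate to $0$ and the lower order term stays bounded; the inequality follows by separating $\tau_1^{p}-\tau_2^{p}$ with the help of Lemma~\ref{epsilon} together with the scalar estimate $(t-1)^{p-1}(1-t^{1-p})\ge c(p)(\log t)^{p}$ for $t\ge1$. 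On the long-range part $2\int_{B'}\int_{\mathbb{R}^n\setminus B'}$ one has $\mathcal{A}(u(x,y,t))\,u(x,t)^{1-p}\le 1$ where $u(x,t)\ge u(y,t)$ and $\le0$ otherwise, so that contribution is at most $C(\Lambda)\int_{B'}\psi^{p}(x)\big(\int_{\mathbb{R}^n\setminus B'}|x-y|^{-n-ps}\,dy\big)\,dx\le C(n,p,s,\Lambda)\,r^{n-ps}$. Combining with $\int_{B'}\int_{B'}|\psi(x)-\psi(y)|^{p}|x-y|^{-n-ps}\,dx\,dy\le Cr^{n-ps}$ and $|t_2-t_1|\le2r^{ps}$ yields, with $\mathcal{D}(t):=\int_{B'}\int_{B'}\frac{|\log u(x,t)-\log u(y,t)|^{p}}{|x-y|^{n+ps}}\min\{\psi(x)^{p},\psi(y)^{p}\}\,dx\,dy$, the logarithmic energy estimate
\begin{equation}\label{logcacc}
\tfrac1C\int_{t_1}^{t_2}\mathcal{D}(t)\,dt\le (p-1)\int_{B'}\big[\log u(\cdot,t_1)-\log u(\cdot,t_2)\big]\psi^{p}\,dx+C\,r^{n},
\end{equation}
with $C=C(n,p,s,\Lambda)$.

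Set $w=-\log u$, $V(t)=\big(\int_{B'}\psi^{p}\big)^{-1}\int_{B'}w(\cdot,t)\psi^{p}\,dx$ (so $V(t_0)=b$), and, for \eqref{logest1}, $E(t)=\int_{t_0}^{t}\mathcal{D}(s)\,ds\ge0$. Evaluating \eqref{logcacc} on $(t_0,t)$ gives $\tfrac1C E(t)+(p-1)\big(\int_{B'}\psi^{p}\big)(V(t)-b)\le Cr^{n}$, hence $b-V(t)\ge c_0 E(t)r^{-n}-C_1$ with $c_0,C_1$ depending only on $n,p,s,\Lambda$; in particular $V(t)\le b+C_1$. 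On $\{(x,t)\in U^{+}(x_0,t_0,r):\log u<-\lambda-b\}$, and since $\psi\equiv1$ on $B_r$, one has $w(x,t)-V(t)>\lambda+(b-V(t))\ge\tfrac{\lambda}{2}+c_0E(t)r^{-n}>0$ as soon as $\lambda\ge2(C_1+1)$, so by Lemma~\ref{wgtPoincare} applied with the radially decreasing weight $\psi^{p}$ (which gives $\int_{B'}|w(\cdot,t)-V(t)|^{p}\psi^{p}\,dx\le Cr^{ps}\mathcal{D}(t)$) and Chebyshev,
\begin{equation*}
\big|\{x\in B_r:w(x,t)-b>\lambda\}\big|\le\frac{1}{\big(\tfrac{\lambda}{2}+c_0E(t)r^{-n}\big)^{p}}\int_{B'}|w(\cdot,t)-V(t)|^{p}\psi^{p}\,dx\le\frac{C\,r^{ps}\,E'(t)}{\big(\tfrac{\lambda}{2}+c_0E(t)r^{-n}\big)^{p}} .
\end{equation*}
Integrating over $t\in(t_0,t_0+r^{ps})$ and substituting $z=E(t)$ gives a bound $\le Cr^{ps}\int_0^{\infty}(\tfrac{\lambda}{2}+c_0 z r^{-n})^{-p}\,dz=Cr^{ps}\cdot\tfrac{r^{n}}{c_0(p-1)}(\lambda/2)^{1-p}\le C\,r^{n+ps}\lambda^{-(p-1)}$, which is \eqref{logest1} (for $\lambda$ below the threshold the statement is trivial after enlarging $C$). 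Inequality \eqref{logest2} is proved identically after reversing time: apply \eqref{logcacc} on $(t,t_0)$, set $E(t)=\int_{t}^{t_0}\mathcal{D}(s)\,ds$ so that $V(t)-b\ge c_0E(t)r^{-n}-C_1$, and run the same Chebyshev plus $z=E(t)$ substitution on $U^{-}(x_0,t_0,r)$.

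The subtle point is extracting the exponent $\lambda^{-(p-1)}$ rather than $\lambda^{-p}$: because $V$ may drift downward over the time slab with no a priori lower bound, the naive estimate of $\int_{U^{\pm}}|w-V(t)|^{p}$ is \emph{not} controlled by a $\lambda$-independent constant. Keeping the cumulative seminorm $E(t)$ in the denominator --- the same quantity that \eqref{logcacc} forces to make $b-V(t)$ large --- is exactly what lets the $t$-integral collapse to $\int_0^{\infty}(\lambda+cz)^{-p}\,dz\sim\lambda^{1-p}$. The remaining technical work is the rigorous mollification justifying admissibility of $\phi=u^{1-p}\psi^{p}$ and the proof of the endpoint algebraic inequality, which behaves differently from Lemma~\ref{Inequality1} precisely because $p-\epsilon-1=0$ there.
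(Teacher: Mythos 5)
Your argument is correct in substance and takes a genuinely different route from the paper in the second half of the proof.

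The setup is the same: both you and the paper test the supersolution inequality with $\phi=u^{1-p}\psi^p$, estimate the nonlocal term by an elementary log-type pointwise inequality (the paper cites Lemma~1.3 of \cite{Kuusilocal}; you sketch a self-contained derivation from Lemma~\ref{epsilon} and a scalar estimate), apply the weighted Poincar\'e inequality of Lemma~\ref{wgtPoincare} with $\psi^p$ as weight, and arrive at the same logarithmic Caccioppoli estimate. Note there is a benign sign typo in your display \eqref{logcacc}: the time boundary term should read $\log u(\cdot,t_2)-\log u(\cdot,t_1)$, not the reverse. (With $w=-\log u$ the supersolution inequality forces $V$ to be essentially decreasing, so the seminorm must be dominated by $V(t_1)-V(t_2)$, equivalently $\int[\log u(t_2)-\log u(t_1)]\psi^p$, plus an error.) Your next displayed inequality $\tfrac1C E(t)+(p-1)(\int\psi^p)(V(t)-b)\le Cr^n$ is nonetheless written with the correct sign, so the error does not propagate.

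From there the two arguments diverge. The paper absorbs the drift $c\, r^{-sp}(t-t_0)$ into a modified variable $\bar w=v-A_2 r^{-sp}(t-t_1)$, deduces that $\bar W(t)$ solves a first-order differential inequality $\bar W'(t)+\tfrac{|\Omega_t^+(\lambda)|}{A_1 r^{n+ps}}(b+\lambda-\bar W)^p\le 0$, integrates $\partial_t(b+\lambda-\bar W)^{1-p}$ to extract the $\lambda^{1-p}$ decay, and finally does a short union bound to remove the drift. You instead keep the drift, introduce the cumulative seminorm $E(t)=\int_{t_0}^t\mathcal D(s)\,ds$, use the energy estimate to show $b-V(t)\ge c_0 E(t)r^{-n}-C_1$, then run Chebyshev with $E(t)$ in the denominator and substitute $z=E(t)$ so that the $t$-integral collapses to $\int_0^\infty(\lambda/2+c_0 zr^{-n})^{-p}\,dz\sim r^n\lambda^{1-p}$. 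The two mechanisms are equivalent in strength — each exploits that the denominator in the Chebyshev step is coupled, through the Caccioppoli inequality, to the very quantity whose $t$-integral you want to control — but your substitution avoids both the explicit drift absorption and the final union bound, at the cost of explicitly tracking $E(t)$ through the Chebyshev inequality. It is a cleaner closing computation; the paper's version stays closer to the local template of \cite{Kin-Kuusi}. The only point you should flesh out in a polished write-up is the endpoint algebraic inequality (the $\epsilon\to p-1$ limit of Lemma~\ref{Inequality1}), which you only sketch; the paper sidesteps this by citing Di Castro--Kuusi--Palatucci directly.
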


\begin{proof}
Following Lemma 6.1 in \cite{Kin-Kuusi}, we only prove \eqref{logest1} because the proof of \eqref{logest2} is analogous. Without loss of generality, we may assume $x_0=0$. Let $\psi\in C_0^{\infty}(B_{\frac{3r}{2}})$ be a
nonnegative radially decreasing function  such that $0\leq\psi\leq 1$ in $B_\frac{3r}{2}$, $\psi\equiv 1$ \text{ in }$B_r$, $|\nabla\psi|\leq\frac{C}{r}$ in $B_\frac{3r}{2}$.  Since $u$ is a strictly positive supersolution of \eqref{maineqn}, choosing $\phi(x,t)=\psi(x)^p u(x,t)^{1-p}$ as a test function in \eqref{weaksubsupsoln}, we get
\begin{equation}\label{log1}
I_1+I_2+2I_3\geq 0,
\end{equation}
where for any $t_0-r^{ps}\leq t_1<t_2\leq t_0+r^{ps}$, we have
\begin{equation}\label{logI1}
I_1=\int_{t_1}^{t_2}\int_{B_{\frac{3r}{2}}}\frac{\partial}{\partial t}(u(x,t)^{p-1})\phi(x,t)\,dx\, dt
=(p-1)\int_{B_{\frac{3r}{2}}}\log\,u(x,t)\psi(x)^p\,dx\Big|_{t=t_1}^{t_2},
\end{equation}
\[
I_2=\int_{t_1}^{t_2}\int_{B_{\frac{3r}{2}}}\int_{B_{\frac{3r}{2}}}\mathcal{A}(u(x,y,t))(\phi(x,t)-\phi(y,t))\,d\mu\, dt,
\]
and
\[
I_3=\int_{t_1}^{t_2}\int_{\mathbb{R}^n\setminus B_{\frac{3r}{2}}}\int_{B_{\frac{3r}{2}}}\mathcal{A}(u(x,y,t))\phi(x,t)\,d\mu\, dt.
\]
Following the arguments in the proof of \cite[Lemma 1.3]{Kuusilocal}, we obtain for some positive constant $C=C(p)$,
\begin{equation}\label{logI2}
\begin{split}
I_2&=\int_{t_1}^{t_2}\int_{B_{\frac{3r}{2}}}\int_{B_{\frac{3r}{2}}}\mathcal{A}(u(x,y,t))(\phi(x,t)-\phi(y,t))\,d\mu\, dt\\
&\leq -\frac{1}{C}\int_{t_1}^{t_2}\int_{B_{\frac{3r}{2}}}\int_{B_{\frac{3r}{2}}}K(x,y,t)|\log\,u(x,t)-\log\,u(y,t)|^p \psi(y)^p\,dx\, dy\, dt\\
&\qquad+C\int_{t_1}^{t_2}\int_{B_{\frac{3r}{2}}}\int_{B_{\frac{3r}{2}}}K(x,y,t)|\psi(x)-\psi(y)|^p\,dx\, dy\, dt\\
&\leq -\frac{1}{C}\int_{t_1}^{t_2}\int_{B_{\frac{3r}{2}}}\int_{B_{\frac{3r}{2}}}K(x,y,t)|\log\,u(x,t)-\log\,u(y,t)|^p \psi(y)^p\,dx\, dy\, dt\\
&\qquad+C(t_2-t_1)r^{n-sp}, 
\end{split}
\end{equation}
where the last inequality is obtained using the properties of $\psi$. Again following the proof of \cite[Lemma 1.3]{Kuusilocal}, we get that 
\begin{equation}\label{logI3}
I_3=\int_{t_1}^{t_2}\int_{\mathbb{R}^n\setminus B_{\frac{3r}{2}}}\int_{B_{\frac{3r}{2}}}\mathcal{A}(u(x,y,t))\phi(x,t)\,d\mu\, dt
\leq C(t_2-t_1)r^{n-sp}.
\end{equation}

Therefore using the estimates \eqref{logI1}, \eqref{logI2} and \eqref{logI3} into \eqref{log1}, we obtain
\begin{equation}\label{log2}
\begin{gathered}
\frac{1}{C}\int_{t_1}^{t_2}\int_{B_{\frac{3r}{2}}}\int_{B_{\frac{3r}{2}}}K(x,y,t)|\log\,u(x,t)-\log\,u(y,t)|^p \psi(y)^p\,dx\, dy\, dt\\
-(p-1)\int_{B_{\frac{3r}{2}}}\log\,u(x,t)\psi(x)^p\,dx\Big|_{t=t_1}^{t_2}\leq C(t_2-t_1)r^{n-sp}.
\end{gathered}
\end{equation}
Let $v(x,t)=-\log\,u(x,t)$ and
$$
V(t)=\frac{\int_{B_{\frac{3r}{2}}}v(x,t)\psi(x)^p\,dx}{\int_{B_{\frac{3r}{2}}}\psi(x)^{p}\,dx}.
$$
Since $0\leq\psi\leq 1$ in $B_{\frac{3r}{2}}$ and $\psi\equiv 1$ in $B_r$, therefore we have that $\int_{B_{\frac{3r}{2}}}\psi(x)^p\,dx\approx r^n$. Hence dividing by $\int_{B_{\frac{3r}{2}}}\psi(x)^p\,dx$ on both sides of \eqref{log2}, we obtain using the weighted Poincar\'{e} inequality in Lemma \ref{wgtPoincare} that the following holds,
$$
V(t_2)-V(t_1)+\frac{r^{-sp}}{c(p-1)}\int_{t_1}^{t_2}\fint_{B_r}|v(x,t)-V(t)|^p\,dx\, dt\leq\frac{cr^{-sp}}{p-1}(t_2-t_1).
$$
Let $A_1=C(p-1)$, $A_2=\frac{C}{p-1}$, 
\[
\bar{w}(x,t)=v(x,t)-A_2 r^{-sp}(t-t_1)
\quad\text{and}\quad
\bar{W}(t)=V(t)-A_2 r^{-sp}(t-t_1).
\]
Therefore $v(x,t)-V(t)=\bar{w}(x,t)-\bar{W}(t)$.
Hence we get
\begin{equation}\label{monotone1}
\bar{W}(t_2)-\bar{W}(t_1)+\frac{1}{A_1 r^{n+ps}}\int_{t_1}^{t_2}\int_{B_r}|\bar{w}(x,t)-{\bar{W}(t)}|^p\, dx\, dt\leq 0.
\end{equation}
Therefore $\bar{W}(t)$ is a monotone decreasing function in $t_0-r^{ps}\leq t_1<t_2\leq t_0+r^{ps}$. Hence  $\bar{W}(t)$ is differentiable almost everywhere with respect to $t$. 
Dividing by $t_2-t_1$ on both sides of \eqref{monotone1}, we obtain after letting $t_2\to t_1$, 
\begin{equation}\label{notime}
\bar{W}'(t)+\frac{1}{A_1 r^{n+ps}}\int_{B_r}\big|\bar{w}(x,t)-\bar{W}(t)\big|^p\,dx\leq 0.
\end{equation}

Let $t_1=t_0$, then $\bar{W}(t_0)=V(t_0)$ and we denote by $b(u(\cdot,t_0))= \bar{W}(t_0)$. 
Let
$$
\Omega_{t}^{+}(\lambda)=\big\{x\in B_r:\bar{w}(x,t)>b+\lambda\big\}.
$$
Then for any $x\in\Omega_{t}^{+}(\lambda)$ and $t\geq t_0$, since $\bar{W}(t)\leq \bar{W}(t_0)=b$, we have 
\[
\bar{w}(t,x)-\bar{W}(t)\geq b+\lambda-\bar{W}(t)\geq b+\lambda-\bar{W}(t_0)=\lambda>0.
\]
Hence from \eqref{notime}, we have 
$$
\bar{W}'(t)+\frac{|\Omega_{t}^{+}(\lambda)|}{A_1 r^{n+sp}}\big(b+\lambda-\bar{W}(t)\big)^p\leq 0.
$$
Therefore, we have
$$
|\Omega_{t}^{+}(\lambda)|\leq -\frac{A_1 r^{n+sp}}{p-1}\partial_{t}\big(b+\lambda-\bar{W}(t)\big)^{1-p}.
$$
Integrating over $t_0$ to $t_0+r^{sp}$, we obtain
$$
\big|\{(x,t)\in B_r\times(t_0,t_0+r^{sp}):\bar{w}(x,t)>b+\lambda\}\big|\leq-\frac{A_1 r^{n+sp}}{p-1}\int_{t_0}^{t_0+r^{sp}}\partial_{t}\big(b+\lambda-\bar{W}(t)\big)^{1-p}\,dt,
$$
which gives
\begin{equation}\label{logsubpositivetime}
\big|\{(x,t)\in B_r\times(t_0,t_0+r^{sp}):\log\,u(x,t)+A_2 r^{-sp}(t-t_0)<-\lambda-b\}\big|\leq\frac{A_1}{p-1}\frac{r^{n+sp}}{\lambda^{p-1}}.
\end{equation}
Finally we  note that,
\begin{equation}\label{logpostivetime}
\big|\{(x,t)\in B_r\times(t_0,t_0+r^{sp}):\log\,u(x,t)<-\lambda-b\}\big|\leq A+B,
\end{equation}
where 
$$
A=\big|\{(x,t)\in B_r\times(t_0,t_0+r^{sp}):\log\,u(x,t)+A_2 r^{-sp}(t-t_0)<-\tfrac{\lambda}{2}-b\}\big|
\leq \frac{Cr^{n+sp}}{\lambda^{p-1}},
$$
which follows from \eqref{logsubpositivetime} 
and
$$
B=\big|\{(x,t)\in B_r\times(t_0,t_0+r^{sp}):A_2 r^{-sp}(t-t_0)>\tfrac{\lambda}{2}\}\big|
\leq \Big(1-\frac{\lambda}{2A_2}\Big)r^{n+sp}.  
$$
If $\frac{\lambda}{2 A_2}<1$, then 
$$B\leq\Big(1-\frac{\lambda}{2A_2}\Big)r^{n+sp}<r^{n+sp}<\Big(\frac{2 A_2}{\lambda}\Big)^{p-1}r^{n+sp}.$$
If $\frac{\lambda}{2 A_2}\geq 1$, then $B = 0$. Hence in either case we have
$$
B\leq\frac{Cr^{n+ps}}{\lambda^{p-1}}.
$$
Inserting the above estimates of $A$ and $B$ into \eqref{logpostivetime}, we obtain
\begin{equation*}\label{logpostivetimefinal}
\big|\{(x,t)\in B_r\times(t_0,t_0+r^{sp}):\log\,u(x,t)<-\lambda-b\}\big|\leq\frac{Cr^{n+ps}}{\lambda^{p-1}},
\end{equation*}
for some positive constant $C=C(n,p,s,\Lambda)$, which proves \eqref{logest1}. The proof of \eqref{logest2} is analogous.

\end{proof}

\section{Appendix}
In this section, we prove Lemma \ref{Inequality1}.  To this end, we establish the following auxiliary lemmas. Throughout this section, we assume $p>1$. 
\begin{Lemma}\label{auxlemma1}
Let $f,g\in C^1([a,b])$. Then 
\begin{equation*}
\frac{f(b)-f(a)}{b-a}+\Big|\frac{g(b)-g(a)}{b-a}\Big|^p\leq\max_{t\in[a,b]}\big[f'(t)+|g'(t)|^p\big].
\end{equation*}
\end{Lemma}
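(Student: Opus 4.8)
The plan is to reduce both terms on the left-hand side to averages over $[a,b]$ by means of the fundamental theorem of calculus, and then to exploit the convexity of $x\mapsto|x|^p$. Since $f,g\in C^1([a,b])$ we may write
\[
\frac{f(b)-f(a)}{b-a}=\frac{1}{b-a}\int_a^b f'(t)\,dt
\qquad\text{and}\qquad
\frac{g(b)-g(a)}{b-a}=\frac{1}{b-a}\int_a^b g'(t)\,dt.
\]

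The key step is Jensen's inequality: because $p\ge 1$ the function $x\mapsto|x|^p$ is convex on $\mathbb{R}$, so applying it to the probability measure $\frac{1}{b-a}\,dt$ on $[a,b]$ gives
\[
\Big|\frac{g(b)-g(a)}{b-a}\Big|^p=\Big|\frac{1}{b-a}\int_a^b g'(t)\,dt\Big|^p\le\frac{1}{b-a}\int_a^b|g'(t)|^p\,dt.
\]
Adding the identity for $f$ to this inequality yields
\[
\frac{f(b)-f(a)}{b-a}+\Big|\frac{g(b)-g(a)}{b-a}\Big|^p\le\frac{1}{b-a}\int_a^b\big[f'(t)+|g'(t)|^p\big]\,dt,
\]
and since the integrand $t\mapsto f'(t)+|g'(t)|^p$ is continuous on $[a,b]$, its average over $[a,b]$ is bounded by its maximum, which is exactly the claimed bound.

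I do not expect any genuine obstacle here; the argument is essentially just Jensen plus the trivial bound of an average by a maximum. The only points worth flagging are that the hypothesis $p\ge 1$ is precisely what makes $|\cdot|^p$ convex (the statement is false for $p<1$), and that the $C^1$ regularity is used both to represent the difference quotients as averages of the derivatives and to ensure the integrand is continuous so that the average-versus-maximum estimate applies.
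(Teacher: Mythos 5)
Your proof is correct and rests on exactly the same two ingredients as the paper's — the fundamental theorem of calculus plus Jensen's inequality for the convex function $|\cdot|^p$, then the trivial bound of an average by a maximum. The only difference is presentational: the paper runs the argument by contradiction (assume the left-hand side exceeds the pointwise value for every $t$, integrate, and reach a contradiction with Jensen), whereas you argue directly; the mathematical content is identical.
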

\begin{proof}
Suppose the result does not hold, then by contradiction, we get
\begin{equation*}
\frac{f(b)-f(a)}{b-a}+\Big|\frac{g(b)-g(a)}{b-a}\Big|^p>f'(t)+|g'(t)|^p,
\end{equation*} 
for all $t\in[a,b]$. Integrating over $a$ to $b$, we obtain
\begin{equation*}
\Big|\frac{g(b)-g(a)}{b-a}\Big|^p>\frac{1}{b-a}\int_{a}^{b}|g'(t)|^p\,dt,
\end{equation*}
which contradicts Jensen's inequality.
\end{proof}
\begin{Lemma}\label{auxlemma2}
Let $a,b>0$, $0<\epsilon<p-1$. Then we have
\begin{equation*}
|b-a|^{p-2}(b-a)(a^{-\epsilon}-b^{-\epsilon})\geq\zeta(\epsilon)\Big|b^\frac{p-\epsilon-1}{p}-a^\frac{p-\epsilon-1}{p}\Big|^p,
\end{equation*}
where $\zeta(\epsilon)=\frac{p^p\epsilon}{(p-\epsilon-1)^p}$. Moreover, if $0<p-\epsilon-1<1$, then we may choose $\zeta(\epsilon)=\frac{p^p\epsilon}{p-\epsilon-1}$.
\end{Lemma}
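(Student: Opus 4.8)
The plan is to derive Lemma~\ref{auxlemma2} from the one-dimensional comparison in Lemma~\ref{auxlemma1}, choosing the two functions $f,g$ so that the pointwise quantity $f'+|g'|^p$ becomes nonpositive. First I would reduce to the case $0<a<b$: the claimed inequality is invariant under interchanging $a$ and $b$, since both $|b-a|^{p-2}(b-a)(a^{-\epsilon}-b^{-\epsilon})$ and $\big|b^{(p-\epsilon-1)/p}-a^{(p-\epsilon-1)/p}\big|^p$ are symmetric, and it is trivial when $a=b$. So I assume $0<a<b$ and write $\alpha=p-\epsilon-1\in(0,p-1)$, so that $|b-a|^{p-2}(b-a)=(b-a)^{p-1}>0$ and $a^{-\epsilon}-b^{-\epsilon}>0$.

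Next I would apply Lemma~\ref{auxlemma1} on the interval $[a,b]$ with
\[
f(t)=t^{-\epsilon},\qquad g(t)=\zeta(\epsilon)^{1/p}\,t^{\alpha/p},
\]
both of which are $C^1$ on $[a,b]\subset(0,\infty)$. The crucial point is the exponent identity $\alpha-p=-\epsilon-1$, which forces the two relevant powers of $t$ to coincide:
\[
f'(t)+|g'(t)|^p=-\epsilon\,t^{-\epsilon-1}+\zeta(\epsilon)\Big(\tfrac{\alpha}{p}\Big)^p t^{\alpha-p}=\Big(\zeta(\epsilon)\big(\tfrac{\alpha}{p}\big)^p-\epsilon\Big)t^{-\epsilon-1}.
\]
For $\zeta(\epsilon)=\epsilon(p/\alpha)^p$ the bracket vanishes identically; for the alternative choice $\zeta(\epsilon)=\epsilon p^p/\alpha$ in the regime $0<\alpha\le 1$ one has $\zeta(\epsilon)(\alpha/p)^p=\epsilon\,\alpha^{p-1}\le\epsilon$, so the bracket is $\le 0$. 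In either case $\max_{t\in[a,b]}\big(f'(t)+|g'(t)|^p\big)\le 0$, and Lemma~\ref{auxlemma1} yields
\[
\frac{b^{-\epsilon}-a^{-\epsilon}}{b-a}+\zeta(\epsilon)\left|\frac{b^{\alpha/p}-a^{\alpha/p}}{b-a}\right|^p\le 0.
\]

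Finally I would multiply this inequality by $(b-a)^p>0$ and rearrange to obtain $\zeta(\epsilon)\big|b^{\alpha/p}-a^{\alpha/p}\big|^p\le(a^{-\epsilon}-b^{-\epsilon})(b-a)^{p-1}=|b-a|^{p-2}(b-a)(a^{-\epsilon}-b^{-\epsilon})$, which is exactly the claim for $0<a<b$, hence in general by the symmetry observed above. I do not expect a genuine obstacle: once one commits to Lemma~\ref{auxlemma1}, the only substantive choice is that of $g$, made so that $|g'|^p$ carries the same $t$-power as $f'$, after which the value of $\zeta(\epsilon)$ is forced by requiring the bracket above to be $\le 0$; the symmetry reduction and the closing rearrangement are routine bookkeeping. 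The mild point to watch is the sign convention in Lemma~\ref{auxlemma1} — it controls the divided difference of $f$ itself (not its absolute value) plus the $p$-th power of the divided difference of $g$ — so $f$ must be taken \emph{decreasing}, which is what makes the left-hand side of the last display nonpositive.
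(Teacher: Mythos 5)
Your proof is correct and follows essentially the same route as the paper's: both apply Lemma~\ref{auxlemma1} to power functions chosen so that the critical exponents in $f'$ and $|g'|^p$ coincide at $t^{-\epsilon-1}$, forcing $f'+|g'|^p\le 0$ for the stated $\zeta(\epsilon)$. The only cosmetic difference is where you place the constant ($g(t)=\zeta(\epsilon)^{1/p}t^{\alpha/p}$ versus the paper's $f(t)=t^{-\epsilon}/\zeta(\epsilon)$), which is an equivalent rescaling; your direct verification that $\zeta(\epsilon)(\alpha/p)^p\le\epsilon$ for the alternative choice when $0<\alpha<1$ matches the paper's $(p-\epsilon-1)^p\le p-\epsilon-1$ observation.
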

\begin{proof}
Let $0<\epsilon<p-1$ and $\zeta(\epsilon)=\frac{p^p\epsilon}{(p-\epsilon-1)^p}$. Let $f(t)=\frac{t^{-\epsilon}}{\zeta(\epsilon)}$ and $g(t)=t^\frac{p-\epsilon-1}{p}$. By Lemma \ref{auxlemma1}, we have
$$
\frac{1}{\zeta(\epsilon)}\frac{b^{-\epsilon}-a^{-\epsilon}}{b-a}+\Big|\frac{b^\frac{p-\epsilon-1}{p}-a^\frac{p-\epsilon-1}{p}}{b-a}\Big|^p\leq 0.
$$
If $b\geq a$, multiplying by $(b-a)^p$, we obtain
\begin{equation}\label{epsilonless}
(b-a)^{p-1}(a^{-\epsilon}-b^{-\epsilon})\geq\zeta(\epsilon)\big|b^\frac{p-\epsilon-1}{p}-a^\frac{p-\epsilon-1}{p}\big|^p.
\end{equation}
If $b<a$, interchanging $a$ and $b$, the Lemma follows. If $0<p-\epsilon-1<1$, then we have $0<(p-\epsilon-1)^p<p-\epsilon-1$, we have $\zeta(\epsilon)\geq\frac{p^p\epsilon}{p-\epsilon-1}$
and \eqref{epsilonless} implies
\begin{equation*}
(b-a)^{p-1}(a^{-\epsilon}-b^{-\epsilon})\geq\frac{p^p\epsilon}{p-\epsilon-1}\Big|b^\frac{p-\epsilon-1}{p}-a^\frac{p-\epsilon-1}{p}\Big|^p.
\end{equation*}
Hence the claim follows with $\zeta(\epsilon)=\frac{p^p\epsilon}{p-\epsilon-1}$ when $0<p-\epsilon-1<1$.
\end{proof}

\subsection{Proof of Lemma \ref{Inequality1}}
We denote the left-hand and right-hand sides of \eqref{Ineeqn} by L.H.S and R.H.S, respectively.
Let $\zeta_1(\epsilon)=\frac{\zeta(\epsilon)}{c(p)}$ and $\zeta_2(\epsilon)=\zeta(\epsilon)+1+\frac{1}{\epsilon^{p-1}}$. Then $\zeta_1(\epsilon)-\zeta_2(\epsilon)<-1$ since $C(p)>1$ (to be finally chosen appropriately).

Case 1. If $\tau_1=\tau_2=0$, then \eqref{Ineeqn} holds trivially.

Case 2. If $\tau_1>0$ and $\tau_2=0$. 
In this case, we note that if  $b>a$, then
$$
\text{L.H.S}=|b-a|^{p-2}(b-a)(\tau_1^{p}a^{-\epsilon}-\tau_2^{p}b^{-\epsilon})=(b-a)^{p-1}\tau_1^{p}a^{-\epsilon}
$$
and
\begin{align*}
\text{R.H.S}&=\zeta_1(\epsilon)\tau_1^{p}a^{p-\epsilon-1}-\zeta_2(\epsilon)\tau_1^{p}(b^{p-\epsilon-1}+a^{p-\epsilon-1})\\
&=(\zeta_1(\epsilon)-\zeta_2(\epsilon))\tau_1^{p}a^{p-\epsilon-1}-\zeta_2(\epsilon)\tau_1^{p}b^{p-\epsilon-1}.
\end{align*}
Now L.H.S is positive and since $\zeta_1(\epsilon)-\zeta_2(\epsilon)<0$ and $\zeta_2(\epsilon)>0$, the R.H.S is negative. Therefore we have L.H.S $\geq$ R.H.S.
On the other hand if $b\leq a$, then 
\begin{align*}
\text{L.H.S}&=-(a-b)^{p-1}\tau_1^{p}a^{-\epsilon}\geq -\tau_1^{p}a^{p-\epsilon-1},
\end{align*}
and since $\zeta_1(\epsilon)-\zeta_2(\epsilon)<-1$ and $\zeta_2(\epsilon)>0$, we have
$$
\text{R.H.S}=(\zeta_1(\epsilon)-\zeta_2(\epsilon))\tau_1^{p}a^{p-\epsilon-1}-\zeta_2(\epsilon)\tau_1^{p}b^{p-\epsilon-1}
<-\tau_1^{p}a^{p-\epsilon-1}
\leq\text{L.H.S}.
$$

Case 3. If $\tau_1=0$ and $\tau_2>0$. Then we have
$$
\text{L.H.S}=-|b-a|^{p-2}(b-a)\tau_2^{p}b^{-\epsilon},
$$
and 
\begin{align*}
\text{R.H.S}&=(\zeta_1(\epsilon)-\zeta_2(\epsilon))\tau_2^{p}b^{p-\epsilon-1}-\zeta_2(\epsilon)\tau_2^{p}a^{p-\epsilon-1}.
\end{align*}

If $b>a$, then
$$
\text{L.H.S}=-(b-a)^{p-1}\tau_2^{p}b^{-\epsilon}\geq-\tau_2^{p}b^{p-\epsilon-1},
$$
and since $\zeta_1(\epsilon)-\zeta_2(\epsilon)<-1$ and $\zeta_2(\epsilon)>0$, we have 
$$
\text{R.H.S}=(\zeta_1(\epsilon)-\zeta_2(\epsilon))\tau_2^{p}b^{p-\epsilon-1}-\zeta_2(\epsilon)\tau_2^{p}a^{p-\epsilon-1}\\
<-\tau_2^{p}b^{p-\epsilon-1}
\leq\text{L.H.S}.
$$

If $b\leq a$, then the L.H.S is nonnegative and the R.H.S is negative. Therefore we have $\text{L.H.S}\geq\text{R.H.S}$.

Case 4. Let both $\tau_1,\tau_2>0$. By symmetry, we may assume that $b\geq a$. Let $t=\frac{b}{a}\geq 1$, $s=\frac{\tau_2}{\tau_1}>0$ and $\lambda=s^{p}t^{-\epsilon}$. It can be easily seen that the inequality \eqref{Ineeqn} is equivalent to the following inequality
\begin{equation}\label{equivineq}
\zeta_1(\epsilon)|s t^\frac{p-\epsilon-1}{p}-1|^p\leq (t-1)^{p-1}(1-\lambda)+\zeta_2(\epsilon)|s-1|^p(t^{p-\epsilon-1}+1).
\end{equation}
We first estimate the following term.
\begin{align*}
|st^\frac{p-\epsilon-1}{p}-1|^p&=|st^\frac{p-\epsilon-1}{p}-t^\frac{p-\epsilon-1}{p}+t^\frac{p-\epsilon-1}{p}-1|^p\\
&=|(s-1)t^\frac{p-\epsilon-1}{p}+(t^\frac{p-\epsilon-1}{p}-1)|^p\\
&\leq 2^{p-1}|s-1|^p t^{p-\epsilon-1}+2^{p-1}|t^\frac{p-\epsilon-1}{p}-1|^p\\
&=A+B,
\end{align*}
where 
$$
A=2^{p-1}|s-1|^p t^{p-\epsilon-1}\text{ and }B=2^{p-1}|t^\frac{p-\epsilon-1}{p}-1|^p.
$$
By Lemma \ref{auxlemma2}, we have
$$
B\leq \frac{2^{p-1}(t-1)^{p-1}(1-t^{-\epsilon})}{\zeta(\epsilon)}.
$$
As a consequence, we obtain 
$$
|st^\frac{p-\epsilon-1}{p}-1|^p\leq 2^{p-1}|s-1|^p t^{p-\epsilon-1}+\frac{2^{p-1}(t-1)^{p-1}(1-t^{-\epsilon})}{\zeta(\epsilon)}.
$$
We observe that
\begin{align*}
1-t^{-\epsilon}&=1-\lambda+\lambda-t^{-\epsilon}
=1-\lambda+(s^p-1)t^{-\epsilon}\\
&=1-\lambda+|s-1|^p t^{-\epsilon}+(s^p-1-|s-1|^p)t^{-\epsilon}.
\end{align*}
Therefore, we get
\begin{equation}\label{auxestimateineq1}
\begin{split}
&|s t^\frac{p-\epsilon-1}{p}-1|^p
\leq2^{p-1}\big(1+\frac{1}{\zeta(\epsilon)}\big)|s-1|^p t^{p-\epsilon-1}\\
&\qquad+\frac{2^{p-1}}{\zeta(\epsilon)}(t-1)^{p-1}(1-\lambda)+\frac{2^{p-1}}{\zeta(\epsilon)}(t-1)^{p-1}(s^p-1-|s-1|^p)t^{-\epsilon}.
\end{split}
\end{equation}

Next we estimate the term $T=\frac{2^{p-1}}{\zeta(\epsilon)}(t-1)^{p-1}(s^p-1-|s-1|^p)t^{-\epsilon}$ for different values of $t$ and $s$. 

Case (a). If $t>1$ and $s\geq 2$. Then using the fact that $s\geq 2$, it can be easily seen that there exists constant $C(p)$ large enough such that $s^p-1-(s-1)^p\leq C(p)(s-1)^p$. Therefore we get
\begin{equation}\label{case1C}
T\leq\frac{C(p)}{\zeta(\epsilon)}|s-1|^p t^{p-\epsilon-1}.
\end{equation}
By inserting \eqref{case1C} into \eqref{auxestimateineq1}, we get
\begin{equation}\label{auxestimateineq1a}
\begin{gathered}
|s t^\frac{p-\epsilon-1}{p}-1|^p\leq C(p)\big(1+\frac{1}{\zeta(\epsilon)}\big)|s-1|^p t^{p-\epsilon-1}+\frac{C(p)}{\zeta(\epsilon)}(t-1)^{p-1}(1-\lambda).
\end{gathered}
\end{equation}

Case (b). If $t=1$ or $0<s\leq 1$. Then $T\leq 0$. 
Hence we get the estimate in \eqref{auxestimateineq1a}.

Case (c). If $t>1$, $s\in(1,2)$. Let $r\geq p$ be the nearest integer to $p$. Again it follows that there exists a positive constant $C(p)$ large enough such that  $s^p-1-|s-1|^p\leq C(p)|s-1|$.  We have further subcases.

Case (i). If $$t-1<\frac{r 2^{r-1}}{\epsilon}t(s-1).$$  
Note that  we can choose $C(p)$ large enough such that $r 2^{r-1}\leq C(p)$. Hence we have
\begin{equation}\label{case3C}
T\leq \frac{C(p)}{\epsilon^{p-1}\zeta(\epsilon)}t^{p-\epsilon-1}|s-1|^p.
\end{equation}
By inserting \eqref{case3C} into \eqref{auxestimateineq1}, we get
\begin{equation}\label{auxestimateineq1c}
|s t^\frac{p-\epsilon-1}{p}-1|^p
\leq C(p)\Big(1+\frac{1}{\zeta(\epsilon)}\Big(1+\frac{1}{\epsilon^{p-1}}\Big)\Big)|s-1|^p t^{p-\epsilon-1}+\frac{C(p)}{\zeta(\epsilon)}(t-1)^{p-1}(1-\lambda).
\end{equation}

Case (ii). If $$t-1\geq\frac{r 2^{r-1}}{\epsilon}t(s-1).$$
Since $r$ is an integer, we observe that
$$
s^r+s-2=(s-1)(s^{r-1}+s^{r-2}+\cdots+s+2).
$$ 
By the mean value theorem there exists $\eta\in(1,t)$ such that $t^\epsilon-1=\epsilon\eta^{\epsilon-1}(t-1)$ and so $\epsilon=\frac{t^\epsilon-1}{\eta^{\epsilon-1}(t-1)}$.
Now, we have
\begin{align*}
\frac{s^r+s-2}{t-1}
&=\frac{s-1}{t-1}(s^{r-1}+s^{r-2}\cdots+s+2)\\
&\leq\frac{\epsilon}{r 2^{r-1}t}(s^{r-1}+s^{r-2}+\cdots+s+2)\\
&\leq\frac{\epsilon}{t}
\leq\frac{t^\epsilon-1}{t\eta^{\epsilon-1}(t-1)},
\end{align*}
which gives $t\eta^{\epsilon-1}(s^r+s-2)\leq t^\epsilon-1$.

Now, the fact $\epsilon>0$ and $1<\eta<t$ gives $t\eta^{\epsilon-1}>\eta^\epsilon>1$. Therefore since $r\geq p$ and $s>1$, we get $s^p+s-2<s^r+s-2<t\eta^{\epsilon-1}(s^r+s-2)\leq t^\epsilon-1$. Hence we have $s-1\leq t^\epsilon-s^p=t^\epsilon(1-\lambda)$. Thus
\begin{equation}\label{case4C}
T\leq\frac{C(p)}{\zeta(\epsilon)}(t-1)^{p-1}(1-\lambda).
\end{equation}
Using \eqref{case4C} into \eqref{auxestimateineq1} we get
\begin{equation}\label{auxestimateineq1d}
|s t^\frac{p-\epsilon-1}{p}-1|^p\leq C(p)\big(1+\frac{1}{\zeta(\epsilon)}\big)|s-1|^p t^{p-\epsilon-1}+\frac{C(p)}{\zeta(\epsilon)}(t-1)^{p-1}(1-\lambda).
\end{equation}

Finally from the estimates \eqref{auxestimateineq1a}, \eqref{auxestimateineq1c} and \eqref{auxestimateineq1d}, we obtain
\begin{equation}\label{finalestimateineq}
\begin{split}
|st^\frac{p-\epsilon-1}{p}-1|^p\leq C(p)\Big(1+\frac{1}{\zeta(\epsilon)}(1+\frac{1}{\epsilon^{p-1}})\Big)|s-1|^p (t^{p-\epsilon-1}+1)\\+\frac{C(p)}{\zeta(\epsilon)}(t-1)^{p-1}(1-\lambda).
\end{split}
\end{equation}
Multiplying $\frac{\zeta(\epsilon)}{C(p)}$ on both sides of \eqref{finalestimateineq}, we obtain
\begin{equation*}\label{finalestimateineq1}
\frac{\zeta(\epsilon)}{C(p)}|st^\frac{p-\epsilon-1}{p}-1|^p\leq\Big(\zeta(\epsilon)+1+\frac{1}{\epsilon^{p-1}}\Big)|s-1|^p(t^{p-\epsilon-1}+1)+(t-1)^{p-1}(1-\lambda),
\end{equation*}
which corresponds to the inequality \eqref{equivineq}. The lemma  thus follows.

\section*{Acknowledgements}
{A.B. is supported in part by SERB Matrix grant MTR/2018/000267 and by Department of Atomic Energy,  Government of India, under
project no.  12-R \& D-TFR-5.01-0520. P.G. and J.K. are supported by the Academy of Finland.}

\medskip
\noindent
Agnid Banerjee\\
Tata Institute of Fundamental Research\\
Centre For Applicable Mathematics\\
Bangalore-560065, India\\
Email: agnidban@gmail.com\\

\noindent
Prashanta Garain\\
Department of Mathematics\\
P.O. Box 11100\\
FI-00076 Aalto University, Finland\\
Email: pgarain92@gmail.com\\

\noindent
Juha Kinnunen\\
Department of Mathematics\\
P.O. Box 11100\\
FI-00076 Aalto University, Finland\\
Email: juha.k.kinnunen@aalto.fi

\end{document}